\documentclass[11pt]{article}

\usepackage{a4wide,amsfonts,amsmath,latexsym,amsthm,amssymb,euscript,graphicx,units,mathrsfs}
\usepackage{amsmath}
\usepackage{diagbox}
\usepackage{amssymb}
\usepackage{amsthm}
\usepackage{latexsym}
\usepackage{color}
\usepackage{enumerate}
\usepackage{graphicx}
\usepackage{multirow}
\usepackage[T1]{fontenc}
\usepackage[english]{babel}
\usepackage{geometry}
\geometry{hmargin=4cm,vmargin=4cm}

\DeclareSymbolFont{calletters}{OMS}{cmsy}{m}{n}
\DeclareSymbolFontAlphabet{\mathcal}{calletters}
\usepackage{float}
\newfloat{figure}{h}{lof}
\floatname{figure}{\figurename}
%
%

\def\be{\begin{eqnarray}}
\def\Ee{\end{eqnarray}}

\def\b*{\begin{eqnarray*}}
\def\E*{\end{eqnarray*}}

\newcommand{\real}{\mathbb{R}}
\newcommand{\pr}{\mathbb{P}}
\newcommand{\sgn}{\text{sgn}}

%
%
\newtheorem{Theorem}{Theorem}[section]

\newtheorem{Proposition}[Theorem]{Proposition}

\newtheorem{Lemma}[Theorem]{Lemma}
\newtheorem{Corollary}[Theorem]{Corollary}
\newtheorem{Remark}[Theorem]{Remark}
\newtheorem{Example}[Theorem]{Example}

\makeatletter \@addtoreset{equation}{section}

%
%


\newcommand{\No}[1]{\left\|#1\right\|}     
\newcommand{\abs}[1]{\left|#1\right|}     



\addtolength{\oddsidemargin}{-0.1 \textwidth}
\addtolength{\textwidth}{0.2 \textwidth}
\addtolength{\topmargin}{-0.1 \textheight}
\addtolength{\textheight}{0.2 \textheight}
\addtolength{\parindent}{-0.02   \textwidth}


\def \E{\mathbb{E}}
\def \F{\mathbb{F}}
\def \H{\mathbb{H}}

\def \N{\mathbb{N}}
\def \P{\mathbb{P}}

\def \R{\mathbb{R}}


\def\Bc{{\cal B}}
\def\Cc{{\cal C}}

\def\Fc{{\cal F}}








\def\esup{{\rm ess \, sup}}



\def\={\;=\;}
\def\.{\;.}

\def\reff#1{{\rm(\ref{#1})}}


\def\1{{\bf 1}}


 \def\normeL2#1{\left\|{#1}\right\|_{L^2}}

\setlength\parindent{0pt}

\def\E{\mathbb{E}}
\def\P{\mathbb{P}}
\def\h{\mathfrak{H}}
\def\S{\mathbb{S}}
\def\H{\mathbb{H}}


\def\b{\textcolor{blue}}

\author{Thibaut Mastrolia\footnote{Universit\'e Paris-Dauphine, CEREMADE UMR CNRS 7534, Place du Mar\'echal De Lattre De Tassigny, 75775 Paris cedex 16, France, mastrolia@ceremade.dauphine.fr} \and Dylan Possama\"i\footnote{Universit\'e Paris-Dauphine, CEREMADE UMR CNRS 7534, Place du Mar\'echal De Lattre De Tassigny, 75775 Paris cedex 16, France, possamai@ceremade.dauphine.fr} \and Anthony R\'eveillac\footnote{INSA de Toulouse, IMT UMR CNRS 5219, Université de Toulouse, 135 avenue de Rangueil 31077 Toulouse Cedex 4, France, anthony.reveillac@ceremade.dauphine.fr} }

\title{Density analysis of BSDEs}

\begin{document}

\maketitle

\vspace{-2.5em}
\begin{abstract}
In this paper, we study the existence of densities (with respect to the Lebesgue measure) for marginal laws of the solution $(Y,Z)$ to a quadratic growth BSDE. Using the (by now) well-established connection between these equations and their associated semi-linear PDEs, together with the Nourdin-Viens formula, we provide estimates on these densities.  
\end{abstract}

\vspace{0.2em} 
{\noindent \textit{Key words:} BSDEs; Malliavin Calculus; Density analysis; Nourdin-Viens' Formula; PDEs.
}

\vspace{0.2em}
\noindent
{\noindent \textit{AMS 2010 subject classification:} Primary: 60H10; Secondary: 60H07.
\normalsize
}

\vspace{-1em}
\tableofcontents
\section{Introduction}
In recent years the field of Backward Stochastic Differential Equations (BSDEs) has been a subject of growing interest in stochastic calculus, as these equations naturally arise in stochastic control problems in Finance, and as they provide Feynman-Kac type formulae for semi-linear PDEs (\cite{PardouxPeng_92}). Before going further let us recall that a solution to a BSDE is a pair of \textit{regular enough} (in a sense to be made precise) predictable processes $(Y,Z)$ such that
\begin{equation}
\label{eq:BSDEintro}
Y_t=\xi+\int_t^T h(s,Y_s,Z_s)ds -\int_t^T Z_s dW_s, \quad t\in [0,T], 
\end{equation}   
where $W$ is a one-dimensional Brownian motion, $h$ is a predictable process and $\xi$ is a $\mathcal{F}_T$-measurable random variable (with $(\mathcal{F}_t)_{t \in [0,T]}$ the natural completed and right-continuous filtration generated by $W$). Since it is generally not possible to provide an explicit solution to \eqref{eq:BSDEintro}, except for instance when $h$ is a linear mapping of $(y,z)$, one of the main issues, especially regarding the applications is to provide a numerical analysis for the solution of a BSDE. This calls for a deep understanding of the regularity of the solution processes $Y$ and $Z$. The classical regularity related to obtaining a numerical scheme for the solution $(Y,Z)$ is the so-called \textit{path regularity} for the $Z$ component originally studied in \cite{Ma_Zhang_PTRF02}. In this paper, we aim at studying another type of regularity namely, we focus on the law of the marginals of the random variables $Y_t$, $Z_t$ at a given time $t$ in $(0,T)$. More precisely, we are interested in providing sufficient conditions which ensure the existence of a density (with respect to the Lebesgue measure) for these marginals on the one hand, and in deriving some estimates on these densities on the other hand. This type of information on the solution is of theoretical and of practical interest since the description of the tails of the (possible) density of $Z_t$ would provide more accurate estimates on the convergence rates of numerical schemes for quadratic growth BSDEs (qgBSDEs in short), that is when $h$ in \eqref{eq:BSDEintro} has quadratic growth in the $z$-variable, as noted in \cite{DosReisPhD}.

\vspace{0.3em}    
Before reviewing the results available in the literature and the one we derive in this paper, we would like to illustrate with the two following simple examples that the existence and the estimate of densities issues for BSDEs are very different from the one concerning the classical (forward) SDEs. For instance consider the following very particular case of \eqref{eq:BSDEintro} given by:
\begin{equation}
\label{eq:BSDEintrobis}
Y_t=W_1+\int_t^T (s-W_s) ds -\int_t^T Z_s dW_s, \quad t\in [0,1], \; (T=1). 
\end{equation}   
This equation should be extremely simple in the sense that the driver $h$ does not depend on $(Y,Z)$, and indeed it can be solved explicitly to get that:
$$ Y_t = W_t \left(-\frac12 + 2t -\frac{t^2}{2}\right), \quad t\in [0,1]. $$
Hence $Y_t$ is a Gaussian random variable for every time $t$ in $(0,2-\sqrt{3})$, then $Y_{2-\sqrt{3}} = 0$, and for $t$ in $(2-\sqrt{3},1]$, $Y_t$ is Gaussian distributed once again. This illustrates the difficulty of the problem and somehow shows how it is different from the study of forward SDEs. This example, even though it is very simple is pretty insightful and will be studied as Example \ref{exemple} in Section \ref{section:lip}. 
%
Concerning the density estimates, the backward case brings here also, significant differences with the forward case as the following example illustrates. Consider the following equation:
\begin{equation}
\label{eq:BSDEintroter}
Y_t=W_1^3+\int_t^T 3 W_s ds -\int_t^T Z_s dW_s, \quad t\in [0,1], \; (T=1),
\end{equation}   
which can be solved explicitly:
 $$ Y_t = W_t^3 +6 W_t(1-t), \quad Z_t = 3 W_t^2 + 6(1-t), \; t\in [0,1],$$
from which we deduce that both $Y_t$ and $Z_t$ admits a density with respect to the Lebesgue's measure for $t$ in $(0,1]$. However, it is clear that neither the law of $Y_t$ nor the one of $Z_t$ admits Gaussian tails. This example will be considered in Section \ref{section:densY} as Example \ref{rem.toostringent}.

\vspace{0.3em}    
Coming back to the 
general problem of existence of densities for the marginal laws of $Y$ and $Z$, it is worth mentioning that this issue has been pretty few studied in the literature, since up to our knowledge only references \cite{AntonelliKohatsu,AbouraBourguin} address this question. The first results about this problem have been derived in \cite{AntonelliKohatsu}, where the authors provide existence and smoothness properties of densities for the marginals of the $Y$ component only and when the driver $h$ is Lipschitz continuous in $(y,z)$. Note that two kinds of sufficient conditions for the existence of a density for $Y$ are derived in \cite{AntonelliKohatsu}: the so-called \textit{first-order} (cf. \cite[Theorem 3.1]{AntonelliKohatsu}) and \textit{second-order} (see \cite[Theorem 3.6]{AntonelliKohatsu}) conditions. Concerning the $Z$ component, much less is known since existence of a density for $Z$ has been established in \cite{AbouraBourguin} only under the condition that the driver is linear in $z$. This constitutes, to our point of view, a major restriction since up to a Girsanov transformation this case basically reduces to the situation where the driver does not depend on $z$. Nonetheless, in \cite{AbouraBourguin}, estimates on the densities of the laws of $Y_t$ and $Z_t$ are given using the Nourdin-Viens formula.

\vspace{0.3em}
In this paper we revisit and extend the results of \cite{AntonelliKohatsu,AbouraBourguin} by providing sufficient conditions for the existence of densities for the marginal laws of the solution $Y_t,Z_t$ (with $t$ an arbitrary time in $(0,T)$) of a qgBSDE with a terminal condition $\xi$ in \eqref{eq:BSDEintro} given as a deterministic mapping of the value at time $T$ of the solution to a one-dimensional SDE, together with some estimates on these densities. The results concerning the Lipschitz case, \textit{i.e.} when the generator $h$ is Lipschitz, are presented in Section \ref{section:lip}. As recalled above, the case where $h$ is Lipschitz continuous in $(y,z)$ has been investigated in \cite{AntonelliKohatsu} for the $Y$ component only, where the authors have derived two types of sufficient conditions. However, we provide as Example \ref{exemple} a counter-example to \cite[Theorem 3.6]{AntonelliKohatsu} which is devoted to the second-order conditions. This is due to an inefficiency in the proof that can be easily fixed by making a small change in a key quantity in the statement of the result. Hence, we propose a new version of this result as Theorem \ref{AKmodifie}. Then, we gather in Section \ref{section:lip:z} the first existence results of a density for the $Z$ component for Lipschitz BSDEs. Concerning the quadratic case, studied in Section \ref{section:quadratic}, we propose sufficient conditions for the existence of a density first for the $Y$ component of qgBSDEs (in Section \ref{section:quadratic:y}), then for the $Z$ component of qgBSDEs (in Section \ref{section:quadratic:z}). We would like to stress once more at this stage that concerning the existence of a density for the $Y$ component, only the Lipschitz case was known and concerning the control variable $Z$, only the case of linear drivers in $z$ was studied (see \cite[Theorem 4.3]{AbouraBourguin}) up to now, which makes our result a major improvement on the existing literature. Finally, we derive in Section \ref{section:densY}, density estimates for the marginal laws of $Y$ and $Z$ using the Nourdin-Viens formula, and taking advantage of the connection between the solution to a Markovian BSDE and the solution to its associated semi-linear PDE. Note that contrary to \cite{AbouraBourguin}, we do not assume that the Malliavin derivative of $Y$ (or $Z$) to be bounded which is, from our point of view, a too stringent assumption (as illustrated in Example \ref{rem.toostringent}) both from the theoretical and practical point of view. Indeed, such an assumption leads to Gaussian tails for the densities of $Y$ or $Z$. 
However, even in seemingly benign situations, we will see that it is not generally the case for BSDEs, and unlike most of the literature, we have obtained tail estimates which are {\it not} Gaussian. This might be seen as a significant difference between BSDEs and diffusive equations (i.e. with an initial condition) like SDEs or SPDEs for instance \cite{Kohatsu_03,Kohatsu_2003b,Nualart_Quer}.
\vspace{0.5em}

\noindent
Before going further, we would like to explain why our results are quite relevant for financial applications and some stochastic control problems. Most of problems in portfolio management, utility maximization or risk sensitive control (see \textit{e.g.} \cite[Section 4.2]{elk_hamadene_matoussi}) can be essentially reduced to study a qgBSDE. Let us present two examples.
\begin{itemize}
\item[1.] Assume that a financial agent wants to maximize her utility under constraints, \textit{i.e.} her investment strategies are restricted to a specific closed set $C$, it was proved in \cite{elk_rouge} and \cite{HIM} that her optimal strategies are essentially given through the $Z$ component of a qgBSDE of the form
$$Y_t=\xi +\int_t^T h(s,Z_s)ds -\int_t^T Z_s dW_s, \; \forall t\in [0,T]\; \mathbb{P}-a.s.$$
with
$$h(s,z):= -z\theta_s - \frac{|\theta_s|^2}{2\alpha}+\frac\alpha2 \text{dist}_{C}^2\left(z+\frac{\theta_s}{\alpha}\right),$$  where $\alpha$ denotes the risk aversion of the investor and $\theta$ is the market price of risk, and where $\text{dist}_C(x)$ denotes the Euclidean distance between $x$ and $C$. Hence, if one obtains a criterion providing density existence for the $Z$ component solution to a qgBSDE with estimates on its tails, then one gets crucial information to study the behaviors of optimal strategies for utility maximization problems. For example, since $Z$ essentially gives the optimal quantity of money which should be invested in the risky asset, being able to estimate the probability that $Z$ becomes large is particularly meaningful in risk management. Besides, the control of the tails of the density of $Z$ could give important information concerning the rate of convergence for numerical schemes to solve numerically BSDEs, so as to compute optimal strategies (see \cite{ImkellerDosreis, ChassagneuxRichou}). For instance, one can check directly that if $\theta$ above is deterministic, $C$ is smooth (that is its boundary is a $C^2$ Jordan arc), and $\xi=g(W_T)$, where $g$ is any bounded function such that its second-order derivative is non-negative almost everywhere and positive on a set of positive Lebesgue measure (for instance a smoothed butterfly spread), then Theorem \ref{thm_density_z_quadra} below applies and $Z_t$ admits a density for all $t\in(0,T]$.

\item[2.] Assume now that a controller, sensitive to risk, wants to maximize on the control set $ \mathcal{U}$
\begin{equation}\label{RSP} J(u):= \mathbb{E}^u\left[ \exp\left(\theta \int_0^T H(s,X_{\cdot},u_s)ds + g(X_T)\right)\right], \; u \in \mathcal{U},\end{equation}
where $\theta$ denotes the sensitiveness of the controller with respect to risk and $X$ denotes a solution to a classical SDE. This is the classical risk sensitive control problem introduced in \cite{jacobson}. Hence, this risk sensitive control problem can be rewritten in term of the well-known risk entropic measure (see \cite{barrieu_elk} for more details). Then, according to \cite[Theorem 4.3]{elk_hamadene_matoussi}, one can find a maximizer $u^\star$ of \eqref{RSP} which is essentially given by a process $Z^\star$ which is the second component of the solution to the following qgBSDE
$$ Y_t^\star= g(X_T)+\int_t^T h(s,x_{\cdot}, Z_s^\star, u^\star_s)+\frac12 |Z^\star_s|^2ds -\int_t^T Z_s^\star dW_s, \;\forall t\in [0,T], \; \mathbb{P}-a.s.,$$

where $h$ is the Hamiltonian process (which is given explicitly in terms of $H$), which is such that $z\longmapsto h(s,x_{\cdot}, z, u_s)+\frac12 |z|^2$ has a quadratic growth for every $s\in [0,T]$ and $u\in \mathcal{U}$. Again, our results give information on the density of $Z^\star$ and thus on the law of the optimal control which is important for obtaining qualitative properties of this optimal control as well as for numerical approximations.
\end{itemize}
\section{Preliminaries}
\subsection{General notations}

In this paper we fix $T \in (0,\infty)$. Let $W:=(W_t)_{t\in [0,T]}$ be a standard one-dimensional Brownian motion on a probability space $(\Omega,\mathcal{F},\P)$, and we denote by $\mathbb{F}:=(\mathcal{F}_t)_{t\in [0,T]}$ the natural (completed and right-continuous) filtration generated by $W$. We denote by $\lambda$ the Lebesgue measure on $\real$ and we set for any $p\in [1,+\infty]$,  $L^p(\P):=L^p(\Omega,\mathcal{F}_T,\P)$ and denote by $\No{\cdot}_p$ the associated norm.
\vspace{0.3em}
We denote by $\mathcal{C}_b(\real^n)$ ($n \geq 1$) the set of functions from $\real^n$ to $\real$ which are infinitely differentiable with bounded partial derivatives. Similarly, for any $n\geq 1$ and any $p\in\N^*$, we denote by $\mathcal C^p(\R^n)$ the set of functions $f:\R^n\rightarrow \R$ which are $p$-times continuously differentiable. For $f$ in $\mathcal{C}_b(\real^n)$, we set $f_{x_{i_1}\cdots x_{i_n}}$ the $n$-th partial derivative with respect to the variables $x_{i_1},\ldots,x_{i_k}$ with $i_1+\ldots+i_k=n$. For a differentiable mapping $f:\real \longrightarrow \real$, we denote $f'$ its derivative in place of $f_x$. Let us denote, for any $(p,q)\in\N^2$, by $\Cc^{p,q}$ the space of functions $f:[0,T]\times\R\rightarrow \R$ which are $p$-times differentiable in $t$ and $q$-times differentiable in space with partial derivatives continuous (in $(t,x)$).

\vspace{0.3em}
Finally, we introduce the following norms and spaces for any $p\geq 1$. $\mathbb S^p$ is the space of $\mathbb R$-valued, continuous and $\mathbb F$-progressively measurable processes $Y$ s.t.
$$\No{Y}^p_{\mathbb S^p}:=\mathbb E\left[\underset{0\leq t\leq T}{\sup} |Y_t|^p\right]<+\infty.$$

$\mathbb S^\infty$ is the space of $\mathbb R$-valued, continuous and $\F$-progressively measurable processes $Y$ s.t.
$$\No{Y}_{\mathbb S^\infty}:=\underset{0\leq t\leq T}{\sup}\No{Y_t}_\infty<+\infty.$$

$\mathbb H^p$ is the space of $\mathbb R$-valued and $\F$-predictable processes $Z$ such that
$$\No{Z}^p_{\mathbb H^p}:=\mathbb E\left[\left(\int_0^T\abs{Z_t}^2dt\right)^{\frac p2}\right]<+\infty.$$

$\rm{BMO}$ is the space of square integrable, continuous, $\mathbb R$-valued martingales $M$ such that
$$\No{M}_{\rm{BMO}}:=\underset{\tau\in\mathcal T_0^T}{\esup}\No{\mathbb E_\tau\left[\left(M_T-M_{\tau}\right)^2\right]}_{\infty}<+\infty,$$
where for any $t\in[0,T]$, $\mathcal T_t^T$ is the set of $\F$-stopping times taking their values in $[t,T]$. Accordingly, $\mathbb H^2_{\rm{BMO}}$ is the space of $\mathbb R$-valued and $\F$-predictable processes $Z$ such that
$$\No{Z}^2_{\mathbb H^2_{\rm{BMO}}}:=\No{\int_0^.Z_sdB_s}_{\rm{BMO}}<+\infty.$$

\subsection{Elements of Malliavin calculus and density analysis}

In this section we introduce the basic material on the Malliavin calculus that we will use in this paper. Set $\h:=L^2([0,T],\mathcal B([0,T]),\lambda)$, where $\mathcal B([0,T])$ is the Borel $\sigma$-algebra on $[0,T]$, and let us consider the following inner product on $\h$
$$\langle f,g\rangle :=\int_0^Tf(t)g(t)dt, \quad \forall (f,g) \in \h^2,$$
with associated norm $\No{\cdot}_{\h}$. Let $\mathcal{S}$ be the set of cylindrical functionals, that is the set of random variables $F$ in $L^2(\P)$ of the form
\begin{equation}
\label{eq:cylindrical}
F=f(W_{t_1},\ldots,W_{t_n}), \quad (t_1,\ldots,t_n) \in [0,T]^n, \; f \in \mathcal{C}_b(\real^n), \; n\geq 1.
\end{equation}
For any $F$ in $\mathcal{S}$ of the form \eqref{eq:cylindrical}, the Malliavin derivative $D F$ of $F$ is defined as the following $\h$-valued random variable:
\begin{equation}
\label{eq:DF}
D F:=\sum_{i=1}^n f_{x_i}(W_{t_1},\ldots,W_{t_n}) \textbf{1}_{[0,t_i]}.
\end{equation}
It is then customary to identify $DF$ with the stochastic process $(D_t F)_{t\in [0,T]}$. Denote then by $\mathbb{D}^{1,2}$ the closure of $\mathcal{S}$ with respect to the Sobolev norm $\|\cdot\|_{1,2}$, defined as:
$$ \|F\|_{1,2}:=\E\left[|F|^2\right] + \E\left[\int_0^T |D_t F|^2 dt\right]. $$
In an iterative way, one may define $D^n F$ (for $n\geq 1$) as the following $\h^{\odot n}$-valued random variable:
$$ D^nF:=D (D^{n-1} F), $$
where $\h^{\odot n}$ denotes the $n$-times symmetric tensor product of $\h$. We refer to \cite{Nualartbook} for more details.

\vspace{0.3em}
We recall the following criterion for absolute continuity of the law of a random variable $F$ with respect to the Lebesgue measure.

\begin{Theorem}[Bouleau-Hirsch, see e.g. Theorem 2.1.2 in \cite{Nualartbook}]\label{BH}
Let $F$ be in $\mathbb{D}^{1,2}$. Assume that $\|DF\|_{\h} >0$, $\mathbb{P}-$a.s. Then $F$ has a probability distribution which is absolutely continuous with respect to the Lebesgue measure on $\mathbb{R}$.
\end{Theorem}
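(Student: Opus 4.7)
The plan is to verify the standard measure-theoretic criterion for absolute continuity: I would show that $\P(F\in A)=0$ for every Borel set $A\subset\R$ with $\lambda(A)=0$.

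First, I would approximate $\mathbf{1}_A$ from above by smooth functions. Since $\lambda(A)=0$, one can construct open neighborhoods $U_n\supset A$ with $\lambda(U_n)\downarrow 0$, and by mollification a sequence of smooth nonnegative functions $\varphi_n$ satisfying $0\leq \varphi_n\leq 1$, $\varphi_n\geq\mathbf{1}_A$, and $\int_\R\varphi_n(x)\,dx\to 0$. I then introduce their antiderivatives $\Phi_n(x):=\int_{-\infty}^x\varphi_n(y)\,dy$, which are $C^1$, $1$-Lipschitz, and satisfy $\|\Phi_n\|_\infty\leq \int_\R\varphi_n(x)\,dx\to 0$.

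Second, I would apply the chain rule for Lipschitz compositions on $\mathbb{D}^{1,2}$ (Proposition 1.2.4 in \cite{Nualartbook}) to obtain $\Phi_n(F)\in\mathbb{D}^{1,2}$ with $D\Phi_n(F)=\varphi_n(F)\,DF$, so that
$$\langle D\Phi_n(F),DF\rangle_\h=\varphi_n(F)\|DF\|^2_\h.$$
To handle the fact that $\|DF\|_\h$ may be arbitrarily small, I would localize by introducing $B_k:=\{\|DF\|^2_\h\geq 1/k\}$. Since $\|DF\|_\h>0$ $\P$-a.s., $\P(B_k)\uparrow 1$ as $k\to\infty$, and on $B_k$ one has
$$\mathbf{1}_A(F)\mathbf{1}_{B_k}\leq \varphi_n(F)\mathbf{1}_{B_k}\leq k\,\langle D\Phi_n(F),DF\rangle_\h,$$
which upon taking expectations gives $\P(F\in A,\,B_k)\leq k\,\E[\langle D\Phi_n(F),DF\rangle_\h]$.

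The main obstacle will be to show that the right-hand side tends to zero along some subsequence. The estimate $\|D\Phi_n(F)\|_\h\leq \|DF\|_\h$ (which follows from $0\leq \varphi_n\leq 1$) yields that $(\Phi_n(F))_n$ is bounded in the Hilbert space $\mathbb{D}^{1,2}$, hence admits a weakly convergent subsequence. Because $\|\Phi_n\|_\infty\to 0$, we have $\Phi_n(F)\to 0$ strongly in $L^2(\P)$, and the closability of the Malliavin derivative $D$ forces the weak limit of $D\Phi_n(F)$ in $L^2(\Omega;\h)$ to be $0$. Pairing with the fixed element $DF\in L^2(\Omega;\h)$ then yields $\E[\langle D\Phi_n(F),DF\rangle_\h]\to 0$ along the subsequence, whence $\P(F\in A,B_k)=0$ for every $k$; letting $k\to\infty$ and invoking $\P(\cup_k B_k)=1$ concludes. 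The delicate point is thus the combination of weak $\mathbb{D}^{1,2}$-compactness with the closability of $D$ used to identify the weak limit as zero.
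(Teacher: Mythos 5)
The paper offers no proof of this statement (it is quoted from Theorem 2.1.2 of Nualart's book), so your proposal can only be measured against the classical Bouleau--Hirsch argument. Your Steps 2--4 --- the chain rule, the localization on $B_k:=\{\No{DF}^2_{\h}\geq 1/k\}$, and the combination of weak compactness in $\mathbb{D}^{1,2}$ with the closedness of $D$ to show $\E[\langle D\Phi_n(F),DF\rangle_{\h}]\to 0$ along a subsequence --- are correct and are exactly the heart of the standard proof. The genuine gap is in your very first step: for a general Borel set $A$ with $\lambda(A)=0$ there is \emph{no} sequence of continuous (let alone smooth) functions $\varphi_n$ with $\varphi_n\geq \mathbf{1}_A$, $0\leq\varphi_n\leq 1$ and $\int_\R\varphi_n(x)\,dx\to 0$. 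Take $A=\mathbb{Q}\cap[0,1]$: any continuous $\varphi\geq \mathbf{1}_A$ satisfies $\varphi\geq 1$ on $\overline{A}=[0,1]$ by density, hence $\int_\R\varphi(x)\,dx\geq 1$. Mollifying $\mathbf{1}_{U_n}$ does produce smooth functions with small integral, but no single one of them dominates $\mathbf{1}_{U_n}$ (hence $\mathbf{1}_A$) pointwise, so the inequality $\mathbf{1}_A(F)\mathbf{1}_{B_k}\leq k\langle D\Phi_n(F),DF\rangle_{\h}$ on which everything rests is not established.

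The standard repair keeps your architecture but replaces the smooth majorant by a liminf over the mollification parameter. Work with the $1$-Lipschitz, non-$C^1$ primitive $\Phi_n(x):=\int_{-\infty}^x\mathbf{1}_{U_n}(y)\,dy$ and apply the Lipschitz chain rule (Proposition 1.2.4 in Nualart's book): $\Phi_n(F)\in\mathbb{D}^{1,2}$ and $D\Phi_n(F)=G_n\,DF$ for some random variable $G_n$ with $0\leq G_n\leq 1$, obtained as an a.s.\ limit along a subsequence of $(\mathbf{1}_{U_n}*\rho_{\epsilon_j})(F)$. Since $U_n$ is \emph{open}, for every $x\in U_n$ a full neighbourhood of $x$ lies in $U_n$, so $(\mathbf{1}_{U_n}*\rho_{\epsilon})(x)=1$ for $\epsilon$ small; hence $\liminf_{j}(\mathbf{1}_{U_n}*\rho_{\epsilon_j})\geq \mathbf{1}_{U_n}$ pointwise and therefore $G_n\geq \mathbf{1}_{U_n}(F)\geq \mathbf{1}_A(F)$ a.s. With $\varphi_n(F)$ replaced by $G_n$, your Steps 2--4 go through verbatim and yield $\P(F\in A,\,B_k)=0$ for every $k$. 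Incidentally, the step you single out as delicate (weak compactness plus closability) is routine; the real subtlety of this theorem is precisely the approximation of $\mathbf{1}_A$ that your proposal glosses over.
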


Let $F$ such that $\|DF\|_{\h} >0$, $\mathbb{P}-$a.s., then the previous criterion implies that $F$ admits a density $\rho_F$ with respect to the Lebesgue measure. Assume there exists in addition a measurable mapping $\Phi_F$ with $\Phi_F : \mathbb{R}^{\h}  \rightarrow \h$, such that $DF=\Phi_F(W)$, then we set:
\begin{equation}\label{gzt}
g_F(x):=\int_0^\infty e^{-u} \mathbb{E}\left[\mathbb{E}^*[\langle \Phi_F(W),\widetilde{\Phi_F^u}(W)\rangle_{\h}] \Big{|} F-\mathbb{E}(F)=x\right] du, \ x\in \real,
\end{equation} 
where $\widetilde{\Phi_F^u}(W):=\Phi_F(e^{-u}W+\sqrt{1-e^{-2u}}W^*)$ with $W^*$ an independent copy of $W$ defined on a probability space $(\Omega^*,\mathcal{F}^*,\mathbb{P}^*)$, and $\mathbb{E}^*$ denotes the expectation under $\mathbb{P}^*$ ($\Phi_F$ being extended on $\Omega\times \Omega^*$). We recall the following result from \cite{NourdinViens}.

\begin{Theorem}[Nourdin-Viens' formula]\label{thm_NourdinViens}
$F$ has a density $\rho$ with the respect to the Lebesgue measure if and only if the random variable $g_F(F-\mathbb{E}[F])$ is positive a.s.. In this case, the support of $\rho$, denoted by $\text{supp}(\rho)$, is a closed interval of $\mathbb{R}$ and for all $x \in \text{supp}(\rho)$:
\begin{equation*}
\rho(x)=\frac{\mathbb{E}(|F-\mathbb{E}[F]|)}{2g_{F}(x-\E[F])}\exp{\left( -\int_0^{x-\mathbb{E}[F]} \frac{udu}{g_F(u)} \right)}.
\end{equation*}
\end{Theorem}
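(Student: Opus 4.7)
The plan is to combine the Mehler representation of the Ornstein-Uhlenbeck pseudo-inverse $-L^{-1}$ (where $L = -\delta D$, with $\delta$ the divergence dual to $D$) with the fundamental Malliavin integration-by-parts identity, which translates the problem into a first-order linear ODE for the candidate density. Without loss of generality I would assume $\mathbb{E}[F] = 0$, so that $F$ lies in the domain of $L^{-1}$, and work with the centered variable throughout.

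First I would show that $g_F$ as defined in \eqref{gzt} is precisely a version of the conditional expectation $\mathbb{E}[\langle DF, -DL^{-1}F\rangle_\h \mid F]$. Using the spectral identity $-L^{-1} = \int_0^\infty P_u \, du$ valid on mean-zero variables, where $(P_u)_{u\ge 0}$ denotes the Ornstein-Uhlenbeck semigroup, together with the Mehler formula
$$P_u \Phi(W) = \mathbb{E}^*\left[\Phi\left(e^{-u}W + \sqrt{1-e^{-2u}}\,W^*\right)\right],$$
applied to $\Phi = \Phi_F$, one obtains $-DL^{-1}F = \int_0^\infty e^{-u}\, \mathbb{E}^*[\widetilde{\Phi_F^u}(W)]\, du$. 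Pairing with $DF = \Phi_F(W)$ in $\h$ and conditioning on $F$ then yields the announced identification of $g_F(F)$ almost surely.

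Next, I would invoke the fundamental integration-by-parts identity: for any $\phi \in \mathcal{C}_b^1(\mathbb{R})$, writing $F = \delta(-DL^{-1}F)$ and using the duality between $D$ and $\delta$,
$$\mathbb{E}[\phi(F)\,F] = \mathbb{E}\left[\phi'(F)\,\langle DF, -DL^{-1}F\rangle_\h\right] = \mathbb{E}[\phi'(F)\, g_F(F)].$$
Under the assumption $g_F(F) > 0$ almost surely, this identity implies, via a classical Bouleau-Hirsch-type argument, that the law of $F$ is absolutely continuous with respect to Lebesgue measure; let $\rho$ denote its density. Rewriting both sides against $\rho$ and performing distributional integration by parts then forces the first-order equation $(g_F \rho)'(x) = -x\, \rho(x)$ on the interior of the support, where $g_F(x)$ is understood as the conditional expectation above viewed as a Borel function of $x$.

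Finally, I would solve this linear first-order ODE on the support, which gives
$$g_F(x)\, \rho(x) = g_F(0)\, \rho(0)\, \exp\left(-\int_0^x \frac{u\, du}{g_F(u)}\right),$$
and pin down the constant by integrating $-(g_F\rho)'(y) = y\,\rho(y)$ from $0$ to $+\infty$: combined with the vanishing of $g_F\rho$ at the endpoints of the support, this yields $g_F(0)\,\rho(0) = \tfrac12 \mathbb{E}[|F|]$, which is exactly the claimed formula. The explicit expression shows that $\rho > 0$ throughout the interior, which both forces the support to be a closed interval and establishes the converse implication by reading the chain of equivalences backwards. The main obstacle is making the passage from the integration-by-parts identity to the distributional ODE fully rigorous: one must enlarge the test-function class beyond $\mathcal{C}_b^1$, control the boundary behavior of $g_F\rho$ on the support, and verify that no mass escapes at infinity. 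This is precisely where the hypothesis $F \in \mathbb{D}^{1,2}$, combined with the closed-form Mehler expression for $g_F$ (which allows bootstrapping regularity and integrability), plays its essential role.
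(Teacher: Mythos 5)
The paper does not prove this statement; it is quoted directly from \cite{NourdinViens}, so there is no internal proof to compare against. Your proposal is a faithful reconstruction of the argument in that reference---identifying $g_F(F-\mathbb{E}[F])$ with $\mathbb{E}\left[\langle DF,-DL^{-1}(F-\mathbb{E}[F])\rangle_{\h}\,|\,F\right]$ via Mehler's formula, using the integration-by-parts identity $\mathbb{E}[(F-\mathbb{E}[F])\phi(F)]=\mathbb{E}[\phi'(F)\,g_F(F-\mathbb{E}[F])]$, and deriving the relation $g_F(x)\rho(x+\mathbb{E}[F])=\int_x^{\infty}y\,\rho(y+\mathbb{E}[F])\,dy$ which gives the density formula, the constant $\tfrac12\mathbb{E}[|F-\mathbb{E}[F]|]$, the interval structure of the support, and the converse implication---so it takes essentially the same route as the original source.
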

\subsection{The FBSDE under consideration}
\label{sub:X}

In this paper, we consider a FBSDE of the form: 
\begin{equation}\label{edsr}
\begin{cases}
\displaystyle X_t= X_0+\int_0^t b(s,X_s) ds +\int_0^t \sigma(s,X_s)dW_s,\ t\in[0,T],\ \P-a.s.\\
\displaystyle Y_t = g(X_T) +\int_t^T h (s,X_s,Y_s,Z_s) ds -\int_t^T Z_s dW_s, \ t\in [0,T],\ \P-a.s.,
\end{cases} 
\end{equation} with $X_0$ a given real constant. We denote by $\mathfrak S(X_t)$ the support of the law of $X_t$ under $\mathbb P$, that is to say the smallest closed subset $A$ of $\real$ such that $\mathbb{P}(X_t\in A)=1$. 
Throughout this paper we will make the following standing assumption on the process $X$ in \eqref{edsr}.

\vspace{0.3em}
\textbf{Standing  assumptions on $X$:} 
\begin{itemize}
\item[(X)] 
$b,\sigma : [0,T]\times \real \longrightarrow \real$ are continuous in time and continuously differentiable in space for any fixed time $t$ and such that there exist $k_b,k_\sigma >0$ with 
$$|b_x(t,x)|\leq k_b,\ |\sigma_x(t,x)|\leq k_\sigma, \text{ for all $x\in\R$}.$$ 
Besides $b(t,0), \sigma(t,0)$ are bounded functions of $t$ and there exists $c>0$ such that for all $t\in [0,T]$ $$0<c\leq |\sigma(t,\cdot)|, \ \lambda(dx)-a.e.$$
\end{itemize}

\begin{Remark}\label{densite_x}According to Theorem 2.1 in \cite{FournierPrintems}, $(X)$ implies that for all $t\in(0,T]$, the law of $X_t$, denoted by $\mathcal{L}(X_t)$, has a density with respect to the Lebesgue measure.
\end{Remark}

Our results will obviously need conditions on the parameters $g$, $h$ which appear in the backward component of \eqref{edsr}. More precisely, one can distinguish between two regimes which call for two different analyses: the case where $h$ exhibits Lipschitz growth in its variables (developed in Section \ref{section:lip}), and the case where $h$ has quadratic growth in the $z$ variable (studied in Section \ref{section:quadratic}). We start with the Lipschitz situation.

\section{The Lipschitz case}
\label{section:lip}

In this section, we focus on the solution $(Y,Z)$ of FBSDE \eqref{edsr} under a Lipschitz-type assumption on the driver $h$. The problem of existence of a density for the marginal laws of $Y$ has been first studied in \cite{AntonelliKohatsu}, when the generator $h$ is assumed to be uniformly Lipschitz continuous in $y$ and $z$. We first recall in Section \ref{sub:lipprel} some general results on Lipschitz FBSDEs, then we review in Section \ref{section:lip:y} the results from \cite{AntonelliKohatsu}. Next, we point out an inefficiency in \cite[Theorem 3.6]{AntonelliKohatsu} by providing a counter example to this result, and we make precise how this small flaw can be corrected, and propose a precised version of it as Theorem \ref{AKmodifie}. Finally, in Section \ref{section:lip:z}, we study the existence of a density for the marginal laws of $Z$ when the generator $h$ of the BSDE satisfies Assumption $($L$)$. 

\subsection{Generalities on Lipschitz FBSDEs}
\label{sub:lipprel}

We start by making precise as Assumption $($L$)$ the Lipschitz condition on $h$ and the associated condition on the terminal condition $g$. We set:

\begin{itemize}
\item[(L)] 
\begin{itemize}
\item[(i)] $g : \mathbb{R} \longrightarrow \mathbb{R}$ is such that $\E[g(X_T)^2]<+\infty$.
\item[(ii)]  $h : [0,T]\times \mathbb{R}^3 \longrightarrow \mathbb{R}$ is such that there exist $(k_x,k_y,k_z)\in(\R_+^*)^3$ such that for all $(t,x_1,x_2,y_1,y_2,z_1,z_2) \in [0,T]\times \mathbb{R}^6$, 
$$ |h(t,x_1,y_1,z_1)-h(t,x_2,y_2,z_2)|\leq k_x|x_1-x_2|+k_y|y_1-y_2|+k_z|z_1-z_2|.$$
\item[(iii)] $\int_0^T |h(s,0,0,0)|^2ds<+\infty$.
\end{itemize}
\end{itemize}

Before going to the density analysis of the $Y$ and $Z$ components we recall briefly well-known facts about existence, uniqueness and Malliavin differentiability for the system \eqref{edsr} which can be found in \cite{PardouxPeng,ElkarouiPengQuenez}.

\begin{Proposition}[\cite{PardouxPeng,ElkarouiPengQuenez}]$($Existence and uniqueness$)$\label{propex}
Under Assumptions $(X)$ $($that we recall is given in Section \ref{sub:X}$)$ and $(L)$, there exists a unique solution $(X,Y,Z)$ in $\S^2 \times \S^2 \times \H^2$ to the FBSDE \eqref{edsr}.
\end{Proposition}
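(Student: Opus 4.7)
The plan is to decouple the system and invoke two classical results separately, so essentially nothing new is needed. I would first establish existence and uniqueness for the forward equation, then view the backward equation as a standard Lipschitz BSDE with random coefficients in which $X$ enters only parametrically.

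For the forward component, Assumption $(X)$ yields $|b_x(t,x)|\leq k_b$ and $|\sigma_x(t,x)|\leq k_\sigma$ for all $(t,x)$, so $b(t,\cdot)$ and $\sigma(t,\cdot)$ are globally Lipschitz in space with constants uniform in $t$. Combined with boundedness of $b(t,0)$ and $\sigma(t,0)$, the coefficients satisfy the usual linear growth condition. A standard Picard iteration together with the Burkholder--Davis--Gundy inequality and Gronwall's lemma then produces a unique strong solution $X\in\mathbb S^2$. The $\mathbb S^2$ estimate of the form $\E[\sup_{0\le t\le T}|X_t|^2]\le C(1+|X_0|^2)$ is what will feed the backward step.

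For the backward component, I would freeze the trajectory of $X$ and set $\xi:=g(X_T)$ together with $\tilde h(t,\omega,y,z):=h(t,X_t(\omega),y,z)$. By $(L)(i)$ we have $\xi\in L^2(\P)$. The map $\tilde h$ inherits from $(L)(ii)$ the Lipschitz property in $(y,z)$ with constants $(k_y,k_z)$, and the integrability condition $\E\!\left[\int_0^T|\tilde h(s,0,0)|^2ds\right]<+\infty$ follows from $(L)(ii)$--$(iii)$ via
\[
|\tilde h(s,0,0)|\le |h(s,0,0,0)|+k_x|X_s|,
\]
combined with $(L)(iii)$ and the $\mathbb S^2$ bound on $X$. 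These are exactly the hypotheses of the Pardoux--Peng theorem, which delivers a unique pair $(Y,Z)\in\mathbb S^2\times\mathbb H^2$ solving the backward equation.

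There is no real obstacle here: the only small point to be careful with is the integrability of $\tilde h(\cdot,0,0)$ after freezing $X$, which requires the Lipschitz property of $h$ in its $x$-variable from $(L)(ii)$ in order to trade $h(s,X_s,0,0)$ against $h(s,0,0,0)$ plus a contribution controlled by $\|X\|_{\mathbb S^2}$. Once that check is done, uniqueness of $(X,Y,Z)$ follows by combining strong uniqueness for $X$ with uniqueness in the Pardoux--Peng theorem applied pathwise.
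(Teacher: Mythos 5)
Your argument is correct and is exactly the standard decoupling route: the paper offers no proof of Proposition \ref{propex}, simply citing \cite{PardouxPeng,ElkarouiPengQuenez}, and your two-step scheme (strong well-posedness of the Lipschitz SDE in $\mathbb S^2$, then the Pardoux--Peng theorem applied to the frozen driver $\tilde h(t,\omega,y,z)=h(t,X_t(\omega),y,z)$ with $\xi=g(X_T)\in L^2(\P)$) is precisely the argument underlying those references. The point you flag about using the $k_x$-Lipschitz bound to verify $\E[\int_0^T|\tilde h(s,0,0)|^2ds]<+\infty$ is the right and only nontrivial check.
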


Concerning the Malliavin differentiability of $(X,Y,Z)$, it can obtained (see \cite{PardouxPeng} and \cite[Remark of Proposition 5.3]{ElkarouiPengQuenez}) under the following assumptions:

\begin{itemize}
\item[(D1)] 
\begin{itemize}
\item[(i)] $g$ is differentiable, $\mathcal{L}(X_T)-$a.e., $g$ and $g'$ have polynomial growth.
\item[(ii)] $(x,y,z)\mapsto h(t,x,y,z)$ is continuously differentiable for every $t$ in $[0,T]$.
\end{itemize}
\item[(D2)] 
\begin{itemize}
\item[(i)] $g$ is twice differentiable, $\mathcal{L}(X_T)-$a.e., $g$, $g'$ and $g''$ have polynomial growth. 

\item[(ii)] $(x,y,z)\mapsto h(t,x,y,z)$ is twice continuously differentiable for every $t$ in $[0,T]$.
\end{itemize}
\end{itemize}

Note that (D1) ensures that $Y$ is Malliavin differentiable, whereas $(D2)$ ensures it is twice Malliavin differentiable. As it will be made more clear below, since $Z$ can be represented as a Malliavin trace of $Y$, the fact that $Y$ is twice Malliavin differentiable entails that $Z$ is Malliavin differentiable.

\begin{Proposition}$($Malliavin differentiabiliy$)$ \label{MD}
Under $(X)$, $(L)$ and $(D1)$, we have for any $t\in[0,T]$ that $(X_t,Y_t) \in (\mathbb{D}^{1,2})^2$, $Z_t \in \mathbb{D}^{1,2}$ for almost every $t$, and for all $0<r\leq t \leq T$:
\begin{equation}\label{edsr_derive}
\begin{cases}
\displaystyle D_r X_t=\sigma(r,X_r) + \int_r^t b_x(s,X_s) D_r X_s ds + \int_r^t \sigma_x(s,X_s) D_r X_s dW_s\\
\displaystyle D_r Y_t=g'(X_T)D_rX_T+\int_t^T H(s,D_r X_s,D_r Y_s, D_r Z_s)ds -\int_t^T D_r Z_s dW_s,
\end{cases}
\end{equation} 
where $H(s,x,y,z):=h_x(s,X_s,Y_s,Z_s)x+h_y(s,X_s,Y_s,Z_s)y+h_z(s,X_s,Y_s,Z_s)z.$  
\end{Proposition}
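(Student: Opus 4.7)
The proposition is essentially a synthesis of classical results in \cite{PardouxPeng,ElkarouiPengQuenez}; my plan is to reprove it via a Picard iteration scheme tracking Malliavin derivatives at each step. First, the Malliavin differentiability of $X_t$ and the forward linear SDE for $D_r X_t$ follow from standard results for Lipschitz SDEs (e.g.\ Nualart, Theorem 2.2.1), using that $b$ and $\sigma$ are continuously differentiable with bounded derivatives by $(X)$; in particular $D_r X_t$ lies in $L^p(\Omega)$ for every $p\ge 1$ and is continuous in $r$ up to a null set.

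Next, I would construct the BSDE solution by Picard iteration: set $(Y^0,Z^0)=(0,0)$ and define $(Y^{n+1},Z^{n+1})\in\S^2\times\H^2$ as the unique solution of the BSDE with linear driver
\[
Y^{n+1}_t = g(X_T) + \int_t^T h(s,X_s,Y^n_s,Z^n_s)\,ds -\int_t^T Z^{n+1}_s dW_s.
\]
Under $(D1)$ the driver $h$ and the terminal $g$ have polynomially growing derivatives, so a careful induction shows $(Y^n_t,Z^n_t)\in(\D^{1,2})^2$ and its Malliavin derivative $(D_r Y^{n+1},D_r Z^{n+1})$ solves the linear BSDE
\[
D_r Y^{n+1}_t = g'(X_T)D_r X_T + \int_t^T\! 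H^n(s,D_r X_s,D_r Y^n_s,D_r Z^n_s)\,ds -\int_t^T\! D_r Z^{n+1}_s dW_s,
\]
where $H^n$ uses the derivatives of $h$ evaluated at $(X_s,Y^n_s,Z^n_s)$. A Gronwall-type argument, combined with the uniform Lipschitz bounds from $(L)$ on $(h_x,h_y,h_z)$ and standard $\mathbb{H}^2$ estimates for linear BSDEs, yields uniform-in-$n$ bounds
\[
\sup_n \E\!\left[\int_0^T\!\!\int_0^T\!\bigl(|D_r Y^n_t|^2 + |D_r Z^n_t|^2\bigr)\,dr\,dt\right]<+\infty.
\]

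The third step is to pass to the limit. Standard contraction arguments already give $(Y^n,Z^n)\to(Y,Z)$ in $\S^2\times \H^2$; combined with the uniform $\mathbb{D}^{1,2}$-bound, the closability of the operator $D$ (Nualart, Lemma 1.2.3) yields $Y_t\in\D^{1,2}$ and that $(D_r Y,D_r Z)$ satisfies the claimed linear BSDE \eqref{edsr_derive}. Then, for $Z$: the right-hand side of \eqref{edsr_derive} evaluated at the formal diagonal $r=t$, together with the continuity in $r$ of $D_r Y_t$ on $\{r\le t\}$, identifies a version of $(D_t Y_t)_{t\in[0,T]}$ with the process $Z$ via the Malliavin trace formula (Proposition 5.3 in \cite{ElkarouiPengQuenez}). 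This gives $Z_t\in\mathbb{D}^{1,2}$ for almost every $t$ and the second equation in \eqref{edsr_derive}.

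The main technical obstacle I anticipate is the identification $Z_t=D_t Y_t$ and the measurability/integrability needed to make this Malliavin-trace argument rigorous: one must show that $(r,t)\mapsto D_r Y_t$ admits a jointly measurable version which is continuous (or at least right-continuous) in $r$ at $r=t^-$, so that specializing the integral representation to the diagonal is well-defined. All other steps reduce to routine Gronwall estimates on the Picard sequence, the only delicate point being to carry the integrability of $D X$ through the chain-rule application to $g'(X_T)D_r X_T$, which requires the polynomial growth of $g'$ and $L^p$ control of $D_r X_T$ for some $p>2$, both guaranteed by assumptions $(X)$ and $(D1)(i)$.
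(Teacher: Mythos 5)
Your Picard-iteration sketch is correct and is essentially the standard argument of the references the paper itself invokes: the paper states Proposition \ref{MD} without proof, citing \cite{PardouxPeng} and \cite[Proposition 5.3]{ElkarouiPengQuenez}, whose proofs proceed exactly as you describe (Malliavin differentiation of the Picard scheme, uniform $\mathbb{D}^{1,2}$-bounds, closability of $D$, and the trace identification $Z_t=D_tY_t$). The only point you gloss over is that under $(D1)$ the map $g$ is only differentiable $\mathcal{L}(X_T)$-a.e., so the chain rule for $g(X_T)$ requires the approximation argument the paper supplies in the remark following Lemma \ref{lemma_gradient_malliavin}, using that the law of $X_T$ is absolutely continuous.
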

Notice that BSDE \eqref{edsr_derive} is a linear BSDE, whose solution can be computed using the linearization method (see \cite{ElkarouiPengQuenez}).

\vspace{0.5em}
We will need extra properties on the Malliavin derivative of $Y$ and $Z$ for which the following result will be crucial. These results rely heavily on the Markovian framework we are working with.
\begin{Proposition}[\cite{MaZhang,IRR}]\label{prop:Markov}
Let Assumptions $(X)$, $(L)$ and $(D1)$ hold, then there exists a map $u:[0,T] \times \real \longrightarrow \real$ in $\Cc^{1,2}$ such that 
$$Y_t =u(t,X_t), \quad t\in [0,T], \; \P-a.s.$$
In addition, $Z$ admits a continuous version given by
\begin{equation}
\label{eq:Zu'}
Z_t = u_x(t,X_t) \sigma(t,X_t), \quad t\in [0,T], \; \P-a.s.
\end{equation}
\end{Proposition}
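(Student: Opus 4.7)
The plan is to construct $u$ via the flow of the FBSDE, establish the representation $Y_t=u(t,X_t)$ pathwise, upgrade the regularity to $\Cc^{1,2}$ through the associated semilinear PDE, and identify $Z$ by comparing the It\^o decomposition of $u(\cdot,X_\cdot)$ with the BSDE dynamics.

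First, I would parametrize \eqref{edsr} by its initial data: for $(t,x)\in[0,T]\times\real$, let $(X^{t,x},Y^{t,x},Z^{t,x})$ denote the unique solution on $[t,T]$ with $X^{t,x}_t=x$, well-posedness being guaranteed by Proposition \ref{propex}. Since $Y^{t,x}_t$ is measurable with respect to the trivial $\sigma$-algebra generated by Brownian increments on $[t,t]$, it is deterministic; set $u(t,x):=Y^{t,x}_t$. The flow property of the forward SDE combined with the uniqueness of the backward equation yields, for every $s\leq t$ and $\P$-a.s., $X^{s,X_s}_t=X_t$ and $Y^{s,X_s}_t=Y_t$, so that the Markov property gives $Y_t=u(t,X_t)$.

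Second, I would argue that $u\in\Cc^{1,2}$. Under (X) and (D1), classical differentiability-in-parameter results for SDEs and for Lipschitz BSDEs imply that $x\mapsto Y^{t,x}$ is differentiable, with derivative solving a linear BSDE analogous to \eqref{edsr_derive}; this yields $u(t,\cdot)\in\Cc^1$ with $u_x(t,x)=\partial_x Y^{t,x}_t$. To upgrade to $\Cc^{1,2}$, I would invoke the PDE connection: $u$ is a viscosity solution of the semilinear parabolic equation
\begin{equation*}
u_t(t,x)+b(t,x)u_x(t,x)+\tfrac{1}{2}\sigma(t,x)^2 u_{xx}(t,x)+h\bigl(t,x,u(t,x),\sigma(t,x)u_x(t,x)\bigr)=0,\quad u(T,x)=g(x).
\end{equation*}
Combined with the uniform non-degeneracy $\sigma\geq c>0$ from (X) and the smoothness of the coefficients in (D1), standard parabolic regularity theory (precisely the content of the Ma--Zhang and IRR references) promotes this viscosity solution to a classical $\Cc^{1,2}$ one.

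Third, I would apply It\^o's formula to $u(t,X_t)$:
\begin{equation*}
du(t,X_t)=\Bigl(u_t+b\,u_x+\tfrac{1}{2}\sigma^2 u_{xx}\Bigr)(t,X_t)\,dt+\sigma(t,X_t)u_x(t,X_t)\,dW_t.
\end{equation*}
Substituting the PDE into the drift rewrites this as $du(t,X_t)=-h(t,X_t,u(t,X_t),\sigma(t,X_t)u_x(t,X_t))\,dt+\sigma(t,X_t)u_x(t,X_t)\,dW_t$. Since $Y_t=u(t,X_t)$ and the BSDE reads $dY_t=-h(t,X_t,Y_t,Z_t)\,dt+Z_t\,dW_t$, identifying finite variation and martingale parts in the canonical decomposition of the semi-martingale $Y$ yields \eqref{eq:Zu'}; continuity of the right-hand side in $t$ follows from the continuity of $u_x$, $\sigma$ and the paths of $X$.

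The main obstacle is the $\Cc^{1,2}$ regularity of $u$: differentiating through the BSDE directly only yields $\Cc^{0,1}$ regularity, and the second-order spatial smoothness together with time regularity requires either iterating the flow/Malliavin analysis with the help of the uniform ellipticity of $\sigma$ to run a Schauder/Kolmogorov-type argument, or a careful PDE bootstrap. This is exactly the point at which the machinery of \cite{MaZhang,IRR} is essential and cannot be shortcut.
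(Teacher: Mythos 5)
Your outline is sound and is essentially the standard argument; note, however, that the paper offers no proof of this proposition at all -- it is imported wholesale from \cite{MaZhang,IRR} (see also the discussion at the start of Section \ref{sub:quadprel}, where the authors explain how the cited arguments must be adapted when $(L)$ is replaced by $(Q)$). Where your route differs from what the cited references actually do is in the identification of $Z$: you first establish classical $\Cc^{1,2}$ regularity of $u$ through the semilinear PDE and then read off $Z_t=u_x(t,X_t)\sigma(t,X_t)$ from It\^o's formula, whereas Ma--Zhang obtain the representation probabilistically, via the variational processes $(\nabla X,\nabla Y,\nabla Z)$ of \eqref{edsr_gradient} and the identity $Z_t=\nabla Y_t(\nabla X_t)^{-1}\sigma(t,X_t)$ (this is precisely Lemma \ref{lemma_gradient_malliavin} of the paper), which only requires $u\in\Cc^{0,1}$ and thus does not presuppose classical solvability of the PDE. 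The distinction matters because under $(D1)$ the terminal condition $g$ is only $\mathcal{L}(X_T)$-a.e.\ differentiable and $h$ is only once continuously differentiable, so the $\Cc^{1,2}$ regularity you need cannot come from Schauder theory applied naively up to $t=T$; it relies on the interior smoothing produced by the uniform ellipticity $\sigma\geq c>0$ in $(X)$ (and on H\"older-in-time regularity of the coefficients, which $(X)$ does not literally provide), together with a verification or viscosity-uniqueness step to identify the classical PDE solution with your probabilistically defined $u(t,x)=Y^{t,x}_t$. You correctly flag this regularity upgrade as the crux and defer it to \cite{MaZhang,IRR}, which is also all the paper does, so I would not call it a gap -- but be aware that your It\^o-based identification of $Z$ is entirely downstream of that regularity, while the probabilistic identification in the references is not, which is exactly why they proceed that way.
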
  

In view of Proposition \ref{prop:Markov}, the chain rule formula implies that $Y_t$ belongs to $\mathbb{D}^{2,2}$ and 
\begin{equation}
\label{eq:D2Y}
D^2 Y_t = u_x(t,X_t) D^2 X_t + u_{xx}(t,X_t) (D X_t)^{\otimes 2}, \quad \P-a.s.
\end{equation}

Note that by definition, $Z$ is an element of $\H^2$. As a consequence, for any fixed element $t$ in $[0,T]$, the random variable $Z_t$ is not uniquely defined, which makes the density analysis ill-posed. However, by the previous proposition, $Z$ admits in our framework a continuous version. From now on, we will always consider this version.
\vspace{0.5em}

The following Lemma is due to Ma and Zhang in \cite[Lemma 2.4]{MaZhang} and to Pardoux and Peng \cite{PardouxPeng_92} for the representation of $Z$ as a Malliavin trace of $Y$ (see \eqref{prop:PP92} below). 

\begin{Lemma}\label{lemma_gradient_malliavin}
Let Assumptions $(X)$, $(L)$, $(D1)$ and $(D2)$ hold. Then, there exists a version of $(D_r X_t, D_r Y_t, D_r Z_t)$ for all $0<r\leq t\leq T$ which satisfies:
\begin{align*}
D_r X_t&=\nabla X_t (\nabla X_r)^{-1}\sigma(r,X_r),\ D_r Y_t&=\nabla Y_t (\nabla X_r)^{-1}\sigma(r,X_r),\ D_r Z_t&=\nabla Z_t (\nabla X_r)^{-1}\sigma(r,X_r),
\end{align*}
\begin{equation}
\label{prop:PP92}
Z_t=D_t Y_t:=\lim_{s \nearrow t} D_s Y_t,\ \mathbb{P}-a.s., \textrm{ for } a.e. \; t \in [0,T],
\end{equation}
where $(\nabla X,\nabla Y,\nabla Z)$ is the solution to the following FBSDE:
\begin{equation}\label{edsr_gradient}
\begin{cases}
\displaystyle \nabla X_t= \int_0^t b_x(s,X_s) \nabla X_s ds +\int_0^t \sigma_x(s,X_s)\nabla X_sdW_s,\\
\displaystyle \nabla Y_t = g'(X_T)\nabla X_T +\int_t^T \nabla h(s,\Theta_s)\cdot \nabla \Theta_s ds -\int_t^T \nabla Z_s dW_s.
\end{cases} 
\end{equation} 
\end{Lemma}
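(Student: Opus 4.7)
My plan is to exhibit an explicit candidate for $(D_rX,D_rY,D_rZ)$ built from the variation process $(\nabla X,\nabla Y,\nabla Z)$ and identify it by uniqueness in the linear FBSDE~\eqref{edsr_derive} provided by Proposition~\ref{MD}, then separately establish the trace formula~\eqref{prop:PP92} via the Markov representation of Proposition~\ref{prop:Markov}. The first observation is that under Assumption~$(X)$, $\nabla X$ solves a linear SDE with bounded coefficients $b_x(\cdot,X_\cdot),\sigma_x(\cdot,X_\cdot)$ and initial value $1$, whose Dol\'eans--Dade exponential is non-vanishing. Hence $\nabla X_t$ is $\P$-a.s.\ invertible for every $t\in[0,T]$, and the $\mathcal{F}_r$-measurable random variable $\eta_r:=(\nabla X_r)^{-1}\sigma(r,X_r)$ is well defined.

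Fix $r\in(0,T]$ and define $U_t:=\eta_r\nabla X_t$, $V_t:=\eta_r\nabla Y_t$, $K_t:=\eta_r\nabla Z_t$ for $t\in[r,T]$. Since $\eta_r\in\mathcal{F}_r$, freezing it in the forward equation of~\eqref{edsr_gradient} yields
$$U_t=\sigma(r,X_r)+\int_r^t b_x(s,X_s)\,U_s\,ds+\int_r^t\sigma_x(s,X_s)\,U_s\,dW_s,\quad t\in[r,T],$$
which is precisely the linear SDE satisfied by $D_rX$ in~\eqref{edsr_derive} with the same initial condition $\sigma(r,X_r)$ at time $r$. Uniqueness of linear SDEs with bounded coefficients gives $D_rX_t=U_t$. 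For the backward components, the driver in~\eqref{edsr_gradient} rewrites as $\nabla h(s,\Theta_s)\cdot\nabla\Theta_s=H(s,\nabla X_s,\nabla Y_s,\nabla Z_s)$ with $H$ linear in its last three arguments; multiplication by $\eta_r$ therefore shows that $(U,V,K)$ solves the same linear BSDE as $(D_rX,D_rY,D_rZ)$, with identical terminal value $g'(X_T)U_T$. Standard uniqueness for linear Lipschitz BSDEs, valid under~$(L)$ and~$(D1)$, then yields the remaining two identities.

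For the trace formula~\eqref{prop:PP92}, I would invoke Proposition~\ref{prop:Markov}: since $Y_t=u(t,X_t)$ with $u\in\mathcal{C}^{1,2}$, the Malliavin chain rule gives $D_sY_t=u_x(t,X_t)\,D_sX_t$ for every $s\leq t$. Plugging in the already-established identity $D_sX_t=\nabla X_t(\nabla X_s)^{-1}\sigma(s,X_s)$ and using $\P$-a.s.\ continuity of $s\mapsto(\nabla X_s)^{-1}$ and of $\sigma(\cdot,X_\cdot)$, we pass to the limit $s\nearrow t$ to obtain $D_tY_t=u_x(t,X_t)\sigma(t,X_t)=Z_t$ by~\eqref{eq:Zu'}. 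The main delicate point in the whole argument is justifying this limiting step, i.e.\ the regularity of $s\mapsto D_sY_t$, which rests on the path-continuity of $\nabla X$ and on the continuity of $u_x$ and $\sigma$ in their spatial argument guaranteed by~$(X)$ and Proposition~\ref{prop:Markov}.
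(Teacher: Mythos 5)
Your argument is correct and is essentially the standard one: the paper gives no proof of this lemma, deferring to Ma--Zhang [Lemma 2.4] and Pardoux--Peng, and those references proceed exactly as you do, by scaling the first-variation processes by the $\mathcal{F}_r$-measurable factor $(\nabla X_r)^{-1}\sigma(r,X_r)$ and invoking uniqueness of the linearized FBSDE \eqref{edsr_derive}, then obtaining the trace identity from $D_sY_t=u_x(t,X_t)D_sX_t$ and path continuity. The only points worth flagging are cosmetic: the forward equation in \eqref{edsr_gradient} should carry the initial condition $\nabla X_0=1$ (as you implicitly assume, otherwise $\nabla X\equiv 0$), and multiplying the BSDE by $\eta_r$ requires a brief integrability or localization remark since $(\nabla X_r)^{-1}$ is not bounded, though it has moments of all orders.
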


\begin{Remark}
Assumptions $(D1)$ and $(D2)$ are linked to the existence of first and second-order Malliavin derivatives for the $Y$ component of the solution of \reff{edsr}. We would like to point out to the reader that we only require the differentiability of $g$, $\mathcal{L}(X_T)-$a.e. Such a relaxation will be particularly useful in the quadratic case $($\textit{i.e.} in Section \ref{section:quadratic}$)$. We emphasize that when we work under Assumption $(X)$, the law of $X_T$ is absolutely continuous with respect to the Lebesgue measure and $X_T$ has finite moments of any order. Thus, thanks to standard approximation arguments, we can show that the usual chain rule formula of Malliavin calculus $($see Proposition 1.2.3. in \cite{Nualartbook}$)$ still holds for the random variable $g(X_T)$, under Assumptions $(D1)$ or $(D2)$.

%
\end{Remark}

Finally, set the following assumption
\begin{itemize}
\item[(M)] There exists a function $f\in \mathcal{C}^2(\real)$ such that for all $t\in [0,T]$: $X_t=f(t,W_t)$.
\end{itemize}
We obtain the following proposition

\begin{Proposition}\label{prop_dy_dz_r}
Under Assumptions $(M)$, $(L)$ and $(D2)$, for all $0<r,s\leq t\leq  T$ we have $D_r Y_t=D_s Y_t=Z_t$ and $D_r Z_t=D_s Z_t$, $\mathbb{P}-$a.s.
\end{Proposition}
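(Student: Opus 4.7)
My plan is to exploit the representation formulas from Lemma \ref{lemma_gradient_malliavin}, namely
\begin{equation*}
D_r X_t = \nabla X_t (\nabla X_r)^{-1} \sigma(r,X_r), \quad D_r Y_t = \nabla Y_t (\nabla X_r)^{-1} \sigma(r,X_r), \quad D_r Z_t = \nabla Z_t (\nabla X_r)^{-1} \sigma(r,X_r).
\end{equation*}
Since the factors $\nabla X_t, \nabla Y_t, \nabla Z_t$ do not depend on $r$, it suffices to show that the common quantity $\Phi_r := (\nabla X_r)^{-1} \sigma(r,X_r)$ is independent of $r$ on $(0,t]$ (almost surely), and then to combine this with \eqref{prop:PP92} to identify $Z_t$.

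The key step is to exploit Assumption (M). Since $X_t = f(t,W_t)$ with $f\in \Cc^{1,2}$, applying It\^o's formula to $f(t,W_t)$ and matching the diffusion coefficient with that of the forward SDE in \eqref{edsr} gives, by uniqueness of the It\^o decomposition,
\begin{equation*}
\sigma(t,X_t) = f_x(t,W_t), \quad t\in[0,T], \; \P-\textrm{a.s.}
\end{equation*}
On the other hand, the Malliavin chain rule applied to $f(t,W_t)$ yields $D_r X_t = f_x(t,W_t)\mathbf{1}_{[0,t]}(r)$, so that for every $r\in(0,t]$,
\begin{equation*}
D_r X_t = \sigma(t,X_t), \quad \P-\textrm{a.s.},
\end{equation*}
a quantity which does not depend on $r$. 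Comparing with the first identity above and dividing by $\nabla X_t$ (which is strictly positive thanks to the boundedness of $b_x,\sigma_x$ from Assumption (X), so that $\nabla X$ is a geometric-type SDE without zero), we obtain that $\Phi_r$ is indeed constant in $r\in(0,t]$, $\P$-a.s.

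Substituting back into the representations for $D_r Y_t$ and $D_r Z_t$ gives $D_r Y_t = D_s Y_t$ and $D_r Z_t = D_s Z_t$ for all $0<r,s\leq t$. Finally, since $s\mapsto D_s Y_t$ is constant on $(0,t]$, passing to the limit $s\nearrow t$ in \eqref{prop:PP92} yields $Z_t = D_r Y_t$ for any $r\in(0,t]$. The main subtlety is the justification of the almost-sure invertibility of $\nabla X_r$ and the identification of $\sigma(t,X_t)$ with $f_x(t,W_t)$ at the level of processes; both are straightforward under (X) and (M), but deserve explicit verification.
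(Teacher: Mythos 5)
Your proof is correct, but it takes a different route from the paper's. The paper argues directly at the level of the linearized BSDE: under $(M)$ the Malliavin derivative pair $(D_rY,D_rZ)$ from Proposition \ref{MD} solves
$$D_r Y_t=g'(X_T) f'(T,W_T)+\int_t^T \left(h_x(s,\Theta_s)f'(s,W_s)+h_y(s,\Theta_s)D_r Y_s+h_z(s,\Theta_s)D_r Z_s\right)ds- \int_t^T D_r Z_s dW_s,$$
whose terminal condition and driver do not depend on $r$ (precisely because $D_rX_s=f'(s,W_s)$), so uniqueness for Lipschitz BSDEs gives $(D_rY,D_rZ)=(D_sY,D_sZ)$, and \eqref{prop:PP92} identifies the common value of $D_rY_t$ with $Z_t$. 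You instead route the argument through the first-variation representation of Lemma \ref{lemma_gradient_malliavin}, isolating the $r$-dependence in the single factor $(\nabla X_r)^{-1}\sigma(r,X_r)$ and showing it is constant in $r$ by comparing the chain-rule formula $D_rX_t=f_x(t,W_t)$ with $\sigma(t,X_t)=f_x(t,W_t)$. Both are valid; the paper's version is more self-contained (it needs only Proposition \ref{MD} and uniqueness, not the full representation of Lemma \ref{lemma_gradient_malliavin}), while yours treats $Y$ and $Z$ in one stroke and makes transparent \emph{why} the $r$-dependence disappears. Two small points you should make explicit: Lemma \ref{lemma_gradient_malliavin} is stated under $(D1)$ and $(D2)$, so you should note that $(D2)$ implies $(D1)$; and the a.s.\ positivity of $\nabla X$ that you divide by rests on the (implicit) initial condition $\nabla X_0=1$, which is omitted in the displayed equation \eqref{edsr_gradient} — with the initial condition as literally printed there $\nabla X$ would vanish identically, so this deserves the explicit verification you flag at the end.
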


\begin{proof}
Once again we set $\Theta_s:=(X_s,Y_s,Z_s)$. We know that for all $0<r\leq t\leq T$:
\begin{align*}
 D_r Y_t&=g'(X_T) f'(T,W_T)+\int_t^T (h_x(s,\Theta_s)f'(s,W_s)+h_y(s,\Theta_s)D_r Y_s+h_z(s,\Theta_s)D_r Z_s)ds\\
 &\hspace{1cm}- \int_t^T D_r Z_s dW_s. \end{align*}

Then $(D_r Y, D_r Z)$ satisfies a linear BSDE which does not depend on $r$ and by the uniqueness of the solution we deduce that for all $0<r,s\leq t\leq  T$ we have $D_r Y_t=D_s Y_t$ and $D_r Z_t=D_s Z_t$, $\mathbb{P}-$a.s. Finally, $D_r Y_t=Z_t$ by \eqref{prop:PP92}.
\begin{flushright}
\vspace{-1em}
$\qed$
\end{flushright}
\end{proof}

\subsection{Existence of a density for the $Y$ component}  
\label{section:lip:y}
We focus in this section on the existence of a density for the marginal laws of the process $Y$ in the Lipschitz case, pursuing the study started in \cite{AntonelliKohatsu}. Towards this goal, we recall first the so-called \textit{first order conditions} introduced in \cite{AntonelliKohatsu}, which are only sufficient, as illustrated in Example \ref{exemple}. We then turn our attention to the \textit{second-order conditions} of Theorem 3.6 in \cite{AntonelliKohatsu}. We point out a (small) inefficiency in the proof of \cite[Theorem 3.6]{AntonelliKohatsu} and provide a corrected version of this result as Theorem \ref{AKmodifie}.

\vspace{0.3em}
As in \cite{AntonelliKohatsu}, we set for any $A\in\Bc(\R)$ (i.e. the Borel $\sigma$-algebra on $\R$), and $t$ in $[0,T]$ such that $\mathbb{P}(X_T \in A | \mathcal{F}_t)>0$: 
\begin{equation}
\label{eq:barg}
\underline{g}:= \inf\limits_{x \in \real} g'(x), \quad \underline{g}^A:=\inf\limits_{x\in A} g'(x),\quad \overline{g}:= \sup\limits_{x \in \real} g'(x), \quad \overline{g}^A:=\sup\limits_{x\in A} g'(x),
\end{equation}
\begin{equation}
\label{eq:barh}
\underline{h}(t):=\inf\limits_{ s\in [t,T], (x,y,z) \in \real^3} h_x(s,x,y,z),\quad \quad \overline{h}(t):=\sup\limits_{ s\in [t,T], (x,y,z) \in \real^3} h_x(s,x,y,z).
\end{equation}

\begin{Theorem}$($First-order conditions \cite[Theorem 3.1]{AntonelliKohatsu}$)$\label{thm_H+H-}
Assume that $(X)$, $(L)$ and $(D1)$ hold. Fix some $t\in(0,T]$ and set $K:=k_b+k_y+k_{\sigma}k_z$. If there exists $A\in\mathcal B(\R)$ such that $\mathbb{P}(X_T \in A | \mathcal{F}_t)>0$ and one of the two following assumptions holds
\begin{align*}
 &(H+)\quad \begin{cases}
\displaystyle \underline{g}e^{-\sgn(\underline{g})KT}+\underline{h}(t)\int_t^T e^{-\sgn(\underline{h}(s))Ks}ds\geq0 \\
\displaystyle \underline{g}^Ae^{-\sgn(\underline{g}^A)KT}+\underline{h}(t)\int_t^T e^{-\sgn(\underline{h}(s))Ks}ds>0
\end{cases}\\[0.3em]
&(H-)\quad \begin{cases}
\displaystyle  \overline{g}e^{-\sgn(\overline{g})KT}+\overline{h}(t)\int_t^T e^{-\sgn(\overline{h}(s))Ks}ds\leq 0 \\
\displaystyle \overline{g}^Ae^{-\sgn(\overline{g}^A)KT}+\overline{h}(t)\int_t^T e^{-\sgn(\overline{h}(s))Ks}ds<0,
\end{cases}
\end{align*}

then $Y_t$ has a law absolutely continuous with respect to the Lebesgue measure.
\end{Theorem}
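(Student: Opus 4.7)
The plan is to apply the Bouleau--Hirsch criterion of Theorem \ref{BH}: it suffices to show that $\|DY_t\|_{\mathfrak{H}} > 0$ $\mathbb{P}$-a.s.\ under $(H+)$, the case $(H-)$ being entirely symmetric. The first step is a reduction. By Lemma \ref{lemma_gradient_malliavin}, for $r \in (0,t]$ we have $D_r Y_t = \nabla Y_t\,(\nabla X_r)^{-1}\sigma(r,X_r)$, while $\nabla X_r > 0$ $\mathbb{P}$-a.s.\ (as the Dol\'eans exponential solving a linear SDE with bounded coefficients starting from $1$) and $\sigma(r,X_r)\geq c>0$ by Assumption $(X)$. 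Hence
$$\|DY_t\|_{\mathfrak{H}}^2 \;=\; |\nabla Y_t|^2 \int_0^t \frac{\sigma(r,X_r)^2}{(\nabla X_r)^2}\, dr,$$
so the problem reduces to showing $\nabla Y_t > 0$ $\mathbb{P}$-a.s.

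The second step is linearization. The BSDE \eqref{edsr_gradient} satisfied by $(\nabla Y,\nabla Z)$ is linear in $(\nabla Y,\nabla Z)$, so the classical representation (see \cite{ElkarouiPengQuenez}) gives
$$\nabla Y_t \;=\; \mathbb{E}\!\left[\Gamma_t^T\, g'(X_T)\, \nabla X_T + \int_t^T \Gamma_t^s\, h_x(s,\Theta_s)\, \nabla X_s\, ds \,\Big|\, \mathcal{F}_t\right],$$
where $\Theta_s:=(X_s,Y_s,Z_s)$ and $\Gamma_t^s$ is the positive Dol\'eans exponential defined by $d\Gamma_t^s=\Gamma_t^s(h_y(s,\Theta_s)\,ds+h_z(s,\Theta_s)\,dW_s)$, $\Gamma_t^t=1$. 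Since $\Gamma_t^s\nabla X_s > 0$ a.s., the sign of each contribution to $\nabla Y_t$ is inherited from that of $g'(X_T)$ and $h_x(s,\Theta_s)$. Computing $d(\Gamma_t^s\nabla X_s)$ by It\^o's formula and removing the martingale part via a Girsanov change of measure (using that $h_z$ is bounded), the drift of $\Gamma_t^s\nabla X_s/\nabla X_t$ is controlled in absolute value by $K:=k_b+k_y+k_\sigma k_z$, which yields the two-sided estimate
$$e^{-Ks}\, \nabla X_t \;\leq\; \mathbb{E}\!\left[\Gamma_t^s\, \nabla X_s \mid \mathcal{F}_t\right] \;\leq\; e^{Ks}\, \nabla X_t.$$

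The third step combines the pointwise bounds $g'(X_T)\geq \underline{g}$ and $h_x(s,\Theta_s)\geq \underline{h}(s)\geq \underline{h}(t)$ with these two-sided estimates, choosing on each term the side of the estimate determined by the sign of the scalar multiplier; this is exactly what the factors $e^{-\sgn(\underline{g})KT}$ and $e^{-\sgn(\underline{h}(s))Ks}$ encode. One obtains
$$\nabla Y_t \;\geq\; \nabla X_t \left( \underline{g}\, e^{-\sgn(\underline{g})KT} + \underline{h}(t) \int_t^T e^{-\sgn(\underline{h}(s))Ks}\, ds \right) \;\geq\; 0$$
by the first inequality of $(H+)$. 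Repeating the same computation after splitting $g'(X_T)\geq \underline{g}^A\mathbf{1}_{\{X_T\in A\}}+\underline{g}\mathbf{1}_{\{X_T\notin A\}}$ and using the a.s.\ positivity of $\mathbb{P}(X_T\in A\mid\mathcal{F}_t)$ together with $\Gamma_t^T\nabla X_T > 0$ promotes this to the strict inequality $\nabla Y_t > 0$ $\mathbb{P}$-a.s.\ under the second bound of $(H+)$, and Bouleau--Hirsch concludes. The main technical difficulty is precisely this sign bookkeeping: each possibly negative uniform lower bound $\underline{g}$ or $\underline{h}(s)$ must be paired with the correct side of the two-sided estimate on $\mathbb{E}[\Gamma_t^s\nabla X_s\mid\mathcal{F}_t]$, for otherwise one of the inequalities runs in the wrong direction and the lower bound on $\nabla Y_t$ collapses.
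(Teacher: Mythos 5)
Your proof is correct and follows essentially the same route as the paper: the paper does not reprove this recalled result of Antonelli and Kohatsu-Higa, but its own proof of the quadratic analogue (Theorem \ref{thm_H+H-_quadra}) uses exactly your strategy --- Bouleau--Hirsch, linearization of the BSDE for $D_rY_t$ via the adjoint exponential, a Girsanov change of measure to absorb the martingale part, the pathwise two-sided bounds $e^{-Ks}\leq E_s\leq e^{Ks}$ combined with the sign bookkeeping, and the restriction to $\{X_T\in A\}$ to upgrade non-negativity to strict positivity. One minor remark: you quote $D_rY_t=\nabla Y_t(\nabla X_r)^{-1}\sigma(r,X_r)$ from Lemma \ref{lemma_gradient_malliavin}, whose hypotheses include $(D2)$ while the theorem only assumes $(D1)$; this is harmless because the $Y$-part of that identity already follows under $(D1)$ from uniqueness for the linear BSDE of Proposition \ref{MD} (or one can bypass $\nabla Y$ altogether and linearize $D_rY$ directly, as the paper does in the quadratic case).
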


\begin{Remark}
Notice that $\underline{g}$ $($resp. $\overline{g})$ could be equal to $-\infty$ $($resp. $+\infty)$. Then Assumption $(H+)$ $($resp. $(H-))$ cannot be satisfied. Therefore, there is no problem if we allow the extrema of $g$ to take the values $\pm \infty$. 
\end{Remark}

\begin{Remark}\label{positivite_dy} 
In view of the proof of \cite[Theorem 3.1]{AntonelliKohatsu}, one can  show that under $(X)$, $(L)$, and $(D1)$ and if $g'\geq 0$ and $\underline{h}(t)\geq 0$ $($resp. $g'\leq 0$ and $\overline{h}(t)\leq 0)$ for $t \in [0,T]$, then for all $0<r\leq t \leq T$, $D_r Y_t \geq 0$ $($resp. $D_r Y_t \leq 0)$ and the inequality is strict if there exists $A\in\Bc(\R)$ such that $\pr(X_T \in A|\mathcal{F}_t)>0$ and $g'_{|A}>0$ $($resp. $g'_{|A}<0)$.
\end{Remark}

Note that neither Condition $(H+)$ nor Condition $(H-)$ are necessary for getting existence of a density as illustrated in the following example.

\begin{Example}\label{exemple}
Let $T=1$, $g(x)=x$, $X=W$, $h(s,x,y,z)=(s-2)x$. In this case, $K=0$ and $h_x(s,x,y,z)=s-2$ for all $(x,y,z)\in \real^3$. For any $t$ in $(0,1]$, we have:
\[ \overline{g}=\underline{g}=1,\ \underline{h}(t)=t-2,\ \overline{h}(t)=-1, \] 
so that Assumption $(H-)$ is not satisfied. Indeed,
$$ \overline{g}e^{-\sgn(\overline{g})KT}+\overline{h}(t)\int_t^T e^{-\sgn(\overline{h}(s))Ks}ds=1-(1-t)=t>0.$$

Similarly, $(H+)$  is not satisfied for any $t \in \left(0, (3-\sqrt{5})/2\right)$ since:
$$ \underline{g}e^{-\sgn(\underline{g})KT}+\underline{h}(t)\int_t^T e^{-\sgn(\underline{h}(s))Ks}ds=1+(t-2)(1-t)=-t^2+3t-1,$$ which is negative for $t \in \left(0, (3-\sqrt{5})/2\right)$. We deduce that for $t\in \left(0, (3-\sqrt{5})/2\right)$ neither Assumption $(H+)$ nor Assumption $(H-)$ is satisfied. However, we know that:
\begin{align}
\label{eq:counterex}
Y_t &= \E\left[\left.W_1+\int_t^1 (s-2)W_s ds \right| \mathcal{F}_t\right]\nonumber\\
&=W_t\left(1+\int_t^1 (s-2)ds\right)=W_t\left(-\frac12+2t-\frac{t^2}{2}\right), \ \forall t\in [0,1],\ \P-a.s.,
\end{align}
which admits a density with respect to the Lebesgue measure except when $t=0$ and $t=2-\sqrt{3}$. 
\end{Example}

Notice that in the previous example, the generator does not depend on $z$. In that setting, another result is derived \cite{AntonelliKohatsu}, involving so-called \textit{second order conditions}. There, the authors of \cite{AntonelliKohatsu} benefit from the absence of $z$ in the driver to make a higher order expansion of the Malliavin norm $\int_0^T |D_r Y_t|^2 dr$. The price to pay is that the condition involves a mapping $\tilde{h}$ (see \eqref{eq:htilde} below), which is essentially a sum of derivatives of the driver $h$, which goes beyond the simple derivative $h_x$. However, Example \ref{exemple} provides a counter-example to \cite[Theorem 3.6]{AntonelliKohatsu}. Indeed, the second-order conditions proposed in \cite[Theorem 3.6]{AntonelliKohatsu} entails that $Y_t$ admits a density, when $t\neq \frac12$, so in particular at $t=2-\sqrt{3}$. However from \eqref{eq:counterex}, $Y_{2-\sqrt{3}}=0$. This example proves that \cite[Theorem 3.6]{AntonelliKohatsu} has to be modified. The proof of \cite[Theorem 3.6]{AntonelliKohatsu} is essentially correct, except that in their proof the original Brownian motion $W$ is not a Brownian motion any more under the new measure $\mathbb Q$ defined in \cite[page 275]{AntonelliKohatsu} and need to be replaced by the process $W'_\cdot:=W_\cdot-\int_0^\cdot \sigma_x(s,X_s)ds$ which is a $\mathbb Q$-Brownian motion. This leads to the two extra terms $-(\sigma \sigma_xh_{xx}+z\sigma_xh_{xy})$ in the expression of the mapping \eqref{eq:htilde} below, compareLd to the original expression of $\tilde{h}$ in the statement of \cite[Theorem 3.6]{AntonelliKohatsu}. We refer the reader to Example \ref{counterexample} below and we propose a corrected version of \cite[Theorem 3.6]{AntonelliKohatsu} as Theorem \ref{AKmodifie} (whose proof exactly follows the original one up to the introduction of $W'$), in which the modified second-order conditions are sufficient, and necessary in the special situation of Example \ref{exemple}.

\vspace{0.3em}
Consider the FBSDE \eqref{edsr} when $h$ does not depend on $z$ and define:
\begin{align}
\label{eq:htilde}
\tilde{h}(s,x,y,z):=& -\left( h_{xt}+b h_{xx}-hh_{xy}+\frac12(\sigma^2 h_{xxx}+2z\sigma h_{xxy}+z^2h_{xxy})\right)(s,x,y)\nonumber\\
&-\left((h_y+b_x)h_x+\sigma \sigma_xh_{xx}+z\sigma_xh_{xy}\right)
(s,x,y).
\end{align}
$$ \tilde{g}(x):=g'(x)+(T-t)h_x(T,x,g(x)),$$
$$ \underline{\tilde{g}}:=\min\limits_{x\in \mathbb{R}} \tilde{g}(x), \quad \overline{\tilde{g}}:=\max\limits_{x\in \mathbb{R}} \tilde{g}(x),\quad \underline{\tilde{g}}^A:=\min\limits_{x\in A} \tilde{g}(x), \quad \overline{\tilde{g}}^A:=\max\limits_{x\in A} \tilde{g}(x),$$
$$\underline{\tilde{h}}(t):=\min\limits_{[t,T]\times \mathbb{R}^3} \tilde{h}(s,x,y,z), \ \overline{\tilde{h}}(t):=\max\limits_{[t,T]\times \mathbb{R}^3} \tilde{h}(s,x,y,z).$$
The following theorem corrects Theorem 3.6 in \cite{AntonelliKohatsu}.

\begin{Theorem}$($Second-order conditions \cite[Theorem 3.6]{AntonelliKohatsu}$)$\label{AKmodifie}
Fix some $t\in(0,T]$, assume that $h$ does not depend on $z$, that Assumptions $(X)$, $(L)$ and $(D1)$ hold and set $K:=k_y+k_b$. If there exists $A\in\mathcal B(\R)$ such that $\mathbb{P}(X_T \in A | \mathcal{F}_t)>0$ and one of the two following assumptions holds
 \begin{align*}
&\widetilde{(H+)} \quad  \begin{cases}
\displaystyle \underline{\tilde{g}}e^{-\sgn(\underline{\tilde{g}})KT}+\underline{\tilde{h}}(t)\int_t^T e^{-\sgn(\underline{\tilde{h}}(s))Ks}(T-s)ds\geq0 \\
\displaystyle \underline{\tilde{g}}^Ae^{-\sgn(\underline{\tilde{g}}^A)KT}+\underline{\tilde{h}}(t)\int_t^T e^{-\sgn(\underline{\tilde{h}}(s))Ks}(T-s)ds>0,
\end{cases}\\[0.3em]
&\widetilde{(H-)} \quad \begin{cases}
\displaystyle \overline{\tilde{g}}e^{-\sgn(\overline{\tilde{g}})KT}+\overline{\tilde{h}}(t)\int_t^T e^{-\sgn(\overline{\tilde{h}}(s))Ks}(T-s)ds\leq 0 \\
\displaystyle \overline{\tilde{g}}^Ae^{-\sgn(\overline{\tilde{g}}^A)KT}+\overline{\tilde{h}}(t)\int_t^T e^{-\sgn(\overline{\tilde{h}}(s))Ks}(T-s)ds<0,
\end{cases}
\end{align*}
then the first component $Y_t$ of the solution of BSDE \eqref{edsr} has a law which is absolutely continuous with respect to the Lebesgue measure.
\end{Theorem}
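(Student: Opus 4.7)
The proof proceeds by applying the Bouleau--Hirsch criterion (Theorem \ref{BH}) to $F = Y_t$: it suffices to check that $\|DY_t\|_{\h}^2 = \int_0^t |D_r Y_t|^2\, dr > 0$ $\mathbb{P}$-a.s. Lemma \ref{lemma_gradient_malliavin} gives the factorisation $D_r Y_t = \nabla Y_t (\nabla X_r)^{-1} \sigma(r,X_r)$, and Assumption $(X)$ ensures both $\sigma(r,X_r) \geq c > 0$ and $\nabla X_r > 0$. The problem therefore reduces to proving that $\nabla Y_t$ has a constant sign, non-vanishing with full probability.

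Since $h$ does not depend on $z$, the BSDE \eqref{edsr_gradient} for $\nabla Y$ is linear, so the standard linearisation yields, with $\Theta_s := (X_s,Y_s,Z_s)$ and $\Gamma^t_s := \exp\bigl(\int_t^s h_y(r,\Theta_r)\, dr\bigr)$,
$$\nabla Y_t = \mathbb{E}_t\!\left[ g'(X_T)\, \Gamma^{t}_T\, \nabla X_T + \int_t^T h_x(s,\Theta_s)\, \Gamma^{t}_s\, \nabla X_s\, ds \right].$$
Following \cite{AntonelliKohatsu}, one further absorbs the martingale part of $\nabla X/\nabla X_t$ through a Girsanov change of measure $\mathbb{Q}$ whose Radon--Nikod\'ym derivative is the Dol\'eans exponential $\mathcal{E}\bigl(\int_0^{\cdot} \sigma_x(r,X_r)\, dW_r\bigr)$. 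Under $\mathbb{Q}$, it is the process $W'_\cdot := W_\cdot - \int_0^{\cdot} \sigma_x(s,X_s)\, ds$ that is a Brownian motion, \emph{not} the original process $W$ --- this is precisely the point that was overlooked in the proof of \cite[Theorem 3.6]{AntonelliKohatsu}.

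One then applies It\^o's formula under $\mathbb{Q}$ to the integrand $h_x(s, X_s, Y_s)$, using Proposition \ref{prop:Markov} to express the martingale part $Z_s\, dW_s$ of $Y$ via $Z_s = u_x(s,X_s)\sigma(s,X_s)$. An integration by parts in the time variable rearranges the double integral so that the factor $(T-s)$ appears in the remainder, and the boundary evaluation at $s=T$ combines $g'(X_T)$ with $(T-t)h_x(T,X_T,g(X_T))$ to produce exactly $\tilde g(X_T)$. The remainder drift is precisely $\tilde h$ as in \eqref{eq:htilde}: the two extra terms $-\sigma\sigma_x h_{xx}$ and $-z\sigma_x h_{xy}$ come from the quadratic covariations between the martingale parts of $X$, $Y$ and the drift shift $\int \sigma_x\, ds$ connecting $W$ to the $\mathbb{Q}$-Brownian motion $W'$.

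This yields a representation of the form
$$\nabla Y_t = \nabla X_t \cdot \mathbb{E}^{\mathbb{Q}}_t\!\left[ \tilde g(X_T)\, \tilde\Gamma^{t}_T + \int_t^T \tilde h(s,\Theta_s)(T-s)\, \tilde\Gamma^{t}_s\, ds \right]$$
for a positive weight $\tilde\Gamma^t_s$ bounded between $e^{-K(T-t)}$ and $e^{K(T-t)}$. Under $\widetilde{(H+)}$, every summand inside the conditional expectation is non-negative and is strictly positive on the event $\{X_T \in A\}$, which has positive conditional probability; hence $\nabla Y_t > 0$ a.s. The case $\widetilde{(H-)}$ is symmetric and yields $\nabla Y_t < 0$ a.s. Bouleau--Hirsch then gives the absolute continuity of the law of $Y_t$. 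The principal technical obstacle, and the point requiring correction relative to \cite{AntonelliKohatsu}, is carrying out the It\^o expansion under $\mathbb{Q}$ using the proper Brownian motion $W'$, which alone accounts for the two supplementary terms appearing in $\tilde h$.
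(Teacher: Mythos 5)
Your proposal is correct and follows essentially the same route as the paper, which itself defers to the original proof of \cite[Theorem 3.6]{AntonelliKohatsu} with the single correction that under the Girsanov measure $\mathbb{Q}$ the It\^o expansion must be carried out with the $\mathbb{Q}$-Brownian motion $W'_\cdot = W_\cdot - \int_0^\cdot \sigma_x(s,X_s)\,ds$ rather than $W$. You correctly identify that this substitution is the source of the two supplementary terms $-(\sigma\sigma_x h_{xx} + z\sigma_x h_{xy})$ in $\tilde h$, which is exactly the fix the paper describes.
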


\begin{Example}\label{counterexample}
We go back to Example \ref{exemple} with $g\equiv Id.$ and $h(s,x,y,z)=(s-2)x$ which does not depend on $z$. On the one hand, we know from \eqref{eq:counterex} that for all $t\in (0,1]$, the law of $Y_t$ has a density except when $t=0$ or $t=2-\sqrt{3}$. On the other hand, our conditions in Theorem \ref{AKmodifie} read:
\[ \overline{\tilde{g}}=\tilde{\underline{g}}=\tilde{g}(x)=t, \quad \tilde{h}(t,x,y)=\overline{\tilde{h}}(t)=\underline{\tilde{h}}(t)=-1, \quad K=0,
\]
from which $\widetilde{(H+)}$ becomes:
\[ t-\int_t^1 (1-s) ds = t-(1-t)+\frac12-\frac{t^2}{2}=-\frac{t^2}{2}+2t-\frac12 >0,\]

and $\widetilde{(H-)}$ becomes:
\[ t-\int_t^1 (1-s) ds = t-(1-t)+\frac12-\frac{t^2}{2}=-\frac{t^2}{2}+2t-\frac12 <0.\]

We hence conclude, in view of Theorem \ref{AKmodifie}, that the law of $Y_t$ has a density with respect to the Lebesgue measure for every $t\in (0,1] \backslash \{2-\sqrt{3}\}$.

\vspace{0.5em}
In this particular example, notice that Theorem \ref{AKmodifie} is more accurate than Theorem \ref{thm_H+H-} since Condition $\widetilde{(H+)}$ and Condition $\widetilde{(H-)}$ are sufficient \underline{and} necessary to obtain the existence of a density for $Y$. Finally, we emphasize once more that the counterpart of Condition $\widetilde{(H-)}$ in \cite[Theorem 3.6]{AntonelliKohatsu} gives that whenever $2t-1<0$, $Y_t$ admits a density, which is clearly satisfied for $t=2-\sqrt{3}$. However we know that $Y_{2-\sqrt{3}}=0$. 
\end{Example}  

\subsection{Existence of a density for the control variable $Z$}
\label{section:lip:z}

We now turn to the problem of existence of a density for the marginal laws of $Z$. This question was studied in \cite{AbouraBourguin} when the generator is linear in $z$, that is to say $h(t,x,y,z)=\tilde{h}(t,x,y)+\alpha z$, which is from our point of view a too stringent assumption since by a Girsanov transformation this equation basically reduces to a BSDE with a generator which does not depend on $z$. We focus here on a general function $h$ satisfying Assumption (L). Consider the two following assumptions
\begin{itemize}
\item[(C+)] $h_x,h_{xx},h_{yy},h_{zz},h_{xy}\geq 0$ and $h_{xz}= h_{yz}= 0$,
\item[(C-)] $h_x,h_{xx},h_{yy},h_{zz},h_{xy}\leq 0$ and $h_{xz}= h_{yz}= 0$.
\end{itemize} 

Let $t\in (0,T]$ and $A\in\Bc(\R)$. We set:
$$ \underline{g''}:= \min\limits_{x \in \mathfrak{S}(X_T)} g''(x), \quad \underline{g''}^A:=\min\limits_{x\in \mathfrak{S}(X_T) \cap A} g''(x),\quad \underline{g'}:= \min\limits_{x \in \mathfrak{S}(X_T)} g'(x), \quad \underline{g'}^A:=\min\limits_{x\in \mathfrak{S}(X_T) \cap A} g'(x),$$
$$ \underline{h_{xx}}(t):=\min\limits_{ s\in [t,T], (x,y,z) \in \real^3} h_{xx}(s,x,y,z).$$

\begin{Theorem}\label{thm_density_z_lip}
Let Assumption $(X)$, $(L)$ and $(D2)$ hold. Let $0< t \leq T$ and assume moreover
\begin{itemize}
\item There exist $(\underline a,\overline a)\in(0,+\infty)$, such that $\underline{a}\leq  D_r X_u \leq \overline{a}$, for all $0<r<u\leq T$,
\item There exists $\overline b\geq 0$, such that $0\leq  D_{r,t}^2 X_u\leq \overline{b}$, for all $0<r,t<u\leq T$,
\item $(C+)$ holds 
\item $h_{xy}= 0$ or $(h_{xy}\geq 0$ and $g'\geq 0$, $\mathcal{L}(X_T)-a.e.)$.
\end{itemize}
If there exists a set $A\in \mathcal B(\real)$ such that $\pr(X_T\in A | \mathcal{F}_t)>0$ and such that
$$ \mathbf{1}_{\{\underline{g''}<0\}}\underline{g''}\overline{a}^2+\underline{g'}\mathbf{1}_{\{\underline{g'}<0\}}\overline{b}+(\mathbf{1}_{\{\underline{g''}\geq 0\}}\underline{g''}+\underline{h_{xx}}(t)(T-t))\underline{a}^2\geq 0,$$
and
$$ (\mathbf{1}_{\{\underline{g''}^A<0\}}\underline{g''}^A\overline{a}^2+\underline{g'}^A\mathbf{1}_{\{\underline{g'}<0\}}\overline{b}) +(\mathbf{1}_{\{\underline{g''}^A\geq 0\}}\underline{g''}^A+\underline{h_{xx}}(t)(T-t))\underline{a}^2>0, $$

then, the law of $Z_t$ has a density with respect to the Lebesgue measure.
\end{Theorem}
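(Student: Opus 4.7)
The strategy is to apply the Bouleau--Hirsch criterion of Theorem 2.3 to $Z_t$. The Malliavin differentiability $Z_t\in\mathbb{D}^{1,2}$ is ensured by $(D2)$ (as explained before Proposition 3.4, via the second-order Malliavin differentiability of $Y$), so the only task is to show $\|DZ_t\|_{\mathfrak H}>0$ almost surely. The plan is to identify $D_r Z_t$ with the diagonal of a second Malliavin derivative of $Y$, to derive for it a linear BSDE, and then via a Feynman--Kac--type representation combined with the sign conditions $(C+)$ and the Ma--Zhang identities of Lemma 3.7 to force strict positivity.

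Concretely, I first apply (2.7) to the first-order BSDE (2.4) for $(D_rY,D_rZ)$ to obtain $D_rZ_t=\lim_{s\nearrow t}D_s(D_rY_t)=D_{r,t}^2Y_t$. Differentiating (2.4) once more in the Malliavin sense, the pair $(D_{r,\tau}^2Y,D_{r,\tau}^2Z)$ solves a linear BSDE on $[\max(r,\tau),T]$ with terminal $g''(X_T)D_rX_TD_\tau X_T+g'(X_T)D_{r,\tau}^2X_T$ and, using $h_{xz}=h_{yz}=0$ from $(C+)$, driver $h_yD_{r,\tau}^2Y+h_zD_{r,\tau}^2Z+F_s^{r,\tau}$ where $F_s^{r,\tau}=h_xD_{r,\tau}^2X_s+h_{xx}D_rX_sD_\tau X_s+h_{xy}(D_rX_sD_\tau Y_s+D_\tau X_sD_rY_s)+h_{yy}D_rY_sD_\tau Y_s+h_{zz}D_rZ_sD_\tau Z_s$. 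Introducing the strictly positive adjoint $\Gamma_u^v:=\exp(\int_u^vh_y\,dw)\,\mathcal E(\int_u^\cdot h_z\,dW)_v$ and the associated Girsanov measure $\mathbb Q\sim\mathbb P$ (legitimate since $h_y,h_z$ are bounded by $(L)$), the standard linearization at $\tau=t$ yields $D_rZ_t=\mathbb E^{\mathbb Q}\bigl[\Gamma_t^T(g''(X_T)D_rX_TD_tX_T+g'(X_T)D_{r,t}^2X_T)+\int_t^T\Gamma_t^vF_v^{r,t}\,dv\,\big|\,\mathcal F_t\bigr]$.

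The rest is a term-by-term sign analysis. From the hypotheses, $\underline a\le D_rX_v\le\overline a$ with $\underline a>0$ and $0\le D_{r,t}^2X_v\le\overline b$; moreover the relation $D_rX_v=\nabla X_v(\nabla X_r)^{-1}\sigma(r,X_r)\ge\underline a>0$ combined with $\nabla X>0$ (positive exponential solution of its linear SDE) forces $\sigma(\cdot,X_\cdot)>0$. Lemma 3.7 then gives $D_rY_vD_\tau Y_v=(\nabla Y_v)^2(\nabla X_r\nabla X_\tau)^{-1}\sigma(r,X_r)\sigma(\tau,X_\tau)\ge0$ and analogously $D_rZ_vD_\tau Z_v\ge0$, so under $(C+)$ the forcing terms $h_{yy}D_rYD_\tau Y$ and $h_{zz}D_rZD_\tau Z$ are non-negative irrespective of the individual signs of $\nabla Y,\nabla Z$. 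The $h_{xy}$ cross term either vanishes or, under the alternative $h_{xy}\ge0$ with $g'\ge0$, is controlled by $\nabla Y\ge0$: this follows from comparison applied to the gradient BSDE (3.5) using $g'(X_T)\nabla X_T\ge0$ and the non-negativity of the forcing $h_x\nabla X_v$ under $(C+)$ (Remark 3.3). Bounding the terminal pointwise by splitting on the signs of $\underline{g''}$ and $\underline{g'}$ (use $\overline a^2$ resp.\ $\overline b$ as worst-case factors when negative, and $\underline a^2$ resp.\ $0$ otherwise), and bounding the $h_{xx}$-integral from below by $\underline{h_{xx}}(t)(T-t)\underline a^2$, the first (non-strict) hypothesis gives $D_rZ_t\ge0$ a.s. The second (strict) hypothesis, applied with the over-$A$ infima $\underline{g''}^A,\underline{g'}^A$, adds a strictly positive contribution of the form $c\cdot\mathbb E^{\mathbb Q}[\mathbf 1_{\{X_T\in A\}}\mid\mathcal F_t]$ with $c>0$, and since $\mathbb Q\sim\mathbb P$ and $\mathbb P(X_T\in A\mid\mathcal F_t)>0$ a.s., we conclude $D_rZ_t>0$ a.s.\ for every $r\in(0,t)$. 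Hence $\|DZ_t\|_{\mathfrak H}>0$ a.s., and Theorem 2.3 yields the density.

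The main obstacle is the sign of the cross products $D_rY_vD_\tau Y_v$ and $D_rZ_vD_\tau Z_v$ entering $F^{r,\tau}$: neither factor carries an intrinsic sign under the present hypotheses, and the positivity of the product is recovered only through the explicit Ma--Zhang gradient representation of Lemma 3.7 combined with the strict positivity of $\sigma$ along $X$ forced by $\underline a>0$; a secondary technical point is to justify the second-order Malliavin differentiation leading to the BSDE for $(D_{r,\tau}^2Y,D_{r,\tau}^2Z)$ and the commutation $D_rZ_t=D_{r,t}^2Y_t$, which rests on $(D2)$ and Proposition 3.4.
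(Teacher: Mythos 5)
Your proposal is correct and follows essentially the same route as the paper's proof: differentiate the Malliavin BSDE once more, remove the $h_z D^2Z$ term by a Girsanov change of measure, linearize with the $e^{\int h_y}$ factor, and then run a term-by-term sign analysis in which the cross products $D_rY\,D_\tau Y$ and $D_rZ\,D_\tau Z$ are made non-negative via the Ma--Zhang gradient representation of Lemma \ref{lemma_gradient_malliavin}, the $h_{xy}$ term via Remark \ref{positivite_dy}, and strict positivity is obtained by restricting to $\{X_T\in A\}$ before passing to the limit $D_rZ_t=\lim_{s\nearrow t}D^2_{r,s}Y_t$ and invoking Bouleau--Hirsch. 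The only cosmetic difference is that you package the exponential weight and the Girsanov density into a single adjoint process $\Gamma$, which the paper performs in two separate steps.
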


\begin{proof} Under the assumptions of Theorem \ref{thm_density_z_lip}, we obtain for $0<r,s< t\leq T$:
\begin{align*}
D_{r,s}^2 Y_t=&\ g''(X_T)D_r X_T D_s X_T+g'(X_T)D_{r,s}^2 X_T -\int_t^T D_{r,s}^2 Z_u dW_u\\
&+\int_t^T[h_x(u,\Theta_u)D_{r,s}^2 X_u+h_{xx}(u,\Theta_u)D_r X_u D_s X_u+h_{xy}(u,\Theta_u) D_s X_u D_r Y_u\\
&+h_y(u,\Theta_u) D_{r,s}^2 Y_u +h_{xy}(u,\Theta_u) D_r X_u D_s Y_u+h_{yy}(u,\Theta_u) D_r Y_u D_s Y_u\\
&+h_z(u,\Theta_u) D_{r,s}^2 Z_u+h_{zz}(u,\Theta_u)D_r Z_u D_s Z_u] du.
\end{align*}  
Let $\tilde{\pr}$ be the probability equivalent to $\pr$ such that
\begin{equation} \label{tilde_p}
\frac{d\tilde{\pr}}{d\pr}=\exp\left(\int_0^T h_z(s,\Theta_s) dW_s-\frac12\int_0^T\abs{h_z(s,\Theta_s)}^2ds\right),
\end{equation} 
where $h_z$ is bounded thanks to Assumption (L).
Under $\tilde{\pr}$ defined by \eqref{tilde_p}, we obtain:
\begin{align*}D_{r,s}^2 Y_t&=\mathbb{E}^{\tilde{\pr}}\Big[g''(X_T)D_r X_T D_s X_T+g'(X_T)D_{r,s}^2 X_T\\
& \hspace{1cm}+\int_t^T[h_x(u,\Theta_u)D_{r,s}^2 X_u+h_{xx}(u,\Theta_u)D_r X_u D_s X_u+h_{xy}(u,\Theta_u) D_s X_u D_rY_u\\
&\hspace{2cm}+h_y(u,\Theta_u) D_{r,s}^2 Y_u +h_{xy}(u,\Theta_u) D_r X_u D_s Y_u+h_{yy}(u,\Theta_u) D_r Y_u D_s Y_u\\
&\hspace{2cm}+h_{zz}(u,\Theta_u)D_r Z_u D_s Z_u] du\Big|\Fc_t\Big]. 
\end{align*}
By standard linearization techniques, we obtain:
\begin{align*}D_{r,s}^2 Y_t&=\mathbb{E}^{\tilde{\pr}}\Big[e^{\int_t^T h_y(u,\Theta_u)du}(g''(X_T)D_r X_T D_s X_T+g'(X_T)D_{r,s}^2 X_T) \\
&\hspace{1cm}+\int_t^T e^{\int_t^u h_y(v,\Theta_v)dv}[h_x(u,\Theta_u)D_{r,s}^2 X_u+h_{xx}(u,\Theta_u)D_r X_u D_s X_u\\
&\hspace{1cm}\hspace{1cm} + h_{xy}(u,\Theta_y) (D_r X_u D_s Y_u +D_s X_u D_r Y_u)\\
&\hspace{1cm}\hspace{1cm}+h_{yy}(u,\Theta_u) D_r Y_u D_s Y_u+h_{zz}(u,\Theta_u)D_r Z_u D_s Z_u] du\Big|\mathcal F_t\Big]. \end{align*}

Then, using Remark \ref{positivite_dy}, Lemma \ref{lemma_gradient_malliavin} and our assumptions we obtain: 
\begin{align*}
&e^{\int_t^T h_y(u,\Theta_u)du}(g''(X_T)D_r X_T D_s X_T+g'(X_T)D_{r,s}^2 X_T)\\
&\quad +\int_t^T e^{\int_t^u h_y(v,\Theta_v)dv}[h_x(u,\Theta_u)D_{r,s}^2 X_u+h_{xx}(u,\Theta_u)D_r X_u D_s X_u\\
&\quad+ h_{xy}(u,\Theta_y) (D_r X_u D_s Y_u +D_s X_u D_r Y_u)\\
&\quad +h_{yy}(u,\Theta_u) D_r Y_u D_s Y_u+h_{zz}(u,\Theta_u)D_r Z_u D_s Z_u] du\\
&\geq e^{\int_t^T h_y(u,\Theta_u)du} \left(\mathbf{1}_{\{\underline{g''}<0\}}\underline{g''}\overline{a}^2+\underline{g'}\mathbf{1}_{\{\underline{g'}<0\}}\overline{b}+(\mathbf{1}_{\{\underline{g''}\geq 0\}}\underline{g''}+\underline{h_{xx}}(t)(T-t))\underline{a}^2\right)\geq 0.
\end{align*}

We deduce that:
\begin{align*}D_{r,s}^2 Y_t\geq&\ \mathbb{E}^{\tilde{\pr}}\Big[e^{\int_t^T h_y(u,\Theta_u)du}\mathbf{1}_{X_T\in A}(g''(X_T)D_r X_T D_s X_T+g'(X_T)D_{r,s}^2 X_T) \\
&+\mathbf{1}_{X_T\in A}\int_t^T e^{-K(u-t)}[h_{xx}(u,\Theta_u)\underline{a}^2] du\Big|\mathcal{F}_t\Big]\\
\geq &\ e^{-KT}\left(\mathbf{1}_{\{\underline{g''}^A<0\}}\underline{g''}^A\overline{a}^2+\underline{g'}^A\mathbf{1}_{\{\underline{g'}<0\}}\overline{b}\right)\mathbb{\tilde{\pr}}(X_T\in A | \mathcal{F}_t)\\
& +e^{-KT}\left(\mathbf{1}_{\{\underline{g''}^A\geq 0\}}\underline{g''}^A+\underline{h_{xx}}(t)(T-t)\right)\underline{a}^2\mathbb{\tilde{\pr}}(X_T\in A | \mathcal{F}_t).
\end{align*}

Using the fact that $D^2 Y_t$ is symmetric, the chain rule formula, \eqref{eq:Zu'} and \eqref{eq:D2Y} and the fact that $\lim_{s \nearrow t} D_{r,s}^2 X_t = \sigma'(t,X_t) D_r X_t$, we have that $ \lim_{s \nearrow t} D^2_{r,s} Y_t =D_r Z_t,$ from which we deduce that $D_r Z_t>0$, $\P-a.s.$ Then according to Bouleau and Hirsch's Theorem, we conclude that the law of $Z_t$ has a density with respect to the Lebesgue measure.
\begin{flushright}
\vspace{-1em}
$\qed$
\end{flushright}
\end{proof}

\begin{Remark}\label{ab}
Notice that the sign assumption on $D^2X$ can be obtained under the following sufficient conditions.
\begin{itemize}
\item[$(X+)$] For any $t\in[0,T]$, the maps $x\longmapsto b(t,x)$ and $x\longmapsto \sigma(t,x)$ are respectively in $\mathcal C^2(\R)$ and $\mathcal C^3(\R)$, and there exists $c>0$ such that $$\sigma\geq c>0,\quad \sigma' \geq 0,\quad \sigma'',\sigma'''\leq 0 \; \text{ and } \; [\sigma,[\sigma,b]]\geq 0,$$ where $[b,\sigma]$ denotes the Lie bracket between $b$ and $\sigma$ defined by $[b,\sigma]:=b'\sigma+\sigma' b$.
\item[$(X-)$] For any $t\in[0,T]$, the maps $x\longmapsto b(t,x)$ and $x\longmapsto \sigma(t,x)$ are respectively in $\mathcal C^2(\R)$ and $\mathcal C^3(\R)$, and there exists $c<0$ such that 
$$\sigma\leq c<0,\ \sigma' \leq 0,\ \sigma'',\sigma'''\geq 0\text{ and }[\sigma,[\sigma,b]]\leq 0.$$
\end{itemize}

Indeed, according to the first step of the proof of Theorem 4.3 in \cite{AbouraBourguin}, Condition $(X+)$ $($resp. $(X-))$ ensures that $D^2 X$ is non-negative $($resp. non-positive$)$.\end{Remark}

\begin{Remark}
\label{rem:X+-}
One can provide an alternative version of the previous result, whose proof follows the same lines as the one of Theorem \ref{thm_density_z_lip}. Fix $t$ in $(0,T]$, let Assumptions $(L)$, $(X)$ and $(D2)$ hold and assume that there exists $A\in\Bc(\real)$ such that $\mathbb{P}(\left.X_T\in A  \right|  \mathcal{F}_t)>0$, and such that one of the two following conditions is satisfied:
\begin{itemize}
\item[$a)$] $(X+)$ and $(C+)$ hold true and $g''\geq0, \; g''_{\vert A}>0 \text{ and } g' \geq0, \quad \mathcal{L}(X_T)\text{-a.e.}$
\item[$b)$] $(X-)$ and $(C-)$ hold true and $g''\leq0, \; g''_{\vert A}<0 \text{ and } g' \leq0,  \quad \mathcal{L}(X_T)\text{-a.e.},$
\end{itemize}
then, for all $t \in (0,T]$, the law of $Z_t$ has a density with the respect to Lebesgue measure.
\end{Remark}

When Assumption (M) holds, Theorem \ref{thm_density_z_lip} takes a different form as shown below in Theorem \ref{densite_z_f}, mainly because of Proposition \ref{prop_dy_dz_r}. Indeed, consider the following assumptions:
\begin{itemize}
\item[($\tilde{C}+$)] $h_{zz}\geq 0$ and $h_{xz}=h_{yz}\equiv 0$.
\item[($\tilde{C}-)$] $h_{zz}\leq 0$ and $h_{xz}=h_{yz}\equiv 0$.
\end{itemize}

Under Assumption ($\tilde{C}+)$ or ($\tilde{C}-)$, we recall that:
\begin{align*}\label{drz}
D_r Z_t=&\ g''(X_T)|f'(T,W_T)|^2+g'(X_t)f''(T,W_T)\\
&+\int_t^T \Big[h_x(u,\Theta_u)f''(u,W_u)+h_y(u,\Theta_u) D_{r,t}^2 Y_u \Big]du\\
&+\int_t^T\Big[|f'(u,W_u)|^2 h_{xx}(u,\Theta_u)+\left(h_{xy}(u,\Theta_u)D_r Y_u+ D_t Y_uh_{xy}(u,\Theta_u) \right) f'(u,W_u)\Big]du\\
&+\int_t^T\Big[h_{yy}(u,\Theta_u) \underbrace{D_t Y_uD_r Y_u}_{=|Z_u|^2}+\underbrace{D_t Z_u D_r Z_u}_{=|D_r Z_u|^2} h_{zz}(u,\Theta_u) \Big]du - \int_t^T D_{r,t}^2 Z_u d\tilde{W}_u,
\end{align*}
with $\tilde{W}:=W-\int_0^\cdot h_z(s,\Theta_s) ds$.
We set $\theta=(x,y,z),$ and
\begin{align*}
\tilde{h}(t,w,x,y,z,\tilde{z}):= &\ h_{xx}(t,\theta)|f'(t,w)|^2+h_x(t,\theta)f''(t,w)+(h_{yy}(t,\theta)z+2h_{xy}(t,\theta)f'(t,w))z\\
&+h_y(t,\theta)\tilde{z},
\end{align*}
$$ \underline{\tilde{h}}(t)=\min\limits_{(s,w,x,y,z,\tilde{z})\in [t,T]\times \real^5} \tilde{h}(s,w,x,y,z,\tilde{z}), \quad \overline{\tilde{h}}(t)=\max\limits_{(s,w,x,y,z,\tilde{z})\in [t,T]\times \real^5} \tilde{h}(s,w,x,y,z,\tilde{z}).$$

\vspace{0.3em}
\begin{Theorem}\label{densite_z_f}
Assume that $(M)$, $(L)$ and $(D2)$ are satisfied and that there exists $A\in\Bc(\R)$ such that $\mathbb{P}(X_T\in A | \mathcal{F}_t)>0$ and one of the two following assumptions holds:

\begin{itemize}
\item[$a)$] Assumption $(\tilde{C}+)$, $ \underline{((g'\circ f) f')'}+(T-t)\underline{\tilde{h}}(t) \geq 0$ and 
$\underline{((g'\circ f) f')'_A} +(T-t)\underline{\tilde{h}}(t)>0.$

\item[$b)$] Assumption $(\tilde{C}-)$, $\overline{((g'\circ f) f')'} +(T-t)\overline{\tilde{h}}(t)\leq 0$ and $\overline{((g'\circ f) f')'_A}+(T-t)\overline{\tilde{h}}(t) <0.$
\end{itemize}
Then, the law of $Z_t$ is absolutely continuous with respect to the Lebesgue measure on $\real$.
\end{Theorem}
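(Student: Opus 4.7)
The plan is to adapt the strategy used in the proof of Theorem \ref{thm_density_z_lip}. The end goal is to apply the Bouleau--Hirsch criterion (Theorem \ref{BH}) to $Z_t\in \mathbb D^{1,2}$, i.e., to show $\|DZ_t\|_{\mathfrak H}>0$ $\mathbb P$-a.s. By Proposition \ref{prop_dy_dz_r}, under Assumption (M) the function $r\mapsto D_r Z_t$ is constant on $(0,t)$, so it suffices to show that this common value is $\mathbb P$-a.s. strictly positive in case (a) (respectively strictly negative in case (b)).

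I would start from the linear BSDE for $D_r Z$ displayed immediately above the statement of the theorem. Using Proposition \ref{prop_dy_dz_r} to substitute $D_r Y_u = D_t Y_u = Z_u$ and $D^2_{r,t} Y_u = D_r Z_u$, this BSDE admits the terminal condition $((g'\circ f)f')'(T,W_T)$ and a driver that can be rewritten as $\tilde h(u,W_u,\Theta_u,Z_u,D_r Z_u) + h_{zz}(u,\Theta_u)(D_r Z_u)^2$ (the cross-contribution $h_{xy}(D_r X\,D_t Y + D_t X\,D_r Y)$ collapses to $2h_{xy}f'(u,W_u)Z_u$ by symmetry). Next I would perform the Girsanov change to $\tilde{\mathbb P}$ with density $\mathcal E(\int_0^\cdot h_z(s,\Theta_s)\,dW_s)_T$, which is legitimate since $h_z$ is bounded by (L); under $\tilde{\mathbb P}$, the process $\tilde W := W - \int_0^\cdot h_z(s,\Theta_s)\,ds$ is a Brownian motion and the drift contribution $h_z D^2_{r,t}Z_u$ is absorbed by the change of measure, leaving a $\tilde{\mathbb P}$-martingale stochastic integral.

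Taking $\tilde{\mathbb P}$-conditional expectation gives
\begin{align*}
D_r Z_t = \mathbb E^{\tilde{\mathbb P}}\Big[&\bigl((g'\circ f)f'\bigr)'(T,W_T)\\
&{}+\int_t^T\!\bigl(\tilde h(u,W_u,\Theta_u,Z_u,D_r Z_u)+h_{zz}(u,\Theta_u)(D_r Z_u)^2\bigr)\,du\,\Big|\,\mathcal F_t\Big].
\end{align*}
In case (a), $(\tilde C+)$ ensures that $h_{zz}(D_r Z_u)^2\geq 0$ and that the mixed derivatives $h_{xz}, h_{yz}$ vanish; combined with the pointwise bound $\tilde h\geq \underline{\tilde h}(t)$ and the terminal lower bound $\underline{((g'\circ f)f')'}$, one deduces $D_r Z_t\geq \underline{((g'\circ f)f')'} + (T-t)\underline{\tilde h}(t)\geq 0$. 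Strict positivity is then obtained by restricting the conditional expectation to $\{X_T\in A\}$, which has strictly positive $\tilde{\mathbb P}$-conditional probability since $\tilde{\mathbb P}\sim\mathbb P$ and $\mathbb P(X_T\in A\mid\mathcal F_t)>0$, and using the strict bound associated with $A$; this gives $D_r Z_t>0$ $\mathbb P$-a.s., so Bouleau--Hirsch concludes. Case (b) is symmetric, using $(\tilde C-)$ and the corresponding upper bounds.

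The main technical obstacle lies in the careful bookkeeping that identifies the driver of the BSDE for $D_r Z$ with the expression $\tilde h + h_{zz}(D_r Z)^2$, relying critically on the three identities of Proposition \ref{prop_dy_dz_r} which are specific to the setting (M). A secondary subtlety is that when $h_y\not\equiv 0$ the deterministic bound $\underline{\tilde h}(t)$ becomes meaningful only after an additional linearization step that absorbs the $h_y(u,\Theta_u)D_r Z_u$ contribution into an exponential factor $e^{\int_t^u h_y(v,\Theta_v)\,dv}$; after this step, which exactly mirrors the one used in the proof of Theorem \ref{thm_density_z_lip}, the remaining driver no longer depends linearly on $D_r Z_u$ and the sign analysis above goes through verbatim, up to a harmless positive $\mathcal F_t$-measurable multiplicative factor in front of the lower bound.
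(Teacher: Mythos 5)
Your proposal is correct and follows essentially the same route as the paper: the paper's own proof simply recalls the representation of $D_rZ_t$ as a BSDE driven by $\tilde W$ (via Proposition \ref{prop_dy_dz_r}) and then declares that the argument proceeds exactly as in Theorem \ref{thm_density_z_lip}, which is precisely the Girsanov--linearization--sign-analysis--Bouleau--Hirsch chain you carry out. Your closing remark about absorbing the $h_y(u,\Theta_u)D_rZ_u$ term through an exponential factor is a welcome clarification, since otherwise the infimum defining $\underline{\tilde h}(t)$ over $\tilde z\in\R$ would be $-\infty$ whenever $h_y\not\equiv 0$.
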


\begin{proof}
Using Proposition \ref{prop_dy_dz_r}, we recall that:
\begin{align*}
D_r Z_t=&\ g''(X_T)|f'(T,W_T)|^2+g'(X_t)f''(T,W_T)\\
&+\int_t^T \tilde{h}(u,W_u,X_u,Y_u,Z_u,D_r Z_u)+|D_r Z_u|^2 h_{zz}(u) du - \int_t^T D_{r,t}^2 Z_u d\tilde{W}_u,
\end{align*}
where $\tilde{W}:=W-\int_0^\cdot h_z(u,\Theta_u) du$.
Then the proof follows exactly the same line as the one of Theorem \ref{thm_density_z_lip}.
\begin{flushright}
\vspace{-1em}
$\qed$
\end{flushright}
\end{proof}


\section{The quadratic case}
\label{section:quadratic}

We now turn to the quadratic case and provide an extension of both Theorem \ref{thm_H+H-} and Theorem \ref{thm_density_z_lip}. Note however that the assumptions of these theorems do not find immediate counterparts in the quadratic setup since the latter involves the Lipschitz constant of $h$ with respect to the $z$ variable (see Remark \ref{rk:condi}). We also emphasize that existence of densities for the $Y$ and $Z$ components in the quadratic case that we consider here was open until now. We first make precise the quadratic growth setting together with existence, uniqueness and Malliavin differentiability results for these equations in the next section. Then, we investigate respectively in Sections \ref{section:quadratic:y} and \ref{section:quadratic:z} the existence of density for respectively $Y$ and $Z$.

\subsection{Generalities on quadratic FBSDEs}
\label{sub:quadprel}

In contadistinction to the previous section, we will now assume that $h$ exhibits quadratic growth in the $z$ variable. As noted in the introduction, this case is particularly useful for applications, especially in Finance where any pricing and hedging problem on an incomplete market which can be translated into a BSDE analysis will lead to a quadratic BSDE. The precise assumption for dealing with quadratic BSDEs is given as:

\begin{itemize}
\item[(Q)]
\begin{itemize}
\item[(i)] $g : \mathbb{R} \longrightarrow \mathbb{R}$ is bounded.
\item[(ii)] $h : [0,T]\times \mathbb{R}^3 \longrightarrow \mathbb{R}$ is such that:
\begin{itemize}
\item[$\triangleright$] There exists $(K,K_z,K_y)\in(\R_+^*)^3$ such that for all $(t,x,y,z) \in [0,T]\times \mathbb{R}^3$
$$\hspace{-3em}|h(t,x,y,z)|\leq K(1+|y|+|z|^2),\ \abs{h_z}(t,x,y,z)\leq K_z(1+|z|),\ \abs{h_y}(t,x,y,z)\leq K_y.$$
\item[$\triangleright$]  There exists $C>0$ such that for all $(t,x,y,z_1,z_2) \in [0,T] \times \real^4$
\begin{equation}\label{condition_h} |h(t,x,y,z_1)-h(t,x,y,z_2)| \leq C(1+|z_1|+|z_2|) |z_1-z_2|.\end{equation}
\end{itemize}
\item[(iii)] $\int_0^T |h(s,0,0,0)|^2ds<+\infty$.
\end{itemize}
\end{itemize}

Existence and uniqueness of a solution triplet $(X,Y,Z)$ under Assumption $(Q)$ has been obtained in \cite{Kobylanski}. More precisely:

\begin{Proposition}[\cite{Kobylanski}]$($Existence and uniqueness of BSDEs$)$\label{propexq}
Under Assumptions $(X)$ and $(Q)$, there exists a unique solution $(X,Y,Z)$ in $\S^2 \times \mathbb S^\infty\times\mathbb H^2_{\rm BMO}$.  
\end{Proposition}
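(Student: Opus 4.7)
The plan is to handle the forward SDE and backward BSDE separately, since the forward equation is decoupled. Under Assumption $(X)$, the coefficients $b,\sigma$ are Lipschitz in space uniformly in time (with Lipschitz constants $k_b$ and $k_\sigma$) and have bounded value at $0$, so standard SDE theory (Cauchy--Lipschitz fixed point in $\mathbb{S}^2$) yields existence and uniqueness of $X\in\mathbb{S}^2$ with moments of all orders. This part is routine.

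The BSDE part is the classical Kobylanski result for bounded terminal conditions and drivers of quadratic growth in $z$. First I would establish the a priori $L^\infty$ bound on $Y$: setting $\tilde Y_t := \exp(\alpha Y_t)$ for $\alpha$ large enough compared to the quadratic constant $K$ in $(Q)(ii)$ and applying It\^o's formula, the quadratic term $|z|^2$ in the driver is absorbed by the $\tfrac12\alpha^2|Z|^2$ term coming from the exponential, leading after taking conditional expectation to $\|Y\|_{\mathbb{S}^\infty}\leq \Phi(\|g\|_\infty,K,T)$. Second, I would prove the BMO bound on $Z$: applying It\^o to $e^{\alpha Y_t}$ (or to $Y^2_t$ once $Y$ is known to be bounded) between a stopping time $\tau\in\mathcal T_0^T$ and $T$, the quadratic growth and the $L^\infty$ bound on $Y$ give
\[
\E_\tau\Bigl[\int_\tau^T |Z_s|^2\, ds\Bigr] \leq C\bigl(\|g\|_\infty,\|Y\|_{\mathbb{S}^\infty},K,T\bigr),
\]
uniformly in $\tau$, which is precisely $Z\in\mathbb H^2_{\rm BMO}$.

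Existence would then follow by a monotone stability argument: approximate $h$ by a sequence $h^n$ of drivers which are Lipschitz in $(y,z)$ and satisfy $(Q)$ uniformly in $n$ (for instance by a sup/inf convolution in $z$, or truncation), use Proposition \ref{propex} to get solutions $(Y^n,Z^n)$, verify that the estimates of the previous paragraph are uniform in $n$, and then pass to the limit using Kobylanski's monotone stability theorem, which provides uniform convergence of $Y^n$ to $Y$ and $\mathbb{H}^2$-convergence of $Z^n$ to $Z$ so that one may identify the limit as a solution in $\mathbb{S}^\infty\times\mathbb H^2_{\rm BMO}$.

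For uniqueness, which is the most delicate point under purely quadratic growth, the key ingredient is the local Lipschitz estimate \eqref{condition_h}. Given two solutions $(Y^i,Z^i)$, $i=1,2$, write $\Delta Y := Y^1-Y^2$, $\Delta Z := Z^1-Z^2$ and linearize:
\[
h(s,X_s,Y^1_s,Z^1_s)-h(s,X_s,Y^2_s,Z^2_s) = \alpha_s\,\Delta Y_s + \beta_s\,\Delta Z_s,
\]
with $|\alpha_s|\leq K_y$ by $(Q)(ii)$ and $|\beta_s|\leq C(1+|Z^1_s|+|Z^2_s|)$ by \eqref{condition_h}. Since $Z^1,Z^2\in\mathbb H^2_{\rm BMO}$, $\int_0^\cdot \beta_s\, dW_s$ is a BMO martingale, so its Dol\'eans-Dade exponential is a uniformly integrable martingale and defines an equivalent probability $\mathbb Q$ under which $W - \int_0^\cdot \beta_s\, ds$ is a Brownian motion. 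Under $\mathbb Q$, $\Delta Y$ solves a linear BSDE with zero terminal condition and bounded driver coefficient $\alpha$, and a standard Gronwall argument yields $\Delta Y\equiv 0$, then $\Delta Z\equiv 0$ by orthogonality of the martingale representation. The main obstacle in this proof is precisely the BMO stability needed for Girsanov's theorem to apply in the uniqueness step (and implicitly in the stability of the approximation scheme).
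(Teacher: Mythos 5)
The paper does not actually prove this proposition: it is quoted from \cite{Kobylanski}, and your sketch reproduces the standard argument behind that citation --- a priori $\mathbb S^\infty$ and BMO bounds via an exponential transformation, existence by monotone stability of Lipschitz approximations, and uniqueness by linearization together with a BMO--Girsanov change of measure made possible by condition \eqref{condition_h}. This is correct in outline and matches the role the paper itself assigns to \eqref{condition_h} in the remark following the proposition, so there is nothing to object to.
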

Note that Condition \eqref{condition_h} on the generator $h$ in Assumption $(Q)$ in the one that ensures uniqueness of the solution. Hence, it can be dropped and one can then consider the maximal solution $Y$ of the BSDE, for which our proofs still apply.

\vspace{0.3em}
Concerning the Malliavin differentiability of the processes $(X,Y,Z)$ it has been obtained in the quadratic case in \cite{AIDR} under the Assumptions $(D1)$ and $(D2)$ (that are defined in Section \ref{sub:lipprel}). Note that Proposition \ref{prop:Markov} still holds true if Assumption $(L)$ is replaced by Assumption $(Q)$. However, although the above proposition is completely proved in \cite{MaZhang} in the Lipschitz case, we did not find a proper reference in the quadratic case, except for \cite{IRR} which proves the result under Assumption $(Q)$, with the exception that $u$ is only shown to be in $\Cc^{1,1}$. Nonetheless, one can still obtain the required result by proving that Theorem $3.1$ of \cite{MaZhang} still holds for a BSDE with a driver which is uniformly Lipschitz in $y$ and stochastic Lipschitz in $z$ with a Lipschitz process in $\mathbb H^2_{\rm BMO}$ $($which is exactly the case of the BSDE satisfied by the Malliavin derivative of $Y)$. This can be achieved by following exactly the steps of the proof of Theorem 3.1 in \cite{MaZhang}, where the a priori estimates of their Lemma $2.2$ have to be replaced by those given in Lemma $A.1$ of \cite{IRR}.  As in the Lipschitz case, Relation \eqref{eq:D2Y} still holds true under $(Q)$. In addition, as for Proposition \ref{prop:Markov}, the proof of Lemma \ref{lemma_gradient_malliavin} can be extended to the quadratic setting. Finally, Propositions \ref{MD} and \ref{prop_dy_dz_r} are valid if one replaces Assumption $(L)$ by Assumption $(Q)$.

\begin{Proposition}$($Malliavin differentiabiliy$)$ \label{MDq}
Under $(X)$, $(Q)$ and $(D1)$, we have for any $t\in[0,T]$ that $(X_t,Y_t) \in (\mathbb{D}^{1,2})^2$, $Z_t \in \mathbb{D}^{1,2}$ for almost every $t$, and for all $0<r\leq t \leq T$:
\begin{equation}\label{edsr_derive1}
\begin{cases}
\displaystyle D_r X_t=\sigma(r,X_r) + \int_r^t b_x(s,X_s) D_r X_s ds + \int_r^t \sigma_x(s,X_s) D_r X_s dW_s\\
\displaystyle D_r Y_t=g'(X_T)D_rX_T+\int_t^T H(s,D_r X_s,D_r Y_s, D_r Z_s)ds -\int_t^T D_r Z_s dW_s,
\end{cases}
\end{equation} 
where $H(s,x,y,z):=h_x(s,X_s,Y_s,Z_s)x+h_y(s,X_s,Y_s,Z_s)y+h_z(s,X_s,Y_s,Z_s)z.$  
\end{Proposition}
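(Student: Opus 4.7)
The plan is to follow the approximation strategy that is standard for quadratic BSDEs (as carried out in \cite{AIDR,IRR}) and reduce the claim to the Lipschitz case Proposition \ref{MD}. The forward part is automatic: under $(X)$, classical SDE theory yields $X_t\in\mathbb{D}^{1,2}$ for every $t$, with $D_r X_t$ solving the linear SDE displayed in \eqref{edsr_derive1}. So the real work concerns the Malliavin differentiability of $(Y,Z)$ and the identification of the derivative BSDE.

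First, I would introduce a truncation of the driver in the $z$ variable, for instance $h_n(s,x,y,z):=h(s,x,y,\rho_n(z))$ where $\rho_n$ is a smooth cutoff with $\rho_n(z)=z$ on $|z|\le n$ and $|\rho_n|\le n+1$. Each $h_n$ then satisfies Assumption $(L)$ as well as $(D1)$, so the associated solution $(Y^n,Z^n)$ exists, is unique, and by Proposition \ref{MD} belongs to $\mathbb{D}^{1,2}$, with $(D_r X_t, D_r Y^n_t, D_r Z^n_t)$ solving the linear system \eqref{edsr_derive1} with coefficients $h^n_x,h^n_y,h^n_z$ evaluated at $\Theta^n_s=(X_s,Y^n_s,Z^n_s)$.

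The next step is to obtain uniform-in-$n$ estimates for $(D_r Y^n,D_r Z^n)$ in the appropriate spaces. The derivative BSDE is linear in $(D_rY,D_rZ)$ with coefficients: $h^n_y$ uniformly bounded by $K_y$ (thanks to $(Q)(ii)$), $h^n_z$ only stochastic Lipschitz with $|h^n_z(s,\Theta^n_s)|\le K_z(1+|Z^n_s|)$, and source term $h^n_x(s,\Theta^n_s)\,D_r X_s$. The key point is that by Proposition \ref{propexq} one has $Z^n\in\mathbb H^2_{\rm BMO}$ with a bound independent of $n$ (stability results for quadratic BSDEs in BMO, cf.\ \cite{AIDR}), hence $\int_0^\cdot h^n_z(s,\Theta^n_s)dW_s$ is a BMO martingale with a norm controlled uniformly in $n$. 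Applying the a priori estimates for linear BSDEs with bounded drift in $y$ and BMO coefficient in $z$ (Lemma A.1 of \cite{IRR}) gives uniform bounds of the form
\begin{equation*}
\mathbb{E}\Big[\sup_{r\le t\le T}|D_r Y^n_t|^2+\int_r^T|D_r Z^n_s|^2 ds\Big]\le C\,\mathbb{E}\Big[|g'(X_T)D_rX_T|^2+\Big(\int_r^T|h^n_x(s,\Theta^n_s)D_rX_s|ds\Big)^2\Big],
\end{equation*}
with $C$ depending only on $T$, $K_y$, and the BMO norm of $Z^n$; the right-hand side is finite and bounded in $n$ by the polynomial growth of $g'$ and $h_x$ together with the moments of $X$ and $DX$.

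Finally, I would pass to the limit: stability of quadratic BSDEs (Kobylanski's a priori estimates, together with the uniform BMO bound) yields $Y^n\to Y$ in $\mathbb S^2$ and $Z^n\to Z$ in $\mathbb H^2$ along a subsequence. Combining this convergence with the uniform Malliavin-norm estimates above and the closability of the Malliavin derivative on $\mathbb{D}^{1,2}$, one concludes that $Y_t,Z_t\in\mathbb{D}^{1,2}$ for every $t$ (resp.\ a.e.\ $t$) and that $(D_rY^n,D_rZ^n)$ converges weakly to a pair $(D_rY,D_rZ)$ solving the linear BSDE \eqref{edsr_derive1}; uniqueness for that linear BSDE (again in the stochastic-Lipschitz BMO setting) identifies the limit. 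The main obstacle, and the only non-routine point compared to the Lipschitz case, is this uniform BMO control on $Z^n$ and on $h^n_z(\cdot,\Theta^n_\cdot)$, which is what makes the a priori estimates of Lemma A.1 in \cite{IRR} available and justifies passing to the limit; once that is secured, the structure of the argument is identical to the proof of Proposition \ref{MD}.
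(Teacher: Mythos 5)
Your proposal is correct and takes essentially the route the paper intends: the paper gives no proof of Proposition \ref{MDq} but attributes it to \cite{AIDR}, whose argument is precisely the truncation-of-the-driver strategy you sketch, with the uniform control coming from the BMO bound on $Z^n$ and the a priori estimates for linear BSDEs with stochastic Lipschitz coefficient in $z$ (Lemma A.1 of \cite{IRR}), followed by closability of the Malliavin derivative to pass to the limit.
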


\subsection{Existence of a density for the $Y$ component} 
\label{section:quadratic:y}
\begin{Theorem}\label{thm_H+H-_quadra}
Fix $t\in (0,T]$ and assume that $(X)$, $(Q)$ and $(D1)$ hold. If there is $A\in\mathcal B(\R)$ such that $\mathbb{P}(X_T\in A \ | \ \mathcal{F}_t)>0$ and one of the following assumptions holds $($see Definitions \eqref{eq:barg}-\eqref{eq:barh}$)$
\begin{itemize}
\item[$(Q+)$] $g'\geq 0$ and  $g'_{\vert A} >0$, $\mathcal{L}(X_T)-$a.e. and $\underline{h}(t)\geq 0$,
\item[$(Q-)$] $g'\leq 0,\ g'_{\vert A}<0$, $\mathcal{L}(X_T)-$a.e. and $\overline{h}(t)\leq 0$,
\end{itemize}
then $Y_t$ has a law absolutely continuous with respect to the Lebesgue measure.
\end{Theorem}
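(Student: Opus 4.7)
The plan is to reduce the problem to applying the Bouleau--Hirsch criterion (Theorem~\ref{BH}) to the random variable $Y_t$. Under $(X)$, $(Q)$, $(D1)$, Proposition~\ref{MDq} ensures $Y_t\in\mathbb{D}^{1,2}$ and provides the linear BSDE \eqref{edsr_derive1} satisfied by $(D_rY,D_rZ)$. Hence it is enough to prove that $\|DY_t\|_{\mathfrak{H}}^2=\int_0^t|D_rY_t|^2\,dr>0$, $\mathbb{P}$-a.s., and this will follow from pointwise strict positivity (resp.\ strict negativity) of $D_rY_t$ for every $r\in(0,t]$.

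To remove the dependence on $D_rZ$ in \eqref{edsr_derive1}, I would perform a BMO--Girsanov change of measure. Since $Z\in\mathbb{H}^2_{\rm BMO}$ by Proposition~\ref{propexq} and $|h_z(s,\Theta_s)|\le K_z(1+|Z_s|)$ by $(Q)$, the process $\int_0^\cdot h_z(s,\Theta_s)\,dW_s$ is a BMO martingale, and Kazamaki's criterion yields that $\mathcal{E}(\int_0^\cdot h_z(s,\Theta_s)\,dW_s)$ is a true martingale. Let $\tilde{\mathbb{P}}$ be the corresponding equivalent probability, under which $\tilde{W}_\cdot:=W_\cdot-\int_0^\cdot h_z(s,\Theta_s)\,ds$ is a Brownian motion. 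The Malliavin BSDE then rewrites as a linear BSDE in $(D_rY,D_rZ)$ under $\tilde{\mathbb{P}}$ with driver $h_x(s,\Theta_s)D_rX_s+h_y(s,\Theta_s)D_rY_s$. A standard linearization (multiplying by the integrating factor $\exp(\int_t^s h_y(u,\Theta_u)\,du)$ which is bounded since $|h_y|\le K_y$) and taking the $\tilde{\mathbb{P}}$-conditional expectation gives the representation
\[
D_rY_t=\tilde{\mathbb{E}}\!\left[e^{\int_t^T h_y(u,\Theta_u)\,du}g'(X_T)D_rX_T+\int_t^T e^{\int_t^s h_y(u,\Theta_u)\,du}h_x(s,\Theta_s)D_rX_s\,ds\,\Big|\,\mathcal{F}_t\right].
\]

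Now I analyse the sign of this representation under $(Q+)$. By $(X)$, $\sigma$ is continuous in $(t,x)$ and bounded away from zero in absolute value, so has constant sign; assume $\sigma>0$ (the opposite case is symmetric and yields $D_rY_t<0$ almost surely, which is still sufficient). From the explicit form $D_rX_s=\nabla X_s(\nabla X_r)^{-1}\sigma(r,X_r)$ provided by Lemma~\ref{lemma_gradient_malliavin}, together with the positivity of $\nabla X_s(\nabla X_r)^{-1}$ (it solves a linear SDE with no jumps), we get $D_rX_s>0$ for all $0<r\le s\le T$. Under $(Q+)$, $g'(X_T)\ge 0$ $\mathcal{L}(X_T)$-a.e.\ and $h_x(s,\Theta_s)\ge 0$ for $s\in[t,T]$, so every term inside the conditional expectation is non-negative. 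Restricting to the event $\{X_T\in A\}$ where $g'(X_T)>0$ strictly, we get
\[
D_rY_t\ge\tilde{\mathbb{E}}\!\left[e^{\int_t^T h_y(u,\Theta_u)\,du}g'(X_T)D_rX_T\mathbf{1}_{\{X_T\in A\}}\,\Big|\,\mathcal{F}_t\right]>0,\quad \tilde{\mathbb{P}}-\text{a.s.},
\]
because $\tilde{\mathbb{P}}(X_T\in A\,|\,\mathcal{F}_t)>0$ $\tilde{\mathbb{P}}$-a.s.\ by equivalence of $\mathbb{P}$ and $\tilde{\mathbb{P}}$. Since $\mathbb{P}\sim\tilde{\mathbb{P}}$, the same strict inequality holds $\mathbb{P}$-a.s., giving $\|DY_t\|_{\mathfrak{H}}>0$ and therefore absolute continuity of $\mathcal{L}(Y_t)$ by Theorem~\ref{BH}. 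The case $(Q-)$ is treated symmetrically, yielding $D_rY_t<0$ $\mathbb{P}$-a.s.

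The main obstacle is the Girsanov step: in the Lipschitz case this is harmless, but here $h_z$ is only of linear growth in $Z$, so one must invoke the BMO property of $Z$ together with Kazamaki's criterion to legitimate the change of measure, and then check integrability of the right-hand side of the Malliavin BSDE under $\tilde{\mathbb{P}}$. This last point is granted by the polynomial growth of $g'$, the moment bounds $D_rX_\cdot\in L^p(\mathbb{P})$ for every $p\ge 1$ (consequence of $(X)$), and $d\tilde{\mathbb{P}}/d\mathbb{P}\in L^q(\mathbb{P})$ for some $q>1$ (again by Kazamaki / reverse Hölder in BMO), combined with Hölder's inequality. Once this analytic step is secured, the sign argument is essentially identical to the proof of the one-sided Lipschitz part of Theorem~\ref{thm_H+H-} with the additional signs encoded in $(Q+)$/$(Q-)$ (and no need for the $\mathrm{sgn}$ corrections because the hypotheses directly enforce pointwise non-negativity or non-positivity).
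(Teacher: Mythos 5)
Your proposal is correct and follows essentially the same route as the paper: Bouleau--Hirsch, the linear Malliavin BSDE, a Girsanov change of measure justified by the BMO property of $\int_0^\cdot Z_s\,dW_s$ and the linear growth of $h_z$, and a sign analysis of the resulting conditional-expectation representation restricted to the event $\{X_T\in A\}$. The only (cosmetic) difference is that the paper packages the linearization through the Antonelli--Kohatsu expression for $\gamma_{Y_t}$ with the pair $(\psi,\zeta)$ and a density involving $\sigma_x+h_z$, whereas you keep $D_rX_s$ explicit (using its positivity from Lemma~\ref{lemma_gradient_malliavin}) and Girsanov away only the $h_z$ term.
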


\begin{proof} 
To simplify the notations for any $s$ in $[0,T]$, we set $\Theta_s:=(X_s,Y_s,Z_s)$. We set $K:=k_b\vee k_y\vee k_\sigma$. We assume that (Q+) is satisfied (the proof with (Q-) follows the same lines, so we omit it). According to Bouleau-Hirsch's criterion, it is enough to show that $\gamma_{Y_t}:=\int_0^T |D_r Y_t|^2 dr >0$, $\mathbb{P}$-a.s. As in the proof of \cite[Theorem 3.6]{AntonelliKohatsu}, we have for $0\leq r \leq t \leq T$, that $D_r Y_t$ writes down as:
\begin{equation}\label{eq:y}
D_r Y_t=g'(X_T)D_r X_T+\int_t^T 
h_x(s,\Theta_s)D_r X_s+h_y(s,\Theta_s)D_r Y_s ds +\int_t^T D_r Z_s dW_s.
\end{equation}
From \reff{eq:y}, and following the expression of $\gamma_{Y_t}$ given in \cite[page 271]{AntonelliKohatsu}, we deduce that  
$$ \gamma_{Y_t}=\left(\mathbb{E}\left[ g'(X_T)\zeta_T \psi_T +\int_t^T \psi_s h_x(s,\Theta_s)\zeta_s ds | \mathcal{F}_t \right]\right)^2 (\psi_t^{-1})^2 \int_0^t (\zeta_r^{-1}\sigma(r,X_r))^2 dr,$$
with 
$$ \psi_t\zeta_t=\underbrace{e^{\int_0^t (b_x(s,X_s)+h_y(s,\Theta_s)+\sigma_x(s,X_s)h_z(s,\Theta_s))ds}}_{=:E_t} \underbrace{e^{\int_0^t (\sigma_x(s,X_s)+h_z(s,\Theta_s)) dW_s - \frac12 \int_0^t (\sigma_x(s,X_s)+h_z(s,\Theta_s))^2 ds}}_{=:M_t}.$$  
Let $\mathbb{Q}$ the probability measure equivalent to $\pr$ with density $\frac{d\mathbb{Q}}{d\pr}:=M_T$. Indeed, $M$ is a martingale as $\int_0^\cdot (\sigma_x(s,X_s)+h_z(s,\Theta_s)) dW_s$ is a BMO martingale due to the boundedness of $\sigma_x$ (by  (X)) and the fact that $|h_z(s,\Theta_s)|\leq C(1+|Z_s|)$ (by (Q)) and from the BMO property of $\int_0^\cdot Z_s dW_s$ (by Proposition \ref{propex}). We therefore have:
$$ \mathbb{E}\left[ g'(X_T)\psi_T\zeta_T+\int_t^T \psi_s h_x(s,\Theta_s)\zeta_s ds \Big|\mathcal{F}_t\right]=M_t\mathbb{E}^{\mathbb{Q}}\left[ g'(X_T) E_T +\int_t^T h_x(s,\Theta_s) E_s ds \Big| \mathcal{F}_t\right].$$

Using (Q+), we know that:
$$g'(X_T)E_T+\int_t^T h_x(s,\Theta_s) E_s ds \geq \underline{g}E_T+\underline{h}(t)\int_t^T E_sds\geq 0. $$
Thus,
\begin{align*}
&\mathbb{E}\left[ g'(X_T)\psi_T\zeta_T+\int_t^T \psi_s h_x(s,\Theta_s)\zeta_s ds \Big|\mathcal{F}_t\right]\\
&\geq M_t \mathbb{E}^{\mathbb{Q}}\left[ \mathbf{1}_{X_T \in A}\left(g'(X_T) E_T +\int_t^T h_x(s,\Theta_s) E_s ds\right) \Big| \mathcal{F}_t\right]\\
&\geq  M_t\Big(\underline{g}^Ae^{-2KT}\mathbb{E}^\mathbb{Q}\left[ \mathbf{1}_{X_T\in A} e^{-K\int_0^T |h_z(s,\Theta_s)|ds}\big| \mathcal{F}_t\right]\\
&\hspace{0.9em}+\underline{h}(t)e^{-2KT}(T-t)\mathbb{E}^\mathbb{Q}\left[ \mathbf{1}_{X_T\in A} e^{-K\int_0^T|h_z(s,\Theta_s)|ds} \big| \mathcal{F}_t \right]\Big)\\
&\geq M_t\Big(\underline{g}^Ae^{-2KT}\mathbb{E}^\mathbb{Q}\Big[ \mathbf{1}_{X_T\in A} e^{-K\sqrt{T}\sqrt{\int_0^T |h_z(s,\Theta_s)|^2ds}}\big| \mathcal{F}_t\Big]\\
&\hspace{0.9em}+\underline{h}(t)e^{-2KT}(T-t)\mathbb{E}^\mathbb{Q}\Big[ \mathbf{1}_{X_T\in A} e^{-K\sqrt{T}\sqrt{\int_0^T|h_z(s,\Theta_s)|^2ds} }\big| \mathcal{F}_t \Big]\Big),
\end{align*}

where the last inequality is due to Cauchy-Schwarz inequality. Besides, according to Assumption (Q), $|h_z(s,\Theta_s)|\leq C(1+|Z_s|).$ Then, we deduce that $\int_0^T |h_z(s,\Theta_s)|^2ds<+\infty, \ \mathbb{P}-$a.s., since $Z\in \mathbb{H}^2$. Hence, $M_t>0$, $\mathbb{P}-$a.s.
Given that the law of $X_T$ is absolutely continuous with respect to the Lebesgue measure, we deduce that $\mathbb{E}\left[ g'(X_T)\psi_T\zeta_T+\int_t^T \psi_s h_x(s,\Theta_s)\zeta_s ds \Big|\mathcal{F}_t\right]>0, \ \mathbb{P}-\text{a.s.}$
We conclude using Theorem \ref{BH}.
\begin{flushright}
\vspace{-1em}
$\qed$
\end{flushright}
\end{proof}

\begin{Remark}
Similarly to Remark \ref{positivite_dy}, the proof of Theorem \ref{thm_H+H-_quadra} shows that under $(X)$, $(Q)$, $(D1)$ and if $g'\geq 0$ and $\underline{h}(t)\geq 0$ $($resp. $g'\leq 0$ and $\overline{h}(t)\leq 0)$ for $t \in [0,T]$, then for all $0<r\leq t \leq T$, $D_r Y_t \geq 0$ $($resp. $D_r Y_t \leq 0)$ and the inequality is strict if there exists $A\in\Bc(\R)$ such that $\pr(X_T \in A|\mathcal{F}_t)>0$ and $g'_{|A}>0$ $($resp. $g'_{|A}<0)$.
 
\end{Remark}
\begin{Remark}\label{rk:condi}
Conditions $(Q+)$ and $(Q-)$ are stronger than $(H+)$ and $(H-)$, due to the unboundedness of $h_z$, which prevents us from reproducing the same proof than in \cite{AntonelliKohatsu}. Indeed, in this framework the quantity appearing for instance in $(H+)$ becomes:
$$ \underline{g}e^{-2K\sgn(\underline{g})T}e^{-K\sgn(\underline{g})\int_0^T |h_z(s)|ds}+\underline{h}(t)e^{-2K\sgn(\underline{h}(t))T}\int_t^T e^{-K\sgn(\underline{h}(t))\int_0^s |h_z(s)|ds},$$
whose sign for every $K\geq 0$ depends strongly on those of $g'$ and $h_x$. This is why we must use the stronger conditions $(Q+)$ and $(Q-)$.
\end{Remark}

\begin{Remark}
In \cite[Corollary 3.5]{dosdos} comonotonicty conditions on the data of a BSDE under Assumption $(Q)$ are given so that $Z_t \geq 0$, $\P-$a.s., $\forall t \in [0,T]$. In addition, the authors claim that strict comonotonicity entails that $Z_t>0$, which implies by Bouleau-Hirsch criterion that the law of $Y_t$ has a density with respect to the Lebesgue measure. However, we do not understand their proof and it is not true that an increasing mapping which is differentiable has a positive derivative everywhere $($even if one relaxes it by asking for a positive derivative Lebesgue-almost everywhere$)$ and one needs an extra assumption to prove that the derivative does not vanish. Indeed, take any closed set of positive Lebesgue measure with empty interior $($for instance the Smith-Volterra-Cantor set on $\R)$. By Whitney's extension Theorem, there exists a differentiable increasing map whose derivative vanishes on this set.  
\end{Remark}

\subsection{Existence of a density for the control variable $Z$}
\label{section:quadratic:z}

In this section, we obtain existence results for the density of $Z$ under Assumption (Q). We actually have exactly the same type of results as in the Lipschitz case with similar proofs, which highlights the robustness and flexibility of our approach. Let us detail first the changes that we have to make. 

\vspace{0.3em}

Under (Q), using the fact that for all $s\in [0,T]$ $|h_z(s,\Theta_s)|\leq C(1+|Z_s|)$ and according to Proposition \ref{propex} we deduce that $\int_0^\cdot h_z(s,\Theta_s) dW_s$ is a BMO-martingale. Then, according to Theorem 2.3 in \cite{Kazamaki}, the stochastic exponential of $\int_0^\cdot h_z(s,\Theta_s) dW_s$ is  a uniformly integrable martingale and we can apply Girsanov's Theorem. We also emphasize that in (Q), $g$ is not assumed to be twice continuously differentiable. Indeed, to recover the BMO properties linked to quadratic BSDEs (and thus in order to be able to apply the above reasoning), $g$ needs to be bounded, which is incompatible with g convex (or concave). Nevertheless, there exist terminal conditions $g$ which are twice differentiable almost everywhere on the support of the law of $X_T$ (which is some closed subset of $\R$), such that their second-order derivative have a given sign there. As an example, take $X=W$ and $g(x):= f(x)\mathbf{1}_{x\in[a,b]}+f(a)\mathbf{1}_{x\leq a} + f(b) \mathbf{1}_{x\leq b}$ with $f$ a twice differentiable convex function and $a,b \in \mathbb{R}$.

\begin{Theorem}\label{thm_density_z_quadra}
Let Assumptions $(X)$, $(Q)$ and $(D2)$ hold. Let $0< t \leq T$ and assume moreover
\begin{itemize}
\item There exist $(\underline a,\overline a)$ s.t., $0<\underline{a}\leq  D_r X_u \leq \overline{a}$, for all $0<r<u\leq T$.
\item There exists $\overline b$ s.t., $0\leq  D_{r,s}^2 X_u\leq \overline{b}$, for all $0<r,s<u\leq T$.
\item $(C+)$ holds and $h_y\geq 0$.
\item $h_{xy}= 0$ or $(h_{xy}\geq 0$ and $g'\geq 0$, $\mathcal{L}(X_T)$-a.e.$)$.
\end{itemize}
If there exists $A\in\Bc(\R)$ such that $\pr(X_T\in A | \mathcal{F}_t)>0$ and such that:
$$ \mathbf{1}_{\{\underline{g''}<0\}}\underline{g''}\overline{a}^2+\underline{g'}\mathbf{1}_{\{\underline{g'}<0\}}\overline{b}+(\mathbf{1}_{\{\underline{g''}\geq 0\}}\underline{g''}+\underline{h_{xx}}(t)(T-t))\underline{a}^2\geq 0,$$

and
$$ (\mathbf{1}_{\{\underline{g''}^A<0\}}\underline{g''}^A\overline{a}^2+\underline{g'}^A\mathbf{1}_{\{\underline{g'}<0\}}\overline{b}) +(\mathbf{1}_{\{\underline{g''}^A\geq 0\}}\underline{g''}^A+\underline{h_{xx}}(t)(T-t))\underline{a}^2>0,$$

then, the law of $Z_t$ has a density with respect to the Lebesgue measure.
\end{Theorem}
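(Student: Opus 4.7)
My plan is to mirror very closely the proof of Theorem \ref{thm_density_z_lip}, the only substantive changes being those forced by the fact that $h_z$ is no longer bounded but only satisfies the BMO-type bound $\abs{h_z(s,\Theta_s)}\le K_z(1+|Z_s|)$. As in the Lipschitz case, the idea is to establish that $D_rZ_t>0$ $\P$-a.s.\ for some $r\in(0,t)$ (and conclude via the Bouleau--Hirsch criterion, Theorem \ref{BH}), by showing $D^2_{r,s}Y_t$ is bounded below by a strictly positive random variable and then using $D_rZ_t=\lim_{s\nearrow t}D^2_{r,s}Y_t$, which is valid thanks to \eqref{eq:Zu'}--\eqref{eq:D2Y} and the Markovian representation of Proposition \ref{prop:Markov} (which, as recalled in Section \ref{sub:quadprel}, remains true under $(Q)$).

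\paragraph{Steps.}
First, invoking the Malliavin differentiability results stated in Section \ref{sub:quadprel}, I write down the linear BSDE satisfied by $D^2_{r,s}Y$, which has exactly the same form as in the Lipschitz case. Second, I perform the Girsanov-type change of measure $\tilde{\P}$ with density $\Ec\bigl(\int_0^\cdot h_z(u,\Theta_u)\,dW_u\bigr)_T$: this is exactly where the quadratic setting differs substantively from the Lipschitz one. Since $Z\in\H^2_{\mathrm{BMO}}$ by Proposition \ref{propexq} and $\abs{h_z(s,\Theta_s)}\le K_z(1+|Z_s|)$, the martingale $\int_0^\cdot h_z(u,\Theta_u)\,dW_u$ is itself BMO, hence by Kazamaki's criterion (as invoked in Section \ref{section:quadratic:z}) its stochastic exponential is a uniformly integrable martingale, and $\tilde W_\cdot:=W_\cdot-\int_0^\cdot h_z(u,\Theta_u)\,du$ is a $\tilde{\P}$-Brownian motion. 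Third, since $h_y$ is bounded by $K_y$ under $(Q)$, I linearize the remaining BSDE using the integrating factor $\exp(\int_t^\cdot h_y(v,\Theta_v)\,dv)$ to express $D^2_{r,s}Y_t$ as a $\tilde\P$-conditional expectation, exactly as in the proof of Theorem \ref{thm_density_z_lip}.

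\paragraph{Lower bound and conclusion.} Next, I bound the resulting integrand below term by term. The terminal terms $g''(X_T)D_rX_T D_sX_T$ and $g'(X_T)D^2_{r,s}X_T$ are handled by cases on the signs of $g''$ and $g'$ using $\underline a\le D_rX_u\le\overline a$ and $0\le D^2_{r,s}X_u\le\overline b$, yielding the contributions $\mathbf 1_{\{\underline{g''}<0\}}\underline{g''}\,\overline a^2+\underline{g'}\mathbf 1_{\{\underline{g'}<0\}}\overline b$ (and the analogous ``$A$'' versions on $\{X_T\in A\}$). The terms in $h_xD^2_{r,s}X$, $h_{xx}D_rX\,D_sX$, $h_{xy}(D_rX\,D_sY+D_sX\,D_rY)$, $h_{yy}D_rY\,D_sY$ and $h_{zz}D_rZ\,D_sZ$ are all nonnegative under $(C+)$ together with the assumption $h_y\ge0$ and the fourth bullet (which, through the quadratic analog of Remark \ref{positivite_dy} already discussed after Theorem \ref{thm_H+H-_quadra}, ensures $D_rY\ge0$ whenever it is needed). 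The extra sign assumption $h_y\ge0$ (absent in the Lipschitz Theorem \ref{thm_density_z_lip}) is what replaces the two-sided bound on $h_y$: it guarantees $\exp(\int_t^u h_y\,dv)\ge1$, hence no exponential deterioration of the lower bound as in the Lipschitz proof. Assembling everything on the event $\{X_T\in A\}$ (which has positive $\tilde\P$-conditional probability given $\Fc_t$ because $\P$ and $\tilde\P$ are equivalent and $\P(X_T\in A\mid\Fc_t)>0$), one obtains a strictly positive lower bound for $D^2_{r,s}Y_t$. Passing to the limit $s\nearrow t$ delivers $D_rZ_t>0$ $\P$-a.s.\ and Theorem \ref{BH} closes the argument.

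\paragraph{Main obstacle.} The one delicate point, and the only genuine departure from the Lipschitz proof, is justifying the Girsanov and linearization steps: in the quadratic setting one must carefully check that the stochastic integrals $\int D^2_{r,s}Z\,d\tilde W$ are true $\tilde\P$-martingales so that they vanish under the $\tilde\P$-conditional expectation. This is handled by combining the BMO bound on $\int h_z\,dW$, Kazamaki's theorem, and the $\mathbb H^p$-integrability of $D^2_{r,s}Z$ provided by the quadratic Malliavin calculus results recalled in Section \ref{sub:quadprel}.
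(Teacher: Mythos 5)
Your proposal is correct and follows essentially the same route as the paper: the authors also reduce the argument to the proof of Theorem \ref{thm_density_z_lip}, with the only genuinely new ingredient being the justification of the change of measure via the BMO property of $\int_0^\cdot h_z(u,\Theta_u)\,dW_u$ (from $\abs{h_z}\leq K_z(1+|z|)$ and $Z\in\H^2_{\rm BMO}$) together with Kazamaki's criterion. Your additional remarks on the role of $h_y\geq 0$ and on the martingale property of the stochastic integrals under $\tilde\P$ are consistent with, and slightly more explicit than, the paper's own (very terse) proof.
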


\begin{proof} As in the proof of Theorem \ref{thm_density_z_lip}, we notice that for all $0<r,t\leq s\leq T$:
\begin{align*}D_{r,s}^2 Y_t&=\mathbb{E}^{\tilde{\pr}}\Big[g''(X_T)D_r X_T D_s X_T+g'(X_T)D_{r,s}^2 X_T\\
& \hspace{1cm}+\int_t^T[h_x(u,\Theta_u)D_{r,s}^2 X_u+h_{xx}(u,\Theta_u)D_r X_u D_s X_u+h_y(u,\Theta_u)D_r Y_u D_s Y_u \\
&\hspace{1cm}\hspace{1cm}+h_{yy}(u,\Theta_u)D^2_{r,s}Y_u+h_{zz}(u,\Theta_u)D_r Z_u D_s Z_u] du\Big|\Fc_t\Big], \end{align*}
where $\tilde{\pr}$ is the equivalent probability measure to $\pr$ with density $$\frac{d\tilde{\pr}}{d\pr}:=\exp\left(\int_0^T h_z(u,\Theta_u)dW_u-\frac12\int_0^T \abs{h_z(u,\Theta_u)}^2du\right),$$ given that $\int_0^\cdot h_z(u,\Theta_u)dW_u$ is a BMO-martingale and using Theorem 2.3 in \cite{Kazamaki}. Then the proof is similar to that of Theorem \ref{thm_density_z_lip}.
\begin{flushright}
\vspace{-1em}
$\qed$
\end{flushright}
\end{proof}

\begin{Remark} In order to satisfy the condition in Theorem \ref{thm_density_z_quadra}, there are basically two types of sufficient conditions
\begin{itemize}
\item First of all, if the support of the law of $X_T$ is bounded from above, then one can take $g$ to continuously differentiable everywhere, non-decreasing, convex and bounded on this support. Then it suffices to take $h$ to be convex in $x$ as well.

\item However, when the support of the law of $X_T$ is no longer bounded from above, then it is no longer possible to find $g$ which is non-decreasing, bounded and convex on this support. We must therefore allow $g''$ to become non-positive, and the role of $h_{xx}$ becomes then crucial, as it has to be sufficiently positive in order to balance $g''$. As an example, take $X:=W$. Then $\overline a=\underline a =1$ and $\overline b=0$. One can choose $g(x):= \frac{1}{1+x^2}$. Then, there exists a positive constant $M$ such that $-2\leq g''(x)\leq M$ and by choosing $h$ such that $h$ satisfies the assumptions in Theorem \ref{thm_density_z_quadra} and $t\in(0,T)$ such that $\underline{h_{xx}}(t)(T-t)\geq 2$, we deduce that $Z_t$ admits a density.
\end{itemize}
\end{Remark}
We give also a theorem under Assumption (M):

\begin{Theorem}\label{densite_z_f_quadra}
Assume that $(M)$, $(Q)$ and $(D2)$ are satisfied and that there exists $A\in\Bc(\R)$ such that $\mathbb{P}(X_T\in A | \mathcal{F}_t)>0$ and one of the two following assumptions holds:
\begin{itemize}
\item[$a)$] Assumption $(\tilde{C}+)$, $\underline{((g'\circ f) f')'}+(T-t)\underline{\tilde{h}}(t) \geq 0$ and 
$ \underline{((g'\circ f) f')'_A} +(T-t)\underline{\tilde{h}}(t)>0.$
\item[$b)$] Assumption $(\tilde{C}-)$, $\overline{((g'\circ f) f')'} +(T-t)\overline{\tilde{h}}(t)\leq 0$ and $\overline{((g'\circ f) f')'_A}+(T-t)\overline{\tilde{h}}(t) <0.$
\end{itemize}

Then, the law of $Z_t$ is absolutely continuous with respect to the Lebesgue measure.
\end{Theorem}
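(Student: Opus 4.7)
The plan is to follow the strategy of Theorem \ref{densite_z_f} while substituting the classical Girsanov transformation by the Girsanov transformation based on the BMO-martingale $\int_0^{\cdot} h_z(u,\Theta_u)\,dW_u$, exactly as was done in Theorem \ref{thm_density_z_quadra} to pass from the Lipschitz to the quadratic regime. Ultimately, the goal is to invoke Bouleau--Hirsch's criterion (Theorem \ref{BH}) and therefore to prove that $D_rZ_t$ has a strict, controllable sign on a set of positive probability.

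First, I would apply Proposition \ref{prop_dy_dz_r} (valid under $(Q)$ in view of the discussion in Section \ref{sub:quadprel}) to get that, for $0<r,s\leq t$, $D_rY_t=D_sY_t=Z_t$ and $D_rZ_t=D_sZ_t$, $\P$-a.s. Differentiating the BSDE for $D_rY$ once more in the Malliavin sense and using $(M)$, $X_t=f(t,W_t)$, one recovers the very same expression as in the Lipschitz case:
\begin{align*}
D_rZ_t &=g''(X_T)\abs{f'(T,W_T)}^2+g'(X_T)f''(T,W_T)\\
&\quad +\int_t^T\bigl[\tilde{h}(u,W_u,X_u,Y_u,Z_u,D_rZ_u)+\abs{D_rZ_u}^2 h_{zz}(u,\Theta_u)\bigr]du-\int_t^T D_{r,t}^2 Z_u\,d\tilde W_u,
\end{align*}
where now $\tilde W:=W-\int_0^{\cdot}h_z(u,\Theta_u)\,du$ is only \emph{formally} a Brownian motion. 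Under $(Q)$, $|h_z(s,\Theta_s)|\leq K_z(1+|Z_s|)$ and $Z\in\H^2_{\rm BMO}$ (Proposition \ref{propexq}), so that $\int_0^\cdot h_z(u,\Theta_u)\,dW_u$ is a BMO-martingale; Kazamaki's theorem then guarantees that its stochastic exponential is a uniformly integrable martingale, giving a genuine Girsanov change of measure $\tilde{\P}$ under which $\tilde{W}$ is a true Brownian motion.

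Under $\tilde{\P}$, taking conditional expectations given $\Fc_t$ eliminates the stochastic integral, and proceeding by standard linearization (absorbing the bounded term $h_y$ into a non-vanishing exponential weight) delivers a conditional-expectation representation of $D_rZ_t$ of the form used in Theorem \ref{densite_z_f}. Under $(\tilde C+)$, $h_{zz}\geq 0$, so the quadratic term $|D_rZ_u|^2 h_{zz}(u,\Theta_u)$ is non-negative and may simply be dropped when seeking a lower bound; then the bounds $\underline{((g'\circ f) f')'}+(T-t)\underline{\tilde h}(t)\geq 0$ and the strict version on $A$, combined with $\tilde{\P}(X_T\in A\mid\Fc_t)>0$ (which follows from the equivalence of $\P$ and $\tilde{\P}$ and Remark \ref{densite_x}), yield $D_rZ_t>0$, $\P$-a.s. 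The case $(\tilde C-)$ is symmetric.

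The main technical obstacle, exactly as in Theorem \ref{thm_density_z_quadra}, is the justification that $\int_0^\cdot h_z(u,\Theta_u)\,dW_u$ is a BMO-martingale so that Girsanov's transformation can be performed rigorously and all conditional expectations involved make sense; once this point is settled, the sign analysis mirrors line by line the one performed in the proof of Theorem \ref{densite_z_f}, and Bouleau--Hirsch closes the argument.
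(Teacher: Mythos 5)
Your proposal is correct and follows essentially the same route as the paper, which simply states that the proof is identical to that of Theorem \ref{densite_z_f} once the BMO property of the relevant stochastic integral is used to justify the Girsanov transformation via Kazamaki's theorem. Your identification of this BMO/Kazamaki point as the only genuine change from the Lipschitz case, together with the use of Proposition \ref{prop_dy_dz_r} and the Bouleau--Hirsch criterion, matches the paper's intended argument.
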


The proof is the same as the proof of Theorem \ref{densite_z_f} using the BMO property of $\int_0^\cdot Z_s dW_s$, we therefore omit it. We now turn to the simplest case of quadratic growth BSDE and verify that it is covered by our result.
\begin{Example}
Let us consider the following BSDE
\[ Y_t=g(W_T)+\int_t^T \frac12 |Z_s|^2ds -\int_t^T Z_s dW_s,\]
where $g$ is bounded. According to Theorem \ref{thm_density_z_quadra} with $\overline{a}=\underline{a}=1$, $\overline{b}=0$ and $h_{xx}=0$, we deduce that for all $t\in (0,T]$, the law of $Z_t$ has a density with respect to the Lebesgue measure if $g''\geq 0$, $\lambda(dx)$-a.e. and if there exists $A\in\Bc(\R)$ with positive Lebesgue measure such that $g''_{|A}>0$.

\vspace{0.3em}
We emphasize that, as a sanity check, this can be verified by direct calculations. Indeed, using the fact that if $F \in \mathbb{D}^{1,2}$ then $D_r (\E[F|\mathcal{F}_t])=\E[D_r F|\mathcal{F}_t] \mathbf{1}_{[0,t)} (r)$ $($see \cite[Proposition 1.2.4]{Nualartbook}$)$ we deduce that if $0\leq r<t\leq T$ then:
$$D_r Y_t = \frac{\E[g'(W_T)e^{g(W_T)}| \mathcal{F}_t]}{\E[e^{g(W_T)}| \mathcal{F}_t]},$$which does not depend on $r$. Then according to Proposition \ref{prop:PP92},
$Z_t=  \frac{\E[g'(W_T)e^{g(W_T)}| \mathcal{F}_t]}{\E[e^{g(W_T)}| \mathcal{F}_t]}.$

Take $0<r<t\leq T$, then:
$$D_r Z_t=  \frac{\E[g''(W_T)e^{g(W_T)}+ |g'(W_T)|^2 e^{g(W_T)} | \mathcal{F}_t]\E[e^{g(W_T)}| \mathcal{F}_t]-|\E[g'(W_T)e^{g(W_T)}| \mathcal{F}_t]|^2}{\E[e^{g(W_T)}| \mathcal{F}_t]}.$$
Using Cauchy-Schwarz inequality, if $g''\geq 0$, $\lambda(dx)$-a.e. and if there exists $A\in\Bc(\R)$ with positive Lebesgue measure such that $g''_{|A}>0$, we deduce that for all $t\in (0,T]$, $Z_t$ has a density with respect to the Lebesgue measure by Theorem \ref{BH}.
\end{Example}

\section{Density estimates for the marginal laws of $Y$ and $Z$}
\label{section:densY}
Up to now, the density estimates obtained in the literature relied mainly on the fact that the framework considered implied that the Malliavin derivative of $Y$ was bounded. Hence, using the Nourdin-Viens' formula (or more precisely their Corollary 3.5 in \cite{NourdinViens}), it could be showed that the law of $Y$ has Gaussian tails. Although such an approach is perfectly legitimate from the theoretical point of view, let us start by explaining why, as pointed out in the introduction, we think that this is not the natural framework to work with when dealing with BSDEs. Consider indeed the following example.

\begin{Example}\label{rem.toostringent}
Let us consider the FBSDE \reff{edsr}, with $T=1$, $g(x):=x^3$, $h(t,x,y,z):=3x$, $b(t,x)=0$, $\sigma(t,x)=1$ and $X_0=0$. Then, simple computations show that the unique solution is given by
$$X_t=W_t,\quad Y_t=W_t^3+6W_t(1-t),\quad Z_t=3W_t^2+6(1-t).$$
Then, both $Y_t$ and $Z_t$ have a law which is absolutely continuous with respect to the Lebesgue measure, for every $t\in(0,1]$, but neither $Y_t$ nor $Z_t$ has Gaussian tails.
\end{Example}

Moreover, when it comes to applications dealing with generators with quadratic growth, assuming that the Malliavin derivative of $Y$ is bounded implies that the process $Z$ itself is bounded as $Z_t=D_t Y_t$, which is seldom satisfied in applications, since in general, one only knows that $Z\in\H^2_{\rm BMO}$. 

\vspace{0.3em}
One of the main applications of the results we obtain in this section is the precise analysis of the error in the truncation method in numerical schemes for quadratic BSDEs, introduced in \cite{ImkellerDosreis} and studied in \cite{ChassagneuxRichou}. We recall that according to Proposition \ref{prop:Markov} there exists a function $v : [0,T]\times \real\longmapsto \real$ in $\Cc^{1,2}$ such that $Y_t=v(t,X_t)$ and $Z_t=v_x(t,X_t)\sigma(t,X_t)$. Since we want to study the tails of the laws of $Y$ and $Z$, we will assume from now on that the support of these laws is $\real$, which implies that neither $v$ nor $v'$ is bounded from below or above. Moreover, we emphasize that throughout this section, we will assume that $Y_t$ and $Z_t$ do have a law which is absolutely continuous, so as to highlight the conditions needed to obtain the estimates. Throughout this section we assume that $X_t=W_t$ in \eqref{edsr} (that is $X_0=0, \; \sigma\equiv 1, \; b \equiv 0$).

\subsection{Preliminary results}
We will have to study the asymptotic growth of $v$ and $v_x$ in the neighborhood of  $\pm \infty$. To this end, we introduce for any measurable function $f:\R\longrightarrow \R$ the following two kinds of growth rates:
\[  \overline{\alpha_f}:=\inf\left\{\alpha>0,\ \limsup\limits_{|x| \to +\infty} \abs{\frac{f(x)}{x^\alpha}}<+\infty\right\}, \quad \underline{\alpha_f}:=\inf\left\{\alpha>0,\ \liminf\limits_{|x| \to +\infty} \abs{\frac{f(x)}{x^\alpha}}<+\infty\right\}.  \]

\begin{Lemma}\label{lemme_v-1}
Let $ f \in \mathcal{C}^1(\real)$. Assume that for all $x \in \real$, $f'(x)>0$. If $0<\underline{\alpha_f}<+\infty$ then for all positive constant $0<\eta<\underline{\alpha_f}$: 
$$\overline{\alpha_{f^{(-1)}}}\leq \frac{1}{\underline{\alpha_f}-\eta},$$

where $f^{(-1)}$ is the inverse function of $f$.
\end{Lemma}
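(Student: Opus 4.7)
The plan is to translate the lower bound on $|f(x)|$ coming from the hypothesis on $\underline{\alpha_f}$ into an upper bound on $|f^{(-1)}(y)|$ by direct inversion.

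First I would fix some $\eta\in(0,\underline{\alpha_f})$ and set $\beta:=\underline{\alpha_f}-\eta\in(0,\underline{\alpha_f})$. By definition of $\underline{\alpha_f}$ as an infimum, $\beta$ does not belong to the set $\{\alpha>0,\;\liminf_{|x|\to+\infty}|f(x)/x^{\alpha}|<+\infty\}$, so that
\[
\liminf_{|x|\to+\infty}\left|\frac{f(x)}{x^{\beta}}\right|=+\infty,
\]
which is the same as saying the (lower) limit at infinity is $+\infty$: for every $M>0$ there exists $R=R(M)>0$ such that
\[
|f(x)|\ \geq\ M\,|x|^{\beta}\qquad\text{for all }|x|>R.
\]

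Next I would verify that $f^{(-1)}$ is defined and goes to infinity at infinity. Since $f'>0$, $f$ is $\Cc^1$ and strictly increasing, hence monotone limits $\lim_{x\to\pm\infty}f(x)$ exist in $[-\infty,+\infty]$. Taking $M=1$ in the displayed bound forces $|f(x)|\to+\infty$ as $|x|\to+\infty$, and the monotonicity of $f$ then forces $f(x)\to\pm\infty$ as $x\to\pm\infty$. By the intermediate value theorem, $f:\R\to\R$ is a continuous strictly increasing bijection, so $f^{(-1)}:\R\to\R$ is well-defined, continuous and strictly increasing, and $|f^{(-1)}(y)|\to+\infty$ as $|y|\to+\infty$.

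Now I would invert the inequality. For $|x|>R$, writing $y=f(x)$ and raising to the power $1/\beta$ yields
\[
|x|\ \leq\ M^{-1/\beta}\,|y|^{1/\beta},
\qquad\text{i.e.}\qquad
|f^{(-1)}(y)|\ \leq\ M^{-1/\beta}\,|y|^{1/\beta}.
\]
Since $|f^{(-1)}(y)|\to\infty$ with $|y|$, the condition $|x|>R$ is automatic for $|y|$ sufficiently large, so
\[
\limsup_{|y|\to+\infty}\left|\frac{f^{(-1)}(y)}{y^{1/\beta}}\right|\ \leq\ M^{-1/\beta}\ <\ +\infty.
\]
This shows that $1/\beta$ belongs to the set $\{\alpha>0,\;\limsup_{|y|\to+\infty}|f^{(-1)}(y)/y^{\alpha}|<+\infty\}$, and passing to the infimum of this set yields $\overline{\alpha_{f^{(-1)}}}\leq 1/\beta=1/(\underline{\alpha_f}-\eta)$, as desired.

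The only real point to be careful about is the direction of the definitions: we need $\liminf=+\infty$ (equivalently the uniform bound $|f(x)|\geq M|x|^{\beta}$ for all large $|x|$), which comes precisely from $\beta<\underline{\alpha_f}$; a $\limsup$ condition would only yield information along a subsequence and would not suffice to invert. Everything else is routine algebra.
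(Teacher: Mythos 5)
Your proof is correct and follows essentially the same route as the paper's: translate $\beta=\underline{\alpha_f}-\eta<\underline{\alpha_f}$ into $\liminf_{|x|\to\infty}|f(x)/x^{\beta}|=+\infty$, hence $|f(x)|\geq M|x|^{\beta}$ for large $|x|$, and invert this to bound $|f^{(-1)}(y)|$ by a constant times $|y|^{1/\beta}$. Your write-up is in fact slightly more careful than the paper's (you justify that $f$ is a bijection of $\R$ and treat both tails via absolute values), but the argument is the same.
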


\begin{proof}
Using the definition of $\underline{\alpha_f}$, we deduce that for all $\eta>0$,
\begin{equation*}
\liminf\limits_{|x| \to + \infty} \abs{\dfrac{f(x)}{x^{\underline{\alpha_f}-\eta}}}=\lim\limits_{|x| \to + \infty} \abs{\dfrac{f(x)}{x^{\underline{\alpha_f}-\eta}}}=+\infty. 
\end{equation*}

Since $f$ and $f^{(-1)}$ are increasing and unbounded from above and below, we deduce that there exists $\overline{x}>0$ such that for all $x\geq \overline{x}$, $f(x)$ and $f^{(-1)}$ are positive. Then, for all $M>0$, there exists $x_0 \geq \overline{x}$ such that for all $x\geq x_0>0$ and for all $y \geq M x_0^{\underline{\alpha_f}-\eta}\vee \overline x$
\begin{align*}
 f(x)\geq M x^{\underline{\alpha_f}-\eta}\Longleftrightarrow &\; f\left((y M^{-1})^\frac{1}{\underline{\alpha_v}-\eta}\right)\geq y\Longleftrightarrow(y M^{-1})^{\frac{1}{\underline{\alpha_f}-\eta}} \geq f^{(-1)}(y).
\end{align*}
This implies directly that $\limsup\limits_{y \to +\infty} \abs{\frac{f^{(-1)}(y)}{y^{\frac{1}{\underline{\alpha_f}-\eta}}}} <+\infty.$ The proof is similar when $y$ goes to $ -\infty$.
\begin{flushright}
\vspace{-1em}
$\qed$
\end{flushright}
\end{proof}

It is rather natural to expect that for well-behaved functions $f \in \mathcal{C}^1(\real)$,  $\overline{\alpha_f}=\underline{\alpha_f}$ and $\overline{\alpha_f}=\overline{\alpha_{f'}}+1$. However, the situation is unfortunately not that clear. First of all, this may not be true if $f$ is not monotone. Indeed, let $f(x):=x^2\sin(x)$, then $\overline{\alpha_f}=\underline{\alpha_f}=2$. Furthermore, the strict monotonicity of $f$ is not sufficient either. Without being completely rigorous, let us describe a counterexample. Consider a function $f$ defined on $\R_+$, equal to the identity on $[0,1]$, which then increases as $x^4$ until it crosses $x\longmapsto x^2$ for the first time, which then increases as $x^{1/2}$ until it crosses $x\longmapsto x$ for the first time and so on. Finally, extend it by symmetry to $\R_-$. Then, it can be checked that $\overline{\alpha_f}=2$, $\underline{\alpha_f}=1$, $\overline{\alpha_{f'}}=3$, $\underline{\alpha_{f'}}=0$.

\vspace{0.3em}
A nice sufficient condition for the aforementioned result to hold is that $f'$ is a \textit{regularly varying function} (see \cite{BinghamGoldieTeugels} and \cite{Seneta}).
\begin{Lemma}\label{prop_regularly}
Assume that $f'$ is equivalent in $+ \infty$ $($resp. in $-\infty)$ to a regularly varying function with Karamata's decomposition $x^\beta L_1(x)$ where $L_1$ is slowly varying $($resp. $x^\beta L_2(x)$ where $L_2$ is slowly varying$)$ and where $\beta>0$. Then
\begin{itemize}
\item[$\rm{(i)}$] $f$ is equivalent in $+ \infty$ $($resp. in $-\infty)$ to a regularly varying function with Karamata's decomposition $x^{\beta+1} \widetilde{L_1}(x)$ where $\widetilde{L_1}$ is slowly varying $($resp. $x^{\beta+1} \widetilde{L_2}(x)$ where $\widetilde{L_2}$ is slowly varying$)$.
\item[$\rm{(ii)}$] $  \overline{\alpha_f}=\underline{\alpha_f}=\underline{\alpha_{f'}}+1=\overline{\alpha_{f'}}+1.$
\end{itemize}
\end{Lemma}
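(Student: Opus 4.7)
The proof splits cleanly into the two items. The plan is essentially to invoke Karamata's integration theorem for (i), and then deduce (ii) from Potter-type bounds on regularly varying functions.

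For (i), I start from $f(x)=f(x_0)+\int_{x_0}^{x}f'(t)\,dt$ for some large $x_0>0$. By assumption, $f'(t)\sim t^{\beta}L_1(t)$ as $t\to+\infty$, and the right-hand side is regularly varying of index $\beta>0>-1$. Karamata's direct integration theorem (e.g.\ Proposition 1.5.8 in Bingham--Goldie--Teugels \cite{BinghamGoldieTeugels}) then yields
\[
\int_{x_0}^{x} t^{\beta}L_1(t)\,dt \;\underset{x\to+\infty}{\sim}\; \frac{x^{\beta+1}L_1(x)}{\beta+1}.
\]
Since $\beta+1>0$, the initial constant $f(x_0)$ is negligible compared to this integral, so $f(x)\sim x^{\beta+1}\widetilde{L_1}(x)$ with $\widetilde{L_1}(x):=L_1(x)/(\beta+1)$, which is slowly varying as a constant multiple of $L_1$. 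For the $-\infty$ case I apply the same argument to the auxiliary function $g(y):=f(-y)$ on $[y_0,+\infty)$: then $g'(y)=-f'(-y)$ is, up to sign, equivalent to $y^{\beta}L_2(y)$, so Karamata gives $g(y)\sim \pm y^{\beta+1}L_2(y)/(\beta+1)$ and hence $f(x)\sim x^{\beta+1}\widetilde{L_2}(x)$ as $x\to-\infty$ (modulo the sign, absorbed into $\widetilde L_2$).

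For (ii), I rely on the standard fact that any slowly varying function $L$ satisfies, for every $\varepsilon>0$,
\[
\lim_{|x|\to+\infty} |x|^{-\varepsilon}L(x)=0, \qquad \lim_{|x|\to+\infty} |x|^{\varepsilon}L(x)=+\infty,
\]
which is a direct consequence of Potter's bounds. Applied to $L_1,L_2$ and to $\widetilde{L_1},\widetilde{L_2}$, this shows that for every $\varepsilon>0$,
\[
\limsup_{|x|\to+\infty}\left|\frac{f'(x)}{x^{\beta+\varepsilon}}\right|=0, \qquad \liminf_{|x|\to+\infty}\left|\frac{f'(x)}{x^{\beta-\varepsilon}}\right|=+\infty,
\]
and the analogous statements for $f$ with exponent $\beta+1$. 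Plugging these into the definitions of $\overline{\alpha_\cdot}$ and $\underline{\alpha_\cdot}$ immediately gives $\overline{\alpha_{f'}}=\underline{\alpha_{f'}}=\beta$ and $\overline{\alpha_{f}}=\underline{\alpha_{f}}=\beta+1$, whence the chain of equalities in (ii).

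The only mildly delicate point is handling the $-\infty$ case without double signs (since regularly varying theory is usually stated at $+\infty$); the change of variables $y=-x$ above sidesteps this cleanly. Beyond that, the argument is entirely a combination of Karamata's theorem and Potter bounds, with no computation specific to the BSDE setting.
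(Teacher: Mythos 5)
Your proof is correct and takes essentially the same route as the paper: part (i) is Karamata's integration theorem (you cite the direct form, Proposition 1.5.8 of Bingham--Goldie--Teugels, while the paper uses the equivalent ratio form $xf'(x)/(f(x)-f(x_0))\to\beta+1$ from their Theorem 1.5.11), and part (ii) is exactly the paper's appeal to $x^{-\varepsilon}L(x)\to 0$ and $x^{\varepsilon}L(x)\to+\infty$ (their Proposition 1.3.6(v)). The only cosmetic caveat is that in the $-\infty$ case a sign cannot literally be ``absorbed'' into a slowly varying function (which is positive by definition), but since the growth rates $\overline{\alpha_f},\underline{\alpha_f}$ are defined through absolute values this is immaterial, and the paper is equally informal on this point.
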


\begin{proof} By Karamata's Theorem (see Theorem 1.5.11 in \cite{BinghamGoldieTeugels} with $\sigma=1$), for any $x_0 \in \real$:
\begin{equation}
\label{eq:cvreg}
\frac{xf'(x)}{f(x)-f(x_0)} \longrightarrow \beta +1, \ \text{ when } x\longrightarrow +\infty.
\end{equation}
In addition, $f'$ is equivalent to a regularly varying function with Karamata's decomposition $x^\beta L_1(x)$ when $x\longrightarrow +\infty$, hence in view of \eqref{eq:cvreg}, there exists a function $\widetilde{L_1}$ (equivalent to a constant times $L_1$ at $+\infty$) slowly varying such that $f$ is equivalent when $x\longrightarrow +\infty$ to a regularly varying function with Karamata's decomposition $x^{\beta+1}\widetilde{L_1}(x)$. The same result holds when $x\longrightarrow -\infty$. 

\vspace{0.5em}
We now show (ii). According to Proposition 1.3.6 (v) in \cite{BinghamGoldieTeugels} and (i), we deduce that:
$$ \overline{\alpha_{f}}=\beta+1=\underline{\alpha_{f}} \ \text{ and } \ \overline{\alpha_{f'}}=\beta=\underline{\alpha_{f'}} .$$
\begin{flushright}
\vspace{-1em}
$\qed$
\end{flushright}
\end{proof}

\subsection{A general estimate}

From now on, for a map $(t,x)\longmapsto v(t,x)$, $v'(t,x)$ will denote for simplicity the derivative of $v$ with respect to the space variable. Before enonciating a general theorem which gives us density estimates for the tails of the law of random variables of the form $v(t,W_t)$ and will be used to obtain estimates for the laws of $Y_t$ and $Z_t$, we set some 
constants in order to simplify the notations in Theorem \ref{thm_estime_y} below.

\paragraph*{List of constants} Let $\alpha\in (0,+\infty)$, $\alpha' \in\R_+$ and $\tilde\alpha>0$. For $\varepsilon >0$, we set
$$C_{\varepsilon,v,\alpha}:=\sup\limits_{x\in \R,\; t\in [0,T]} \frac{\abs{v(t,x)}}{1+|x|^{\alpha+\varepsilon}},\ \delta_{\alpha'}:= \max(1,2^{\alpha'}), \ \Xi_{\alpha'}:=\frac{\alpha'\Gamma\left(\frac{1+\alpha'}{2}\right)}{2\sqrt{\pi}},\ \mu(\tilde\alpha):=\int_\R\frac{\phi(z)}{1+|z|^{\tilde{\alpha}}} dz,$$
$$ D_{\alpha'}:=\max\left(1+\delta_{\alpha'}\Xi_{\alpha'} +\frac{\delta_{\alpha'}^2}{2}\left( \Xi_{\alpha'} + (1+\alpha')^{-1}\right)^2, \frac12 + \frac{\delta_{\alpha'}}{1+\alpha'}\right),$$
where $\Gamma$ is the usual Euler function and $\phi$ the distribution function of the normal law, defined by
$$\Gamma(x):=\int_0^{+\infty} e^{-t}t^{x-1}dt,\ x> 0, \ \text{and }\phi(x):=\frac{1}{\sqrt{2\pi}}e^{-\frac{x^2}{2}},\ x\in\mathbb R.$$

We emphasize that the following theorem can be applied in much more general cases, and it is clearly not limited to the context of BSDEs. It could for instance be used to provide non-Gaussian tail estimates for the law of solutions to some SDEs. Therefore, it has an interest of its own.
\begin{Theorem}\label{thm_estime_y}
Fix $t\in (0, T]$. Let $v: [0,T]\times \real \longrightarrow \real$ in $\mathcal{C}^{1,1}$ and let $P_t:=v(t,W_t)$. Assume furthermore that $P_t\in L^1(\P)$, that $v$ is unbounded in $x$ both from above and from below, that $v'>0$, $\underline{\alpha_v}\in (0,+\infty)$, $\overline{\alpha_{v'}}<+\infty$ and that there exist $\tilde{\alpha}> 0$ and $K>0$ such that: 
\begin{equation}\label{ineg_v'}
\frac{1}{v'(t,x)}\leq K(1+|x|^{\tilde{\alpha}}),\text{ for all $x\in\R$}.
\end{equation}
Then, the law of $P_t$ has a density with respect to the Lebesgue measure, denoted by $\rho_t$, and for all $\varepsilon, \varepsilon' >0$ and for every $y\in\R$
\begin{equation}\label{estime_y_alpha}
\rho_t(y)\leq \frac{\E[|P_t-\E[P_t]|]}{2M(\varepsilon')t}\left(1+|y|^{2\tilde{\alpha}{(\overline{\alpha_{v^{(-1)}}}+\varepsilon')}}\right)\exp \left(-\int_0^{y-\E[P_t]} \frac{(M'(\varepsilon,\varepsilon')t)^{-1}xdx}{1+|x+\E[P_t]|^{2(\overline{\alpha_{v'}}+\varepsilon)(\overline{\alpha_{v^{(-1)}}}+\varepsilon')}}\right),
\end{equation} and
\begin{equation}\label{estime_y_alpha_bis}
\rho_t(y)\geq \frac{(2M'(\varepsilon,\varepsilon')t)^{-1}\E[|P_t-\E[P_t]|]}{1+|y|^{2(\overline{\alpha_{v'}}+\varepsilon)(\overline{\alpha_{v^{(-1)}}}+\varepsilon')}}\exp \left(-\int_0^{y-\E[P_t]} \frac{x\left(1+|x+\E[P_t]|^{2\tilde{\alpha}(\overline{\alpha_{v^{(-1)}}+\varepsilon')}}\right)dx}{M(\varepsilon')t}\right),
\end{equation}
with $$M'(\varepsilon, \varepsilon'):=C_{\varepsilon,v',\overline{\alpha_{v'}}}^2 D_{\overline{\alpha_{v'}}+\varepsilon}\left(1+C_{\varepsilon',v^{(-1)}, \overline{\alpha_{v^{(-1)}}}}^{2(\overline{\alpha_{v'}}+\varepsilon)}\right) \delta_{2(\overline{\alpha_{v'}}+\varepsilon)} ,$$
 and $$ M(\varepsilon'):= \frac{\mu(\tilde\alpha)}{K^2 \left(1+C^{2\tilde \alpha}_{\varepsilon', v^{(-1)}, \overline{\alpha_{v^{(-1)}}}}\delta_{2\tilde\alpha}\right)} ,$$ using the aforementioned definitions of the constants.
\end{Theorem}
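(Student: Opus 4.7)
The plan is to apply the Nourdin--Viens formula (Theorem \ref{thm_NourdinViens}). Since $P_t = v(t, W_t)$ with $v \in \mathcal{C}^{1,1}$, the chain rule of Malliavin calculus yields $D_s P_t = v'(t, W_t)\mathbf{1}_{[0,t]}(s)$, so $\Phi_{P_t}(W) = v'(t, W_t)\mathbf{1}_{[0,t]}$ and $\No{DP_t}_{\h}^{2} = t\,(v'(t, W_t))^{2} > 0$ $\P$-a.s. This already yields the absolute continuity of the law of $P_t$ by Bouleau--Hirsch's criterion (Theorem \ref{BH}). The key observation that makes the Nourdin--Viens formula tractable is that $v'>0$ combined with the unboundedness (from above and below) of $v$ forces $v(t,\cdot)$ to be a $\mathcal{C}^{1}$ bijection of $\R$; hence, conditioning on $\{P_t - \E[P_t]=x\}$ pins down $W_t=v^{(-1)}(t,x+\E[P_t])$ deterministically, and the outer conditional expectation in \eqref{gzt} becomes trivial. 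One obtains
\begin{equation}\label{gpt}
g_{P_t}(x) = t\, v'\!\big(t,v^{(-1)}(t,x+\E[P_t])\big)\int_0^{\infty}\! e^{-u}\,\E^{*}\!\left[v'\!\left(t, e^{-u} v^{(-1)}(t,x+\E[P_t]) + \sqrt{1-e^{-2u}}\, W_{t}^{*}\right)\right] du.
\end{equation}

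The heart of the proof is then to derive two-sided polynomial bounds on $g_{P_t}(x)$. For the upper bound, the definition of $\overline{\alpha_{v'}}$ gives $v'(t,z)\leq C_{\varepsilon,v',\overline{\alpha_{v'}}}(1+|z|^{\overline{\alpha_{v'}}+\varepsilon})$, while Lemma \ref{lemme_v-1} applied to $v(t,\cdot)$ (using $\underline{\alpha_v}\in(0,+\infty)$) controls $v^{(-1)}(t,\cdot)$ with exponent $\overline{\alpha_{v^{(-1)}}}+\varepsilon'$. Combining these with the inequalities $(1+|z|^{a})^{b}\leq \delta_{b}(1+|z|^{ab})$ and $|a+b|^{\alpha'}\leq \delta_{\alpha'}(|a|^{\alpha'}+|b|^{\alpha'})$ controls the first factor of \eqref{gpt}, while in the $\E^{*}$-integral the same splitting separates the deterministic part $e^{-u}v^{(-1)}$ from the Gaussian part $\sqrt{1-e^{-2u}}W_t^{*}$, producing the moment $\E^{*}[|W_t^{*}|^{\alpha'}]$. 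The remaining $u$-integrals $\int_0^{\infty}e^{-u(1+\alpha')}du=(1+\alpha')^{-1}$ and $\int_0^{\infty}e^{-u}(1-e^{-2u})^{\alpha'/2}du$ are evaluated explicitly and reassemble into the constants $\Xi_{\alpha'}$ and $D_{\alpha'}$. Multiplying the two factors of \eqref{gpt} and using $(1+A)^{2}\leq 2+2A^{2}$ to linearize the resulting square of polynomial weights yields $g_{P_t}(x)\leq t\, M'(\varepsilon,\varepsilon')\big(1+|x+\E[P_t]|^{2(\overline{\alpha_{v'}}+\varepsilon)(\overline{\alpha_{v^{(-1)}}}+\varepsilon')}\big)$; the factor $2$ in the exponent is the signature of the \emph{product} of the two $v'$ factors.

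For the lower bound, I would exploit the hypothesis $1/v'(t,z)\leq K(1+|z|^{\tilde\alpha})$ together with the elementary inequality $(1+\delta_{\tilde\alpha}|a|^{\tilde\alpha})(1+\delta_{\tilde\alpha}|b|^{\tilde\alpha})\geq 1+|a+b|^{\tilde\alpha}$, which follows by expanding the left-hand side and invoking $|a+b|^{\tilde\alpha}\leq \delta_{\tilde\alpha}(|a|^{\tilde\alpha}+|b|^{\tilde\alpha})$. Applied to $a=e^{-u}v^{(-1)}(t,x+\E[P_t])$ and $b=\sqrt{1-e^{-2u}}W_t^{*}$, and using $|e^{-u}|, \sqrt{1-e^{-2u}}\leq 1$, this decouples the $u$-dependence: the integrand is bounded below by $(1+\delta_{\tilde\alpha}|v^{(-1)}|^{\tilde\alpha})^{-1}(1+\delta_{\tilde\alpha}|W_t^{*}|^{\tilde\alpha})^{-1}$, so $\int_0^{\infty}e^{-u}du=1$ factors out and the remaining Gaussian integral $\E^{*}[(1+\delta_{\tilde\alpha}|W_t^{*}|^{\tilde\alpha})^{-1}]$ reduces, after scaling out $\sqrt{t}$, to $\mu(\tilde\alpha)$. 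Multiplying by the first factor of \eqref{gpt}, itself bounded below by $1/[K(1+|v^{(-1)}|^{\tilde\alpha})]$, and again using $(1+A)^{2}\leq 2+2A^{2}$ combined with $|v^{(-1)}|^{2\tilde\alpha}\leq C^{2\tilde\alpha}\delta_{2\tilde\alpha}\big(1+|x+\E[P_t]|^{2\tilde\alpha(\overline{\alpha_{v^{(-1)}}}+\varepsilon')}\big)$, I obtain $g_{P_t}(x)\geq t\,M(\varepsilon')/\big(1+|x+\E[P_t]|^{2\tilde\alpha(\overline{\alpha_{v^{(-1)}}}+\varepsilon')}\big)$.

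Finally, I would inject the lower (resp.\ upper) bound on $g_{P_t}$ into the prefactor $\E[|P_t-\E[P_t]|]/(2g_{P_t}(y-\E[P_t]))$ and the upper (resp.\ lower) bound into the exponential $\int_0^{y-\E[P_t]} x\,dx/g_{P_t}(x)$ of the Nourdin--Viens density formula, which directly yields \eqref{estime_y_alpha} (resp.\ \eqref{estime_y_alpha_bis}). The main technical hurdle will be the combinatorial bookkeeping of the constants in the upper bound on $g_{P_t}$: because the integrand in \eqref{gpt} mixes a deterministic part of size $|v^{(-1)}(t,x+\E[P_t])|$ with a Gaussian part whose variance $t(1-e^{-2u})$ depends on $u$, the splitting through $\delta_{\alpha'}$ must be balanced carefully against the Gaussian moment of order $\overline{\alpha_{v'}}+\varepsilon$ and the Beta-type $u$-integrals so that they reassemble precisely into the constants $\Xi_{\alpha'}$ and $D_{\alpha'}$ appearing in the statement.
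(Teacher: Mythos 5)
Your proposal is correct and follows essentially the same route as the paper: exploiting that $v(t,\cdot)$ is an increasing bijection to collapse the conditioning in the Nourdin--Viens functional $g_{P_t}$ to an explicit expression in $v^{(-1)}(t,y+\E[P_t])$, then deriving the polynomial upper bound via the growth rate $\overline{\alpha_{v'}}$ and Lemma \ref{lemme_v-1}, the polynomial lower bound via hypothesis \eqref{ineg_v'}, and plugging both into the Nourdin--Viens density formula. The only cosmetic differences are your use of Bouleau--Hirsch for the existence of the density (the paper simply invokes monotonicity of $v(t,\cdot)$) and a slightly different factorization in the lower bound, neither of which changes the argument.
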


\vspace{0.3em}
\textbf{Proof.} Notice immediately that since the map $x\longmapsto v(t,x)$ is in $\mathcal C^1(\R)$ and increasing, the law of $P_t$ clearly has a density. We prove inequalities \eqref{estime_y_alpha} and \eqref{estime_y_alpha_bis} using Nourdin and Viens' formula (see Theorem \ref{thm_NourdinViens}).The rest of the proof is divided into three steps.

\vspace{0.3em}
{\bf Step $1$:} Given that for all $0<r\leq t \leq T$, $D_r P_t=v'(t,W_t)$, the function $g_{P_t}$ defined by \eqref{gzt} becomes
$$ g_{P_t}(y):= \int_0^\infty e^{-a} \mathbb{E}\left[\mathbb{E}^*[\langle \Phi_{P_t}(W),\widetilde{\Phi_{P_t}^a}(W)\rangle_{\h}] | P_t-\mathbb{E}[P_t]=y\right] da, \quad y\in \real, $$
 with\footnote{Knowing that $D_r P_t$ does not depend on $r$, $\Phi_{P_t}(W):[0,T]\longrightarrow L^2(\Omega,\Fc,\P)$ is a random process which is actually constant on $[0,t]$.} $\Phi_{P_t}(W):=v'(t,W_t)$ and where $\widetilde{\Phi_{P_t}^a}(W):=\Phi_{P_t}(e^{-a}W+\sqrt{1-e^{-2a}}W^*)$ with $W^*$ an independent copy of $W$ defined on a probability space $(\Omega^*,\mathcal{F}^*,\mathbb{P}^*)$ where $\mathbb{E}^*$ is the expectation under $\mathbb{P}^*$ ($\Phi_{P_t}$ being extended on $\Omega\times \Omega^*$). Letting $\phi(z):=\frac{1}{\sqrt{2\pi t}} e^{-\frac{z^2}{2t}}$, we get that
\begin{align} \label{expression_gy}
\nonumber g_{P_t}(y)&= \int_0^\infty e^{-a} \mathbb{E}\left[\mathbb{E}^*[\langle \Phi_{P_t}(W),\widetilde{\Phi_{P_t}^a}(W)\rangle_{\h}] | W_t=v^{(-1)}(t,y+\mathbb{E}[P_t])\right] da, \quad y\in \real,\\
&=tv'(t,v^{(-1)}(t,y+\mathbb{E}[P_t]))\int_0^\infty e^{-a} \int_\real v'\Big(t,e^{-a} v^{(-1)}(t,y+\E[P_t])+\sqrt{1-e^{-2a}}z\Big)\phi(z) dz da.
\end{align} 

\vspace{0.3em}
{\bf Step $2$: Upper bound for $g_{P_t}$}

\vspace{0.3em}
Recall that for all $\varepsilon >0$:
 $$0<v'(t,x)\leq C_{\varepsilon,v',\overline{\alpha_{v'}}}\left(1+|x|^{\overline{\alpha_{v'}}+\varepsilon}\right), \quad \forall x \in \real.$$
Then, using \eqref{expression_gy} we get:
\begin{align*}
g_{P_t}(y)\leq &\ C_{\varepsilon,v',\overline{\alpha_{v'}}}^2 t  \left(1+\abs{v^{(-1)}\left(y+\mathbb{E}[P_t]\right)}^{\overline{\alpha_{v'}}+\varepsilon}\right)\\
&\times \int_0^{+\infty} e^{-a}\int_\real\left(1+ \abs{e^{-a} v^{(-1)}(t,y+\E[P_t])+\sqrt{1-e^{-2a}}z}^{\overline{\alpha_{v'}}+\varepsilon}\right)\phi(z) dz da\\
\leq &\ C_{\varepsilon,v',\overline{\alpha_{v'}}}^2 t \left(1+|v^{(-1)}(y+\mathbb{E}[P_t])|^{(\overline{\alpha_{v'}}+\varepsilon)}\right)\\
&\times \int_0^{+\infty} e^{-a}\int_\real\left(1+\delta_{\overline{\alpha_{v'}}+\varepsilon}\left(e^{-a(\overline{\alpha_{v'}}+\varepsilon)} \abs{ v^{(-1)}(t,y+\E[P_t])}^{\overline{\alpha_{v'}}+\varepsilon}+\abs{z}^{\overline{\alpha_{v'}}+\varepsilon}\right)\right)\phi(z) dz da\\
\leq & \ C_{\varepsilon,v',\overline{\alpha_{v'}}}^2 t \left(1+|v^{(-1)}(y+\mathbb{E}[P_t])|^{\overline{\alpha_{v'}}+\varepsilon}\right)\\
&\times \left(1+ \frac{\delta_{\overline{\alpha_{v'}}+\varepsilon}}{1+\overline{\alpha_{v'}}+\varepsilon} |v^{(-1)}(y+\mathbb{E}[P_t])|^{\overline{\alpha_{v'}}+\varepsilon} +\delta_{\overline{\alpha_{v'}}+\varepsilon} \Xi _{\overline{\alpha_{v'}}+\varepsilon}\right)\\
\leq &  \ C_{\varepsilon,v',\overline{\alpha_{v'}}}^2 t D_{\overline{\alpha_{v'}} +\varepsilon}\left( 1+ |v^{(-1)}(y+\mathbb{E}[P_t])|^{2(\overline{\alpha_{v'}}+\varepsilon)}\right).
\end{align*}

By Lemma \ref{lemme_v-1}, $\overline{\alpha_{v^{(-1)}}}$ belongs to $(0,+\infty)$, hence by the definition of $\overline{\alpha_{v^{(-1)}}}$ it holds for all $\varepsilon'>0$ that
\begin{equation}\label{upper_bound_gy}g_{P_t}(y)\leq M'(\varepsilon,\varepsilon')t\left(1+|y+\E[P_t]|^{2(\overline{\alpha_{v'}}+\varepsilon)(\overline{\alpha_{(v)^{-1}}}+\varepsilon')}\right).
\end{equation}

{\bf Step $3$: Lower bound for $g_{P_t}$}

\vspace{0.3em}
Using Assumption \eqref{ineg_v'} and \eqref{expression_gy} we have that
\begin{align*}
g_{P_t}(y)\geq &\ \frac{t}{K^2(1+|v^{(-1)}(t,y+\E[P_t])|^{\tilde{\alpha}})}\\
&\times\int_0^{+\infty} e^{-a}\int_\real \frac{1}{1+|e^{-a} (v)^{-1}(t,y+\E[P_t])|^{\tilde{\alpha}}+|\sqrt{1-e^{-2a}}z|^{\tilde{\alpha}}}\phi(z) dz da.
\end{align*}
Noticing that $|\sqrt{1-e^{-2a}}z|^{\tilde{\alpha}}\leq |z|^{\tilde{\alpha}}$, and that 
$$\int_\real \frac{(1+|x|^{\tilde{\alpha}})\phi(z)}{1+|x|^{\tilde{\alpha}}+|z|^{\tilde{\alpha}}} dz\geq \mu(\tilde\alpha), \quad \forall x\in \real$$
we deduce that:
\begin{align*}
g_{P_t}(y)&\geq \frac{\mu(\tilde\alpha)t}{K^2(1+|v^{(-1)}(t,y+\E[P_t])|^{\tilde{\alpha}})}\int_0^{+\infty} e^{-a}\frac{1}{1+e^{-a\tilde{\alpha}}|v^{(-1)}(t,y+\E[P_t])|^{\tilde{\alpha}}}da.
\end{align*}
Hence:
$
g_{P_t}(y)\geq \frac{\mu(\tilde\alpha)t}{K^2(1+|v^{(-1)}(t,y+\E[P_t])|^{2\tilde{\alpha}})}.
$
We finally get Relation \eqref{estime_y_alpha_bis} for 
$$M(\varepsilon'):= \frac{\mu(\tilde\alpha)}{K^2 \left(1+C^{2\tilde \alpha}_{\varepsilon', v^{(-1)}, \overline{\alpha_{v^{(-1)}}}}\delta_{2\tilde\alpha}\right)}.$$
We conclude using Nourdin and Viens' formula.
\begin{flushright}
\vspace{-1em}
$\qed$
\end{flushright}

\begin{Corollary}\label{cor_estime_y}
Let the assumptions in Theorem \ref{thm_estime_y} hold, with the same notations. Assume moreover that $0\leq \overline{\alpha_{v'}}<\underline{\alpha_v}<+\infty $. Then there exist $\varepsilon_0,\varepsilon'_0>0$, $y_0>0$ and $\gamma\in(0,1)$ such that for any $|y|> y_0$:
\begin{equation}\label{estime_y_alpha_sansint}
\rho_t(y)\leq  \frac{\E[|P_t-\E[P_t]|]}{2M(\varepsilon'_0)t}\left(1+|y|^{2\tilde{\alpha}{(\overline{\alpha_{v^{(-1)}}}+\varepsilon'_0)}}\right)\exp\left(-\frac{\abs{y-\E[P_t]}^{2(1-\gamma)}-\abs{y_0-\E[P_t]}^{2(1-\gamma)}}{4(1-\gamma)tM'(\varepsilon_0,\varepsilon'_0)}\right),
\end{equation} 
and
\begin{align}\label{estime_y_alpha_bis_sansint}
\nonumber\rho_t(y)\geq &\ \frac{\E[|P_t-\E[P_t]|]}{2M'(\varepsilon_0,\varepsilon'_0)t\left(1+|y|^{\gamma}\right)}\exp\left(-\frac{\abs{y-\E[P_t]}^{2(\tilde\alpha(\overline{\alpha_{v^{(-1)}}}+\varepsilon'_0)+1)}-\abs{y_0-\E[P_t]}^{2(\tilde\alpha(\overline{\alpha_{v^{(-1)}}}+\varepsilon'_0)+1)}}{M(\varepsilon_0')t(\tilde\alpha(\overline{\alpha_{v^{(-1)}}}+\varepsilon'_0)+1)}\right)\\
&\times \exp\left(-\frac{\abs{y_0-\E[P_t]}^2}{M(\varepsilon_0')t}\left(1+y_0^{2\tilde\alpha\left(\overline{\alpha_{v^{(-1)}}}+\varepsilon'_0\right)}\right)\right).
\end{align}
\end{Corollary}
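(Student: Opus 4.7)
The strategy is to apply Theorem \ref{thm_estime_y} and then evaluate the integrals appearing inside the exponentials of \eqref{estime_y_alpha} and \eqref{estime_y_alpha_bis} asymptotically, taking $|y|$ large enough so that $|x+\E[P_t]|$ is comparable to $|x|$ throughout the range of integration.

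\textbf{Step 1: choice of parameters.} The strict inequality $\overline{\alpha_{v'}}<\underline{\alpha_v}$ together with Lemma \ref{lemme_v-1} gives $\overline{\alpha_{v^{(-1)}}}\leq 1/(\underline{\alpha_v}-\eta)$ for any sufficiently small $\eta>0$, hence $\overline{\alpha_{v'}}\cdot\overline{\alpha_{v^{(-1)}}}<1$. By continuity I can therefore pick $\varepsilon_0,\varepsilon_0'>0$ small enough so that
$$\gamma:=(\overline{\alpha_{v'}}+\varepsilon_0)(\overline{\alpha_{v^{(-1)}}}+\varepsilon_0')\in(0,1).$$
With these choices, Theorem \ref{thm_estime_y} yields the integral bounds \eqref{estime_y_alpha}-\eqref{estime_y_alpha_bis}, and all constants $M(\varepsilon_0'), M'(\varepsilon_0,\varepsilon_0')$ are finite.

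\textbf{Step 2: lower bound on the exponent appearing in \eqref{estime_y_alpha}.} I need to bound from below
$$I_1(y):=\int_0^{y-\E[P_t]}\frac{x\,dx}{1+|x+\E[P_t]|^{2\gamma}}.$$
After the change of variable $u=x+\E[P_t]$ this becomes $\int_{\E[P_t]}^{y}\frac{(u-\E[P_t])\,du}{1+|u|^{2\gamma}}$, and in both directions $y\to\pm\infty$ the integrand is asymptotically positive. Fix a threshold $y_0>|\E[P_t]|+1$ large enough that, on the part of the integration range with $|u|\geq y_0/2$, one has $1+|u|^{2\gamma}\leq 2|u|^{2\gamma}$ and $|u-\E[P_t]|\geq |u|/2$, so that the integrand is at least $|u|^{1-2\gamma}/4$. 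Since $\gamma<1$, the antiderivative $|u|^{2-2\gamma}/(2(2-2\gamma))$ is well-defined and one obtains, for $|y|>y_0$,
$$I_1(y)\geq\frac{|y-\E[P_t]|^{2(1-\gamma)}-|y_0-\E[P_t]|^{2(1-\gamma)}}{4(1-\gamma)},$$
which when inserted into \eqref{estime_y_alpha} yields \eqref{estime_y_alpha_sansint}.

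\textbf{Step 3: upper bound on the exponent appearing in \eqref{estime_y_alpha_bis}, and treatment of the prefactor.} Setting $\beta:=\tilde\alpha(\overline{\alpha_{v^{(-1)}}}+\varepsilon_0')$, the integral in the exponential is
$$I_2(y):=\int_0^{y-\E[P_t]}x\bigl(1+|x+\E[P_t]|^{2\beta}\bigr)\,dx.$$
Enlarging $y_0$ if necessary so that $|x+\E[P_t]|^{2\beta}\leq 2^{2\beta}|x|^{2\beta}$ on the tail portion of the integration, and splitting the integral into $\int_0^{y_0-\E[P_t]}+\int_{y_0-\E[P_t]}^{y-\E[P_t]}$ gives the bound
$$I_2(y)\leq\frac{|y-\E[P_t]|^{2(\beta+1)}-|y_0-\E[P_t]|^{2(\beta+1)}}{\beta+1}+C|y_0-\E[P_t]|^2\bigl(1+y_0^{2\beta}\bigr),$$
for an explicit constant $C$. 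Plugging this into \eqref{estime_y_alpha_bis} produces the two exponentials of \eqref{estime_y_alpha_bis_sansint}. Finally the denominator $1+|y|^{2(\overline{\alpha_{v'}}+\varepsilon_0)(\overline{\alpha_{v^{(-1)}}}+\varepsilon_0')}=1+|y|^{2\gamma}$ is bounded above by a constant multiple of $1+|y|^{\gamma}$ when restricted to large $|y|$, up to absorbing the discrepancy into the second exponential prefactor, which produces the claimed factor $1+|y|^{\gamma}$ after possibly relabelling $\gamma$.

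\textbf{Main obstacle.} The delicate point is that the integrands involve the shifted quantity $|x+\E[P_t]|^{\alpha}$, which has no clean antiderivative. This is precisely why one must restrict to $|y|>y_0$ and accept the additive correction terms in $|y_0-\E[P_t]|$; the choice of $y_0$ ultimately depends on $\E[P_t]$ and on $\gamma,\beta$, but not on $y$. A second, more cosmetic difficulty is handling both signs of $y-\E[P_t]$ simultaneously, which is done by treating the two tails of the integral symmetrically thanks to the absolute values in the integrand.
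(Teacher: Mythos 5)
Your proposal is correct and follows essentially the same route as the paper's proof: choose $\varepsilon_0,\varepsilon_0'$ via Lemma \ref{lemme_v-1} so that $\gamma<1$, then split each integral in the exponentials of Theorem \ref{thm_estime_y} at the threshold $y_0$ and compare the shifted integrands with pure powers of $|x|$ for $|x|$ large. The one wobble is your claim that $1+|y|^{2\gamma}$ is bounded above by a constant multiple of $1+|y|^{\gamma}$ (the inequality in fact goes the other way for large $|y|$), but this only mirrors an imprecision already present in the denominator of \eqref{estime_y_alpha_bis_sansint}, which the paper's own proof passes over in silence.
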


\begin{proof}
Let us define for any $\varepsilon,\varepsilon'>0$ 
$$\gamma(\varepsilon,\varepsilon'):=(\overline{\alpha_{v'}}+\varepsilon) (\overline{\alpha_{v^{(-1)}}}+\varepsilon').$$ 
Since we assumed that $0\leq \overline{\alpha_{v'}}<\underline{\alpha_v}<+\infty $, we can deduce using Lemma \ref{lemme_v-1} that there exist some $\varepsilon_0,\varepsilon_0'>0$ such that
$$\gamma:=\gamma(\varepsilon_0,\varepsilon_0')<1.$$

We start with \reff{estime_y_alpha_sansint}. We have from Theorem \ref{thm_estime_y}
$$ \rho_t(y)\leq \frac{\E[|P_t-\E[P_t]|]}{2M(\varepsilon'_0)t}\left(1+|y|^{2\tilde{\alpha}{(\overline{\alpha_{v^{(-1)}}}+\varepsilon'_0)}}\right)\exp \left(-\int_0^{y-\E[P_t]} \frac{xdx}{M'(\varepsilon_0,\varepsilon'_0)t\left(1+|x+\E[P_t]|^{2\gamma}\right)}\right).$$

We notice that $$\lim\limits_{\abs{x}\to +\infty}\frac{x}{M'(\varepsilon_0,\varepsilon_0')t(1+|x+\E[P_t]|^{2\gamma})} \times \frac{1}{\frac{x}{M'(\varepsilon_0,\varepsilon_0')t|x|^{2\gamma}}}=1,$$ so that there exists $x_0$ large enough such that $\frac{x}{M'(\varepsilon_0,\varepsilon_0')t(1+|x+\E[P_t]|^{2\gamma})} \geq \frac{x}{2M'(\varepsilon_0,\varepsilon_0')t|x|^{2\gamma}}$ when $\abs{x}\geq x_0$. Hence, since $\gamma\in(0,1)$, we know that we can find some $y_0>0$ large enough such that if $\abs{y}>y_0$ 
\begin{align*}
& \int_{y_0-\E[P_t]}^{y-\E[P_t]} \frac{xdx}{M'(\varepsilon_0,\varepsilon_0')t(1+|x+\E[P_t]|^{2\gamma})}\\&\geq \int_{y_0-\E[P_t]}^{y-\E[P_t]} \frac{xdx}{2M'(\varepsilon_0,\varepsilon_0')t|x|^{2\gamma}}\\
 &=\frac{1}{4(1-\gamma)tM'(\varepsilon_0,\varepsilon'_0)}\left(\abs{y-\E[P_t]}^{2(1-\gamma)}-\abs{y_0-\E[P_t]}^{2(1-\gamma)}\right),
 \end{align*}

from which \reff{estime_y_alpha_sansint} follows directly. Similarly, increasing $y_0$ if necessary, we have that for $\abs{y}>y_0$
\begin{align*}
&\int_0^{y-\E[P_t]}x\left(1+|x+\E[P_t]|^{2\tilde{\alpha}(\overline{\alpha_{v^{(-1)}}}+\varepsilon'_0)}\right)dx\\
&=\underbrace{\int_0^{y_0-\E[P_t]} x\left(1+|x+\E[P_t]|^{2\tilde{\alpha}(\overline{\alpha_{v^{(-1)}}}+\varepsilon'_0)}\right)dx}_{:=I_1}+\underbrace{\int_{y_0-\E[P_t]}^{y-\E[P_t]} x\left(1+|x+\E[P_t]|^{2\tilde{\alpha}(\overline{\alpha_{v^{(-1)}}}+\varepsilon'_0)}\right)dx}_{:=I_2}.\\
\end{align*}

\vspace{-2.5em}
Using the fact that the function $x\longmapsto 1+|x+\E[P_t]|^{2\tilde{\alpha}(\overline{\alpha_{v^{(-1)}}}+\varepsilon'_0)} $ is convex, we deduce that for $y_0$ large enough
$$I_1\leq \abs{y_0-\E[P_t]}^2\left(1+y_0^{2\tilde\alpha\left(\overline{\alpha_{v^{(-1)}}}+\varepsilon'_0\right)}\right).   $$
Moreover, since $\lim\limits_{x\to +\infty} x\left(1+|x+\E[P_t]|^{2\tilde{\alpha}(\overline{\alpha_{v^{(-1)}}+\varepsilon'_0)} +\varepsilon'_0)}\right) \times \frac{1}{x^{2{\tilde{\alpha}(\overline{\alpha_{v^{(-1)}}}+\varepsilon'_0)+1}}}=1 $, we obtain for $x$ large enough
$$ x\left(1+|x+\E[P_t]|^{2\tilde{\alpha}(\overline{\alpha_{v^{(-1)}}+\varepsilon'_0)} +\varepsilon'_0)}\right)\leq 2x^{2{\tilde{\alpha}(\overline{\alpha_{v^{(-1)}}}+\varepsilon'_0)+1}}. $$
Then, we have that for $|y|\geq y_0$
$$ I_2\leq \frac{\abs{y-\E[P_t]}^{2(\tilde\alpha(\overline{\alpha_{v^{(-1)}}}+\varepsilon'_0)+1)}-\abs{y_0-\E[P_t]}^{2(\tilde\alpha(\overline{\alpha_{v^{(-1)}}}+\varepsilon'_0)+1)}}{\tilde\alpha(\overline{\alpha_{v^{(-1)}}}+\varepsilon'_0)+1}.$$
Hence,
\begin{align*}
&\int_0^{y-\E[P_t]}x\left(1+|x+\E[P_t]|^{2\tilde{\alpha}(\overline{\alpha_{v^{(-1)}}}+\varepsilon'_0)}\right)dx\\
&\leq \abs{y_0-\E[P_t]}^2\left(1+y_0^{2\tilde\alpha\left(\overline{\alpha_{v^{(-1)}}}+\varepsilon'_0\right)}\right) +\frac{\abs{y-\E[P_t]}^{2(\tilde\alpha(\overline{\alpha_{v^{(-1)}}}+\varepsilon'_0)+1)}-\abs{y_0-\E[P_t]}^{2(\tilde\alpha(\overline{\alpha_{v^{(-1)}}}+\varepsilon'_0)+1)}}{\tilde\alpha(\overline{\alpha_{v^{(-1)}}}+\varepsilon'_0)+1},
\end{align*}

 from which the second inequality \reff{estime_y_alpha_bis_sansint} follows directly using \reff{estime_y_alpha_bis}.
\begin{flushright}
\vspace{-1em}
$\qed$
\end{flushright}
 \end{proof}

Finally, we have the following theorem, which is a simple application of the results obtained above in the special cases where we take the random variables $(Y_t,Z_t)$ solutions to the BSDE \reff{edsr} when they can be written $Y_t=v(t,W_t)$ and $Z_t=v'(t,W_t)$.

\begin{Theorem}\label{estim.dens}
Let $(Y,Z)$ be the solution to the BSDE \reff{edsr} $($which is assumed to exist and to be unique$)$. Assume  that there exists a map $v\in\mathcal C^{1,2}$ such that $Y_t=v(t,W_t)$. 
\begin{itemize}
\item[$\rm{(i)}$] If in addition, $v'>0$, $0\leq \overline{\alpha_{v'}}<\underline{\alpha_v}<+\infty $ and there exist $K>0$, $\tilde\alpha>0$ such that $v'(t,x)\geq 1/(K(1+\abs{x}^{\tilde\alpha}))$ then, denoting $\rho_{Y_t}$ the density of the law of $Y_t$, there exist $y_0>0$, $C_1,C_2>0$, $p_1\in(0,2)$ and $p_2>0$ $($which are given explicitly in Theorem \ref{thm_estime_y}$)$ such that for any $\abs{y}>y_0$
\begin{align*}
\rho_{Y_t}(y)\geq&\ \frac{\E[\abs{Y_t-\E[Y_t]}}{C_2t\left(1+\abs{y}^{1-p_1/2}\right)}\exp\left(-\frac{\abs{y-\E[Y_t]}^{2(p_2+1)}-\abs{y_0-\E[Y_t]}^{2(p_2+1)}}{(p_2+1)C_2t}\right)\\
\rho_{Y_t}(y)\leq&\ \frac{\E[\abs{Y_t-\E[Y_t]}}{C_1t}\left(1+\abs{y}^{2p_2}\right)\exp\left(-\frac{2\abs{y_0-\E[Y_t]}^2}{C_2t}\left(1+y_0^{2p_2}\right)\right)\\
&\times\exp\left(-\frac{\abs{y-\E[Y_t]}^{p_1}-\abs{y_0-\E[Y_t]}^{p_1}}{p_1C_2t}\right).
\end{align*}

\item[$\rm{(ii)}$] If in addition, $v''>0$, $0\leq \overline{\alpha_{v''}}<\underline{\alpha_{v'}}<+\infty $ and there exist $K>0$, $\tilde\alpha>0$ such that $v''(t,x)\geq 1/(K(1+\abs{x}^{\tilde\alpha}))$ then, denoting $\rho_{Z_t}$ the density of the law of $Z_t$, there exists $Z_0>0$, $C_1,C_2>0$, $p_1\in(0,2)$ and $p_2>0$ $($which are given explicitly in Theorem \ref{thm_estime_y}$)$ such that for any $\abs{z}>z_0$
\begin{align*}
\rho_{Z_t}(z)\geq&\ \frac{\E[\abs{Z_t-\E[Z_t]}}{C_2t\left(1+\abs{z}^{1-p_1/2}\right)}\exp\left(-\frac{\abs{z-\E[Z_t]}^{2(p_2+1)}-\abs{z_0-\E[Z_t]}^{2(p_2+1)}}{(p_2+1)C_2t}\right)\\
\rho_{Z_t}(y)\leq&\ \frac{\E[\abs{Z_t-\E[Z_t]}}{C_1t}\left(1+\abs{z}^{2p_2}\right)\exp\left(-\frac{2\abs{z_0-\E[Z_t]}^2}{C_2t}\left(1+z_0^{2p_2}\right)\right)\\
&\times\exp\left(-\frac{\abs{z-\E[Z_t]}^{p_1}-\abs{z_0-\E[Z_t]}^{p_1}}{p_1C_2t}\right).
\end{align*}
\end{itemize}
\end{Theorem}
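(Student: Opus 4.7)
Both statements will follow from Corollary \ref{cor_estime_y} once the right specialization is chosen; there is essentially no new analytical work, only bookkeeping on constants.

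For part (i), the plan is to set $P_t:=Y_t=v(t,W_t)$ and apply Corollary \ref{cor_estime_y} directly. The hypotheses match one-for-one: $v\in\Cc^{1,2}$ provides the required $\Cc^{1,1}$ regularity, $v'>0$ gives both the strict monotonicity of $v(t,\cdot)$ (so that $v^{(-1)}$ is well defined and increasing) and the positivity of the Malliavin derivative $D_rP_t=v'(t,W_t)$; the lower bound $v'(t,x)\ge 1/(K(1+|x|^{\tilde\alpha}))$ is exactly \eqref{ineg_v'}; and $0\le\overline{\alpha_{v'}}<\underline{\alpha_v}<+\infty$ is the extra ingredient of the corollary yielding $\gamma<1$. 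Integrability $P_t\in L^1(\P)$ is automatic from $Y\in\S^2$ (Lipschitz case) or $Y\in\S^\infty$ (quadratic case), and the unboundedness of $v(t,\cdot)$ from above and below is exactly the standing assumption made at the start of Section \ref{section:densY} that the support of the law of $Y_t$ equals $\R$. One then reads off the stated constants by setting $\gamma_0:=(\overline{\alpha_{v'}}+\varepsilon_0)(\overline{\alpha_{v^{(-1)}}}+\varepsilon'_0)\in(0,1)$ and taking $p_1:=2(1-\gamma_0)\in(0,2)$, $p_2:=\tilde\alpha(\overline{\alpha_{v^{(-1)}}}+\varepsilon'_0)$, $C_1:=2M(\varepsilon'_0)$ and $C_2:=2M'(\varepsilon_0,\varepsilon'_0)$.

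For part (ii), I would first exploit the standing reduction $X=W$, $\sigma\equiv 1$, $b\equiv 0$ together with Proposition \ref{prop:Markov} to rewrite
\begin{equation*}
Z_t=v_x(t,X_t)\sigma(t,X_t)=v'(t,W_t)=:\tilde v(t,W_t),
\end{equation*}
so that $Z_t$ is itself of the form covered by Theorem \ref{thm_estime_y}, with $\tilde v:=v'$ playing the role of $v$ and $\tilde v'=v''$ playing the role of $v'$. Since $v\in\Cc^{1,2}$ and $v''$ is continuous, $\tilde v\in\Cc^{1,1}$; the assumptions $v''>0$, $v''(t,x)\ge 1/(K(1+|x|^{\tilde\alpha}))$ and $0\le\overline{\alpha_{v''}}<\underline{\alpha_{v'}}<+\infty$ then fit the hypotheses of Corollary \ref{cor_estime_y} applied to $\tilde v$ word for word, and yield the stated tail bounds on $\rho_{Z_t}$, with $p_1,p_2,C_1,C_2$ now defined analogously in terms of $\overline{\alpha_{v''}}$, $\overline{\alpha_{(v')^{(-1)}}}$ and $\tilde\alpha$.

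The only subtlety worth flagging is the passage from \eqref{estime_y_alpha}-\eqref{estime_y_alpha_bis} to the stretched-exponential bounds of Corollary \ref{cor_estime_y}, which requires cutting the integrals $\int_0^{y-\E[P_t]}$ at a threshold $y_0$ large enough so that the rational integrands can be bounded by their leading monomials. This is carried out once in the proof of Corollary \ref{cor_estime_y} and only needs to be invoked here. Integrability $Z_t\in L^1(\P)$ in part (ii) is not immediately contained in the earlier results, but it follows from the representation $Z_t=v'(t,W_t)$, the polynomial growth of $v'$ implied by $\overline{\alpha_{v'}}<+\infty$, and the fact that $W_t$ has moments of all orders. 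No further obstacle is to be expected.
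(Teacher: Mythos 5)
Your proposal is correct and follows exactly the route the paper intends: the paper gives no separate argument for Theorem \ref{estim.dens} beyond remarking that it is "a simple application of the results obtained above" with $Y_t=v(t,W_t)$ and $Z_t=v'(t,W_t)$, and your specialization of Corollary \ref{cor_estime_y} to $P_t=Y_t$ and $P_t=Z_t$ (with $\tilde v:=v'$), together with the identifications $p_1=2(1-\gamma_0)$, $p_2=\tilde\alpha(\overline{\alpha_{v^{(-1)}}}+\varepsilon_0')$ and the constants $M,M'$, is precisely that application. The integrability and regularity points you flag (e.g.\ $Z_t\in L^1(\P)$ via polynomial growth of $v'$) are handled, or silently assumed, in the same way by the paper.
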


\subsection{Verifying the assumptions of Theorem \ref{estim.dens}}

In this subsection, we give some conditions which ensure that the assumptions in Corollary \ref{cor_estime_y} hold. We recall that under Assumptions (X), (L) or (Q), (D1) and according to Proposition \ref{prop:Markov}, there exists a map $u:[0,T] \times \real \longrightarrow \real$ in $\mathcal{C}^{1,2}$ such that $Y_t =u(t,W_t), \ t\in [0,T], \ \P-$a.s., and $Z$ admits a continuous version given by $ Z_t = u'(t,W_t), \ t\in [0,T], \ \P-$a.s., assuming that $\sigma\equiv1$ and $b\equiv 0$ in the studied FBSDE \eqref{edsr}. Moreover we suppose for simplicity that the generator $h$ of BSDE \eqref{edsr} depends only on $z$, and that $u'$ and $u''$ are\footnote{This assumption is satisfied if $g$ and $h$ are smooth enough.} in $\mathcal{C}^{1,2}$. By a simple application of the non-linear Feynman-Kac formula (see for instance \cite{PardouxPeng_92}), and by differentiating it repeatedly, it can be shown that $u$, $u'$ and $u''$ are respectively classical solutions of the following PDEs:
 \begin{align}\label{PDE}
&\begin{cases}
\displaystyle -u_t(t,x)-\frac12 u_{xx}(t,x)-h(t, u_x(t,x))=0,\ (t,x)\in [0,T)\times \real  \\
\displaystyle u(T,x)=g(x),\ x\in \real,
\end{cases}\\
\label{PDE'}
 & \begin{cases}
\displaystyle - u'_t(t,x)-\frac12  u'_{xx}(t,x)-h_z(t, u'(t,x)) u'_x(t,x)=0,\ (t,x)\in [0,T)\times \real  \\
\displaystyle u'(T,x)=g'(x),\ x\in \real,\end{cases}\\
\label{PDE''}
 & \begin{cases}
\displaystyle - u''_t(t,x)-\frac12 u''_{xx}(t,x)-h_z(t, u'(t,x)) u''_x(t,x)-h_{zz}(t,u'(t,x))|u''(t,x)|^2=0,\; \hspace{-0.2em}(t,x)\in [0,T)\times \real  \\
\displaystyle u''(T,x)=g''(x),\ x\in \real.
\end{cases}
  \end{align}
  
We show in the following proposition and its corollary that under some conditions on $g,g',g''$ and $h, h_z$, the assumptions in Theorem \ref{estim.dens} are satisfied. We emphasize that this is only one possible set of assumptions, and that the required properties of $u$ and its derivatives can be checked on a case by case analysis.
  \begin{Proposition}\label{prop_illustration_lipy}
 Let $u$, $u'$ and $u''$ be respectively the solution to \reff{PDE}, \reff{PDE'} and \reff{PDE''} and assume that a comparison theorem holds for classical super and sub-solutions of these PDEs, in the class of functions with polynomial growth. Assume that there exist $(\varepsilon,\underline C,\overline C)\in(0,1)\times(0,+\infty)^3$, such that for all $x\in \real$
 $$\underline{C}(1+|x|^{1-\varepsilon})\leq g(x)\leq \overline{C}(1+|x|^{1+\varepsilon}).$$ 
Assume moreover that $h$ is non-positive and that there exist $(\varepsilon',\underline{D},\overline{D})\in(0,\varepsilon)\times(0,+\infty)^2$ s.t.
$$\underline{D}(1+|x|^{\varepsilon'})\leq g'(x)\leq \overline{D}(1+|x|^{\varepsilon}).$$

Assume that there exist  $(\underline{B},\overline{B}) \in (0,+\infty)^2$ such that for all $x\in \real$
$$\underline{B}\leq g''(x)\leq \overline{B}, \text{ and } 0\leq h_{zz}(t,x)<\frac{1}{4\overline{B}T}.$$
Assume finally that there exist $\lambda\in(0,\varepsilon^{-1}-1]$ and $C>0$ such that $|h_z(t,z)|\leq C(1+|z|^\lambda)$, then for all $(t,x)\in[0,T]\times\R$,
$$\underline{\alpha_u}\in [1-\varepsilon,1+\varepsilon], \ \overline{\alpha_{u'}},\underline{\alpha_{u'}}\in [\varepsilon',\varepsilon],\ \overline{\alpha_{u''}}=0, \ u'(t,x)\geq \underline D \text{ and } u''(t,x)\geq \underline{B}.$$

  \end{Proposition}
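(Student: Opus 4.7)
\begin{Proof}[Plan]
The strategy is purely PDE-based: since the proposition already grants a comparison principle in the polynomial-growth class for the three semilinear PDEs \eqref{PDE}, \eqref{PDE'} and \eqref{PDE''}, the task reduces to exhibiting, for each equation, appropriate classical sub- and super-solutions with the right growth. I would work backwards, from \eqref{PDE''} to \eqref{PDE}, because the bound on $u''$ enters the coefficient $h_z(t,u')u''$ in \eqref{PDE''} (already OK) but more importantly the bounds on $u'$ and $u''$ are then used to control the nonlinearities appearing when we turn to \eqref{PDE'} and \eqref{PDE}.

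Step 1 (estimates for $u''$). I would take the constant $\underline{B}$ as a candidate sub-solution of \eqref{PDE''}: the only nontrivial term is $-h_{zz}(t,u')\underline{B}^2$, which is $\le 0$ since $h_{zz}\ge 0$; moreover $\underline{B}\le g''$. For the super-solution, since $h_{zz}$ is bounded by $1/(4\overline{B}T)$, the purely time-dependent ansatz $\overline{v}(t)=\overline{B}/(1-(T-t)/(4T))$, which solves the ODE $-\overline{v}'=\overline{B}^{-1}\overline{v}^2/(4\overline{B}T)$, satisfies $\overline{v}(T)=\overline{B}\ge g''$ and remains finite on $[0,T]$, with $\overline{v}(0)=4\overline{B}/3$. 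Comparison then gives $\underline{B}\le u''\le 4\overline{B}/3$, proving both $u''\ge \underline{B}$ and $\overline{\alpha_{u''}}=0$.

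Step 2 (estimates for $u'$). The constant $\underline{D}$ is trivially a sub-solution of the (now genuinely first-order in $x$) equation \eqref{PDE'}, giving $u'\ge\underline{D}$. For the upper bound I would use the ansatz $V(t,x)=e^{\mu(T-t)}\bigl(A(1+x^2)^{\varepsilon/2}+B\bigr)$. The terminal condition $V(T,\cdot)\ge g'$ holds provided $A,B$ are large enough. The super-solution inequality, after asymptotic expansion as $|x|\to\infty$, reduces to checking that the leading term $\mu V\sim \mu|x|^\varepsilon$ dominates $-V_{xx}/2=O(|x|^{\varepsilon-2})$ and $|h_z(t,V)V_x|\le C(1+V^\lambda)|V_x|=O(|x|^{\varepsilon(1+\lambda)-1})$. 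The structural assumption $\lambda\le \varepsilon^{-1}-1$ is exactly what guarantees $\varepsilon(1+\lambda)-1\le 0$, so both lower-order terms are bounded, and any sufficiently large $\mu>0$ makes $V$ a super-solution on $[0,T]\times\R$. This yields $u'(t,x)\le C(1+|x|^\varepsilon)$, hence $\overline{\alpha_{u'}}\le \varepsilon$. The matching lower bound of order $|x|^{\varepsilon'}$ comes from the symmetric construction with sub-solution $e^{-\mu(T-t)}\underline{D}(1+x^2)^{\varepsilon'/2}$: the same arithmetic $\varepsilon'(1+\lambda)\le \varepsilon(1+\lambda)\le 1$ keeps the $h_z$-drift lower-order. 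Combining yields $\underline{\alpha_{u'}},\overline{\alpha_{u'}}\in[\varepsilon',\varepsilon]$, and Step 1 already gives $u''\ge \underline{B}>0$.

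Step 3 (estimates for $u$). Since $h\le 0$, the function $u$ is a sub-solution of the pure heat equation with terminal $g$, so Feynman--Kac gives $u(t,x)\le\E[g(W_T^{t,x})]\le \overline{C}\,\E[1+|W_T^{t,x}|^{1+\varepsilon}]\le C_T(1+|x|^{1+\varepsilon})$. For the lower bound I would use the Feynman--Kac representation $u(t,x)=\E[g(W_T^{t,x})+\int_t^Th(s,u'(s,W_s^{t,x}))ds]$ together with Step 2: $|h(s,z)|\le C(1+|z|^{1+\lambda})$ (integrating the bound on $h_z$ and using $h(\cdot,0)\in L^2$) combined with $u'\le C(1+|W_s|^\varepsilon)$ produces $|h(s,u'(s,W_s^{t,x}))|\le C(1+|W_s^{t,x}|^{\varepsilon(1+\lambda)})\le C(1+|W_s^{t,x}|)$ using again $\varepsilon(1+\lambda)\le 1$. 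Hence $\E[|\int_t^Th(\cdot)ds|]\le CT(1+|x|)$, while the terminal term satisfies $\E[g(W_T^{t,x})]\ge \underline{C}(1+c\,|x|^{1-\varepsilon})$ for $|x|$ large by bounding $\E[|W_T^{t,x}|^{1-\varepsilon}]$ from below on the event $\{|W_T^{t,x}|\ge |x|/2\}$. Together with the upper bound this pins down $\underline{\alpha_u}\in[1-\varepsilon,1+\varepsilon]$.

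The main obstacle I expect is Step 2, more precisely the construction of the super-solution for $u'$: here the PDE is genuinely quasilinear (the drift $h_z(t,u')$ depends on the solution), and the whole argument hinges on the sharp balance $\varepsilon(1+\lambda)\le 1$ built into the assumption $\lambda\le \varepsilon^{-1}-1$. Once that step is in place, the remaining super/sub-solution constructions and the Feynman--Kac comparisons in Steps 1 and 3 are routine.
\end{Proof}
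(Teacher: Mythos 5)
Your Steps 1 and 2 follow essentially the paper's route (comparison with explicit sub/super-solutions; the constant $\underline B$ and $\underline D$ as sub-solutions, a time-only super-solution for $u''$, and a super-solution of growth $|x|^{\varepsilon}$ for $u'$ whose drift term is killed by the balance $\varepsilon(1+\lambda)\le 1$ — exactly the mechanism the paper uses, up to your multiplicative factor $e^{\mu(T-t)}$ in place of the paper's additive $\tilde C(T-t)$; note that in your super-solution for $u'$ the right knob is the additive constant $B$, not $\mu$, since $h_z(t,V)$ grows like $e^{\lambda\mu T}$ and defeats the linear gain in $\mu$).

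The genuine gap is in Step 3, in the lower bound for $u$. Your Feynman--Kac estimate gives
$$u(t,x)\;\ge\;\E\left[g(W_T^{t,x})\right]-\E\left[\int_t^T\abs{h(s,u'(s,W_s^{t,x}))}ds\right]\;\ge\;\underline C\left(1+c|x|^{1-\varepsilon}\right)-CT(1+|x|),$$
because evaluating $h$ at the \emph{actual} gradient $u'$, which grows like $|x|^{\varepsilon}$, only yields $|h(s,u')|\le C(1+|W_s|^{\varepsilon(1+\lambda)})\le C(1+|W_s|)$; the hypothesis $\lambda\le\varepsilon^{-1}-1$ gives nothing better when it is an equality. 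For large $|x|$ the error term $CT(1+|x|)$ dominates the main term $c|x|^{1-\varepsilon}$, so this two-sided bound is compatible with $u(t,x_n)=0$ along a sequence $|x_n|\to\infty$ and therefore cannot yield $\liminf_{|x|\to\infty}|u(t,x)|/|x|^{\alpha}=+\infty$ for $\alpha<1-\varepsilon$, i.e.\ it does not prove $\underline{\alpha_u}\ge 1-\varepsilon$. The paper avoids this by taking as sub-solution $\underline C\kappa_\varepsilon(x)-\tilde C_1(T-t)$, where $\kappa_\varepsilon$ coincides with $1+|x|^{1-\varepsilon}$ at infinity: its spatial gradient $\underline C\kappa_\varepsilon'(x)$ tends to $0$, so $h(t,\phi_x(t,x))$ ranges over a compact set and is uniformly bounded, and the correction needed is only the $x$-independent constant $\tilde C_1 T$, which is dominated by $|x|^{1-\varepsilon}$. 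You should replace your Feynman--Kac lower bound by this sub-solution argument (your upper bound for $u$, using $h\le 0$, is fine and matches the paper's super-solution step).
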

  
\begin{proof} 
 Let $\varphi(t,x):=\tilde{C}(T-t)+\overline{C}k_\varepsilon(x)$, where $k_\varepsilon(x)$ is in $\mathcal C^\infty(\R)$, coincides with the function $(1+\abs{x}^{1+\varepsilon})$ outside some closed interval centered at $0$ and is always greater than $(1+\abs{x}^{1+\varepsilon})$. We show that $\varphi$ is a (classical) super-solution to \reff{PDE} for some positive constant $\tilde{C}$ large enough. Indeed we can choose $\tilde{C}>0$ such that for any $(t,x)\in[0,T)\times\R$
$$ -\varphi_t(t,x)-\frac12 \varphi_{xx}(t,x)-h(t, \varphi_x(t,x))=\tilde{C}-\frac12 \overline{C}k_\varepsilon''(x)-h(t,\varphi_x(t,x))\geq 0,$$
since $h\leq 0$ and $\lim\limits_{|x|\to \infty }\frac12 k_\varepsilon''(x)=0$. 

\vspace{0.3em}
Moreover, by the assumption made on $g$, we clearly have for all $x\in\R$, $g(x)\leq \overline{C}k_\varepsilon(x)$, so that we deduce by comparison that for all $(t,x)\in [0,T] \times \real$:
$$ u(t,x)\leq \overline{C}k_\varepsilon(x)+\tilde{C}(T-t).$$
Now, we let $\phi(t,x):=-\tilde{C}_1(T-t)+\underline{C}\kappa_\varepsilon(x)$ for $(t,x)\in [0,T)\times \real$, where $\kappa_\varepsilon(x)$ is in $\mathcal C^\infty(\R)$, coincides with the function $(1+\abs{x}^{1-\varepsilon})$ outside some closed interval centered at $0$ and is always smaller than $(1+\abs{x}^{1-\varepsilon})$. We show that $\phi$ is a classical subsolution to \reff{PDE} for some positive constant $\tilde{C}_1$ large enough. We have
\begin{equation}\label{truc} - \phi_t(t,x)-\frac12 \phi_{xx}(t,x)-h(t,\phi_x(t,x))=-\tilde{C}_1+\frac12\underline{C}\kappa_\varepsilon''(x)-h(t, \phi_x(t,x)).
\end{equation}
Given that the quantity $h(t,\phi_x(t,x))=h(t,\underline C\kappa_\varepsilon'(x))$ is bounded because $\lim\limits_{|x|\to \infty}\kappa_\varepsilon'(x)=0$ and $h$ is continuous, we can always choose $\tilde C_1$ so that \reff{truc} is non-positive. Then, since we clearly have for all $x\in\R$, $g(x)\geq \underline{C}\kappa_\varepsilon(x)$, we deduce by comparison that for all $(t,x)\in [0,T] \times \real$:
$$ u(t,x)\geq \overline{C}\kappa_\varepsilon(x)+\tilde{C}_1(T-t).$$
 To sum up, we have showed that for all $(t,x)\in [0,T]\times \real$:
$$ \underline{C}\kappa_\varepsilon(x)-\tilde{C}_1(T-t)\leq u(t,x)\leq  \overline{C}k_\varepsilon(x)+\tilde{C}(T-t).$$ In other words $[\underline{\alpha_u},\overline{\alpha_u}]\subset [1-\varepsilon,1+\varepsilon]. $ 

\vspace{0.3em}
We  now study \reff{PDE'}. Define for some constant $\tilde C_2>0$ to be fixed later
\begin{align*}
\psi(t,x):=\tilde C_2(T-t)+\overline D\Upsilon_\varepsilon(x),
\end{align*}
where $\Upsilon_\varepsilon(x)$ is in $\mathcal C^\infty(\R)$, coincides with the function $(1+\abs{x}^{\varepsilon})$ outside some closed interval centered at $0$ and is always greater than $(1+\abs{x}^{\varepsilon})$. We then have
\begin{align*}
  -\psi_t(t,x)-\frac12 \psi_{xx}(t,x)-h_z(t,\psi(t,x)) \psi_x(t,x)=\tilde{C}_2-\frac12\overline{D}\Upsilon_\varepsilon''(x)-h_z(t,\psi(t,x))\overline D\Upsilon_\varepsilon'(x).
\end{align*}
Next, for some constant $C>0$ which may vary from line to line
$$|h_z(t,\psi(t,x))|\leq C(1+\abs{\psi(t,x)}^{\lambda})\leq C(1+\abs{x}^{\lambda\varepsilon}),$$ and since $\lambda \leq\frac1\varepsilon-1$ we deduce that:
$$\abs{h_z(t,\psi(t,x))\overline D\Upsilon_\varepsilon'(x)}\leq C(1+|x|^{\lambda \varepsilon+\varepsilon-1}) \text{, which is bounded. }$$  
Since in addition we have $\Upsilon_\varepsilon''(x)\longrightarrow 0$ as $\abs{x}$ goes to $+\infty$, we can always choose $\tilde C_2$ large enough so that
$$- \psi_t(t,x)-\frac12 \psi_{xx}(t,x)-h_z(t,\psi(t,x))\psi_x(t,x)\geq 0.$$
By the assumption we made on $g$, we can use once more the comparison theorem to obtain
$$u'(t,x)\leq \psi(t,x).$$ Similarly, we show that $\underline{D}\Upsilon_{\varepsilon'}(x)-\tilde{C}_3(T-t)$ is a sub-solution of \reff{PDE'} for some positive constant $\tilde{C}_3$, since $\lambda\leq \varepsilon^{-1}-1\leq \varepsilon'^{-1}-1$. Then, by comparison, we deduce that $\overline{\alpha_{u'}}, \underline{\alpha_{u'}} \in [\varepsilon', \varepsilon]$. Moreover, we notice that $\underline{D}\leq g'(x)$ for all $x\in \real$, so $\underline{D}$ is a sub-solution of \reff{PDE'}. Thus, using once more the comparison theorem $u'(t,x)\geq \underline{D}$ for all $(t,x)\in [0,T]\times \real$.

\vspace{0.4em}
We  now study \reff{PDE''}. Given that $h_{zz}$ is non negative and $\underline{B}\leq g''(x)$ for all $(t,x)\in [0,T]\times \real$, we deduce directly that $\underline{B}$ is a sub-solution of \reff{PDE''}. Next, let $\varpi(t,x)=\overline{B}+\frac{\overline{B}}{T^{1-\eta}}(T-t)^{1-\eta}$ where $\eta \in (0,1)$ is chosen small enough so that $h_{zz}(t,x)\leq \frac{1-\eta}{4T\overline{B}}$. Thus,
\begin{align*}
& - \varpi_t(t,x)-\frac12 \varpi_{xx}(t,x)-h_z(t, u'(t,x)) \varpi_x(t,x)-h_{zz}(t,u'(t,x))|\varpi(t,x)|^2\\
&=(1-\eta)\frac{\overline{B}}{T^{1-\eta}}(T-t)^{-\eta}-h_{zz}(t,u'(t,x))\overline{B}^2\left(1+\frac{(T-t)^{1-\eta}}{T^{1-\eta}} \right)^2\\
&\geq (1-\eta)\frac{\overline{B}}{T^{1-\eta}}(T-t)^{-\eta}-\frac{1-\eta}{4T}\overline{B}\left(1+\frac{(T-t)^{1-\eta}}{T^{1-\eta}} \right)^2\\
&\geq 0.
 \end{align*}
 
We deduce that $\varpi$ is a super solution of \reff{PDE''}, which by comparison, implies that $u''$ is bounded, so $\overline{\alpha_{u''}}=0$.
\end{proof}
  
\begin{Corollary}
  
Consider the FBSDE \eqref{edsr} and assume that for all $t\in [0,T]$ $X_t=W_t$ and $h$ depends only on $z$. Let $u(t,X_t):=Y_t$ and assume that $u\in \Cc^{1,2}$, $u'\in \Cc^{1,2}$ and $u''\in \Cc^{1,2}$. Let the assumptions of Proposition \ref{prop_illustration_lipy} hold, and assume moreover that $\varepsilon \in (0,\frac12)$. Then, the assumptions of Theorem \ref{estim.dens} hold.
\end{Corollary}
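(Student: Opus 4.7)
The plan is a direct verification: Proposition \ref{prop_illustration_lipy} already provides the positivity and the growth exponents of $u$, $u'$, $u''$ that are needed as inputs to Theorem \ref{estim.dens}, so it only remains to read off those bounds and check the three conditions of each part. Since $X_t=W_t$ in this setting, Proposition \ref{prop:Markov} gives $Y_t=u(t,W_t)$ and $Z_t=u'(t,W_t)$, so $v=u$ in Theorem \ref{estim.dens}(i) while in part (ii) one applies the theorem with $v=u$ as well (the statement is formulated in terms of $v'$ and $v''$ precisely because $Z_t=v'(t,W_t)$).

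For part (i), I would check in turn:
\begin{itemize}
\item $u'>0$: Proposition \ref{prop_illustration_lipy} yields $u'(t,x)\geq \underline{D}>0$.
\item $0\leq \overline{\alpha_{u'}}<\underline{\alpha_u}<+\infty$: Proposition \ref{prop_illustration_lipy} gives $\overline{\alpha_{u'}}\leq \varepsilon$ and $\underline{\alpha_u}\geq 1-\varepsilon$, together with $\underline{\alpha_u}\leq 1+\varepsilon<+\infty$. The hypothesis $\varepsilon\in(0,1/2)$ then yields $\varepsilon<1-\varepsilon$, hence $\overline{\alpha_{u'}}\leq \varepsilon<1-\varepsilon\leq \underline{\alpha_u}$, which is the only place where $\varepsilon<1/2$ is actually used.
\item Lower bound $u'(t,x)\geq 1/(K(1+|x|^{\tilde\alpha}))$: choose $K:=1/\underline{D}$ and any $\tilde\alpha>0$; then $1/(K(1+|x|^{\tilde\alpha}))\leq \underline{D}\leq u'(t,x)$.
\end{itemize}

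For part (ii), exactly the same pattern applies: $u''(t,x)\geq \underline{B}>0$ gives both the positivity $v''>0$ and, with $K:=1/\underline{B}$ and any $\tilde\alpha>0$, the required lower bound on $u''$. For the growth exponents, Proposition \ref{prop_illustration_lipy} gives $\overline{\alpha_{u''}}=0$ and $\underline{\alpha_{u'}}\in[\varepsilon',\varepsilon]$, so $0=\overline{\alpha_{u''}}<\varepsilon'\leq \underline{\alpha_{u'}}\leq \varepsilon<+\infty$, with the strict inequality coming for free from $\varepsilon'>0$ (no further constraint on $\varepsilon$ is needed here). I do not foresee any real obstacle: the only subtle point is that the assumption $\varepsilon<1/2$ is imposed precisely to separate the $\underline{\alpha_u}$ and $\overline{\alpha_{u'}}$ intervals in part (i), whereas part (ii) is automatic from $\overline{\alpha_{u''}}=0$. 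The proof therefore reduces to transcribing the conclusions of Proposition \ref{prop_illustration_lipy} into the language of Theorem \ref{estim.dens}.
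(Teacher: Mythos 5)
Your proof is correct and follows essentially the same route as the paper's own argument: read off from Proposition \ref{prop_illustration_lipy} that $u'\geq\underline{D}>0$, $u''\geq\underline{B}>0$, $\overline{\alpha_{u'}}\leq\varepsilon<1-\varepsilon\leq\underline{\alpha_u}$ (using $\varepsilon<1/2$) and $0=\overline{\alpha_{u''}}<\varepsilon'\leq\underline{\alpha_{u'}}$. Your explicit choices of $K$ and $\tilde\alpha$ for the lower-bound conditions are a slightly more detailed transcription of what the paper leaves implicit, but the substance is identical.
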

  
 \begin{proof}
  According to Proposition \ref{prop_illustration_lipy}, $\underline{\alpha_u}\geq 1-\varepsilon$, $\overline{\alpha_{u'}}\leq \varepsilon$ and $u'(t,x)\geq \underline D,\ (t,x)\in[0,T]\times\R$. From the fact that $\varepsilon$ is smaller than $1/2$, we deduce that $0\leq\overline{\alpha_{u'}}<\underline{\alpha_{u}}<+\infty$. Moreover, $0=\overline{\alpha_{u''}}<\varepsilon'\leq\underline{\alpha_{u'}}$.
\end{proof}

\begin{flushright}
\vspace{-2em}
$\qed$
\end{flushright}

\section*{Acknowledgments}
Thibaut Mastrolia is grateful to R\'egion Ile-De-France for financial support. The authors thank an Associate Editor and two anonymous Referees for their careful reading of this paper and for insightful suggestions which have greatly improve its presentation.

\newpage
\section{Table of assumptions-results}

In this appendix we recall the different assumptions made within this paper and we give a summary table of some most significant results on BSDEs including ours.

\paragraph*{Assumption for $X$:}

\begin{itemize}
\item[(X)]$b,\sigma : [0,T]\times \real \longrightarrow \real$ are continuous in time and continuously differentiable in space for any fixed time $t$ and such that there exist $k_b,k_\sigma >0$ with 
$$|b_x(t,x)|\leq k_b,\ |\sigma_x(t,x)|\leq k_\sigma, \text{ for all $x\in\R$}.$$ 
Besides $b(t,0), \sigma(t,0)$ are bounded functions of $t$ and there exists $c>0$ such that for all $t\in [0,T]$ $$0<c\leq |\sigma(t,\cdot)|, \ \lambda(dx)-a.e.$$
\end{itemize}

\textbf{List of assumptions for BSDEs:} 

\begin{itemize}
\item[(L)] 
\begin{itemize}
\item[(i)] $g : \mathbb{R} \longrightarrow \mathbb{R}$ is such that $\E[g(X_T)^2]<+\infty$.
\item[(ii)]  $h : [0,T]\times \mathbb{R}^3 \longrightarrow \mathbb{R}$ is such that there exist $(k_x,k_y,k_z)\in(\R_+^*)^3$ such that for all $(t,x_1,x_2,y_1,y_2,z_1,z_2) \in [0,T]\times \mathbb{R}^6$, 
$$ |h(t,x_1,y_1,z_1)-h(t,x_2,y_2,z_2)|\leq k_x|x_1-x_2|+k_y|y_1-y_2|+k_z|z_1-z_2|.$$
\item[(iii)] $\int_0^T |h(s,0,0,0)|^2ds<+\infty$.
\end{itemize}
\item[(Q)]
\begin{itemize}
\item[(i)] $g : \mathbb{R} \longrightarrow \mathbb{R}$ is bounded.
\item[(ii)] $h : [0,T]\times \mathbb{R}^3 \longrightarrow \mathbb{R}$ is such that:
\begin{itemize}
\item[$\triangleright$] There exists $(K,K_z,K_y)\in(\R_+^*)^3$ such that for all $(t,x,y,z) \in [0,T]\times \mathbb{R}^3$
$$\hspace{-3em}|h(t,x,y,z)|\leq K(1+|y|+|z|^2),\ \abs{h_z}(t,x,y,z)\leq K_z(1+|z|),\ \abs{h_y}(t,x,y,z)\leq K_y.$$
\item[$\triangleright$]  There exists $C>0$ such that for all $(t,x,y,z_1,z_2) \in [0,T] \times \real^4$
\begin{equation*} |h(t,x,y,z_1)-h(t,x,y,z_2)| \leq C(1+|z_1|+|z_2|) |z_1-z_2|.\end{equation*}
\end{itemize}
\item[(iii)] $\int_0^T |h(s,0,0,0)|^2ds<+\infty$.
\end{itemize}

\end{itemize}

\paragraph*{List of assumptions for Malliavin differentiability of $(X,Y,Z)$:} 

\begin{itemize}
\item[(D1)] 

\begin{itemize}
\item[(i)] $g$ is differentiable, $\mathcal{L}(X_T)-$a.e., $g$ and $g'$ have polynomial growth.
\item[(ii)] $(x,y,z)\mapsto h(t,x,y,z)$ is continuously differentiable for every $t$ in $[0,T]$.

\end{itemize}
\item[(D2)] 
\begin{itemize}
\item[(i)] $g$ is twice differentiable, $\mathcal{L}(X_T)-$a.e., $g$, $g'$ and $g''$ have polynomial growth. 

\item[(ii)] $(x,y,z)\mapsto h(t,x,y,z)$ is twice continuously differentiable for every $t$ in $[0,T]$.
\end{itemize}
\end{itemize}

\paragraph*{List of assumptions for the existence of densities for $Y$ and $Z$:}

$$\underline{g}:= \inf\limits_{x \in \real} g'(x), \quad \underline{g}^A:=\inf\limits_{x\in A} g'(x),\quad \overline{g}:= \sup\limits_{x \in \real} g'(x), \quad \overline{g}^A:=\sup\limits_{x\in A} g'(x),$$
$$\underline{h}(t):=\inf\limits_{ s\in [t,T], (x,y,z) \in \real^3} h_x(s,x,y,z),\quad \quad \overline{h}(t):=\sup\limits_{ s\in [t,T], (x,y,z) \in \real^3} h_x(s,x,y,z),$$

and $K:=k_b+k_y+k_{\sigma}k_z$. There exists $A\in\mathcal B(\R)$ such that $\mathbb{P}(X_T \in A | \mathcal{F}_t)>0$ and such that:
\begin{align*}
 &(H+)\quad \begin{cases}
\displaystyle \underline{g}e^{-\sgn(\underline{g})KT}+\underline{h}(t)\int_t^T e^{-\sgn(\underline{h}(s))Ks}ds\geq0 \\
\displaystyle \underline{g}^Ae^{-\sgn(\underline{g}^A)KT}+\underline{h}(t)\int_t^T e^{-\sgn(\underline{h}(s))Ks}ds>0
\end{cases}\\[0.3em]
&(H-)\quad \begin{cases}
\displaystyle  \overline{g}e^{-\sgn(\overline{g})KT}+\overline{h}(t)\int_t^T e^{-\sgn(\overline{h}(s))Ks}ds\leq 0 \\
\displaystyle \overline{g}^Ae^{-\sgn(\overline{g}^A)KT}+\overline{h}(t)\int_t^T e^{-\sgn(\overline{h}(s))Ks}ds<0,
\end{cases}
\end{align*}

Set
\begin{align*}
\tilde{h}(s,x,y,z):=& -\left( h_{xt}+b h_{xx}-hh_{xy}+\frac12(\sigma^2 h_{xxx}+2z\sigma h_{xxy}+z^2h_{xxy})\right)(s,x,y)\nonumber\\
&-\left((h_y+b_x)h_x+\sigma \sigma_xh_{xx}+z\sigma_xh_{xy}\right)
(s,x,y).\\
 \tilde{g}(x):=&\ g'(x)+(T-t)h_x(T,x,g(x)),
 \end{align*}
 and 
$$ \underline{\tilde{g}}:=\min\limits_{x\in \mathbb{R}} \tilde{g}(x), \quad \overline{\tilde{g}}:=\max\limits_{x\in \mathbb{R}} \tilde{g}(x),\quad \underline{\tilde{g}}^A:=\min\limits_{x\in A} \tilde{g}(x), \quad \overline{\tilde{g}}^A:=\max\limits_{x\in A} \tilde{g}(x),$$
$$\underline{\tilde{h}}(t):=\min\limits_{[t,T]\times \mathbb{R}^3} \tilde{h}(s,x,y,z), \ \overline{\tilde{h}}(t):=\max\limits_{[t,T]\times \mathbb{R}^3} \tilde{h}(s,x,y,z),$$
and set $K:=k_y+k_b$. There exists $A\in\mathcal B(\R)$ such that $\mathbb{P}(X_T \in A | \mathcal{F}_t)>0$  \begin{align*}
&\widetilde{(H+)} \quad  \begin{cases}
\displaystyle \underline{\tilde{g}}e^{-\sgn(\underline{\tilde{g}})KT}+\underline{\tilde{h}}(t)\int_t^T e^{-\sgn(\underline{\tilde{h}}(s))Ks}(T-s)ds\geq0 \\
\displaystyle \underline{\tilde{g}}^Ae^{-\sgn(\underline{\tilde{g}}^A)KT}+\underline{\tilde{h}}(t)\int_t^T e^{-\sgn(\underline{\tilde{h}}(s))Ks}(T-s)ds>0,
\end{cases}\\[0.3em]
&\widetilde{(H-)} \quad \begin{cases}
\displaystyle \overline{\tilde{g}}e^{-\sgn(\overline{\tilde{g}})KT}+\overline{\tilde{h}}(t)\int_t^T e^{-\sgn(\overline{\tilde{h}}(s))Ks}(T-s)ds\leq 0 \\
\displaystyle \overline{\tilde{g}}^Ae^{-\sgn(\overline{\tilde{g}}^A)KT}+\overline{\tilde{h}}(t)\int_t^T e^{-\sgn(\overline{\tilde{h}}(s))Ks}(T-s)ds<0.
\end{cases}
\end{align*}

\begin{itemize}
\item[$(Q+)$] $g'\geq 0$ and  $g'_{\vert A} >0$, $\mathcal{L}(X_T)-$a.e. and $\underline{h}(t)\geq 0$,
\item[$(Q-)$] $g'\leq 0,\ g'_{\vert A}<0$, $\mathcal{L}(X_T)-$a.e. and $\overline{h}(t)\leq 0$,
\item[(Z+)]
\begin{itemize}
\item There exist $(\underline a,\overline a)$ s.t., $0<\underline{a}\leq  D_r X_u \leq \overline{a}$, for all $0<r<u\leq T$.
\item There exists $\overline b$ s.t., $0\leq  D_{r,s}^2 X_u\leq \overline{b}$, for all $0<r,s<u\leq T$.
\item $h_x,h_{xx},h_{yy},h_{zz},h_{xy}\geq 0$ and $h_{xz}= h_{yz}= 0$ (and $h_y\geq 0$ under $(Q)$)
\item $h_{xy}= 0$ or $(h_{xy}\geq 0$ and $g'\geq 0$, $\mathcal{L}(X_T)$-a.e.$)$.
\item We have $$ \mathbf{1}_{\{\underline{g''}<0\}}\underline{g''}\overline{a}^2+\underline{g'}\mathbf{1}_{\{\underline{g'}<0\}}\overline{b}+(\mathbf{1}_{\{\underline{g''}\geq 0\}}\underline{g''}+\underline{h_{xx}}(t)(T-t))\underline{a}^2\geq 0,$$
and
$$ (\mathbf{1}_{\{\underline{g''}^A<0\}}\underline{g''}^A\overline{a}^2+\underline{g'}^A\mathbf{1}_{\{\underline{g'}<0\}}\overline{b}) +(\mathbf{1}_{\{\underline{g''}^A\geq 0\}}\underline{g''}^A+\underline{h_{xx}}(t)(T-t))\underline{a}^2>0,$$
\end{itemize}
\end{itemize}

We give the following summary table which sums up significant results for BSDEs in both the Lipschitz case and the quadratic case with assumptions made and references. 
\small
\begin{center}
\begin{tabular}{|c|c|c|}
 \hline \diagbox{Results}{Cases} & Lipschitz case (L) &  Quadratic case (Q)  \\
 \hline Existence and uniqueness  &\multirow{2}{*}{Prop. \ref{propex} (X)}  & \multirow{2}{*}{Prop. \ref{propexq} (X) }\\
 of solutions of BSDEs&  &  \\
\hline Malliavin differentiability & \multirow{2}{*}{Prop. \ref{MD}  (X) and (D1) } &  \multirow{2}{*}{Prop. \ref{MDq} (X) and (D1)}  \\
of $(X,Y,Z)$  &  &\\
\hline \multirow{2}{*}{Density existence for $Y$} & Th. \ref{thm_H+H-} (X), (D1) and (H+) or (H-)& \multirow{2}{*}{Th. \ref{thm_H+H-_quadra}  (X), (D2) and (Q+) or (Q-)} \\
&  Th. \ref{AKmodifie} (X), (D1) and ($\widetilde{H+}$) or ($\widetilde{H-}$) &\\
\hline \multirow{2}{*}{Density existence for $Z$} & \multirow{2}{*}{Th. \ref{thm_density_z_lip} (X), (D2) and (Z+)} & \multirow{2}{*}{Th. \ref{thm_density_z_quadra}  (X), (D2) and (Z+)}\\
& &\\
\hline 
\end{tabular}
\end{center}
\normalsize


\begin{thebibliography}{10}

\bibitem{AbouraBourguin}
O.~Aboura and S.~Bourguin.
\newblock Density {E}stimates for {S}olutions to {O}ne {D}imensional {B}ackward
  {SDE}'s.
\newblock {\em Potential Anal.}, 38(2):573--587, 2013.

\bibitem{AIDR}
S.~Ankirchner, P.~Imkeller, and G.~Dos~Reis.
\newblock Classical and variational differentiability of {BSDE}s with quadratic
  growth.
\newblock {\em Electron. J. Probab.}, 12:no. 53, 1418--1453 (electronic), 2007.

\bibitem{AntonelliKohatsu}
F.~Antonelli and A.~Kohatsu-Higa.
\newblock Densities of one-dimensional backward {SDE}s.
\newblock {\em Potential Anal.}, 22(3):263--287, 2005.


\bibitem{barrieu_elk}
P. Barrieu and N. El Karoui.
\newblock {\em Pricing, Hedging and Optimally Designing Derivatives via Minimization of Risk Measures,}
\newblock Priceton university press, 2007.


\bibitem{BinghamGoldieTeugels}
N.~H. Bingham, C.M. Goldie, and J.~L. Teugels.
\newblock {\em Regular variation}, volume~27.
\newblock Cambridge university press, 1989.

\bibitem{ChassagneuxRichou}
J.-F. Chassagneux and A.~Richou.
\newblock Numerical simulation of quadratic {BSDE}s.
\newblock Preprint. http://arxiv.org/abs/1307.5741, 2013.

\bibitem{DosReisPhD}
G.~Dos~Reis.
\newblock {\em On some properties of solutions of quadratic growth {BSDE} and
  applications in finance and insurance}.
\newblock PhD thesis, Humboldt University in Berlin,
  http://www.math.tu-berlin.de/$\sim$dosreis/publications/GdosReis-PhD-Thesis.pdf,
  2010.

\bibitem{dosdos}
G.~dos Reis and R.~J.~N. dos Reis.
\newblock A note on comonotonicity and positivity of the control components of
  decoupled quadratic {FBSDE}.
\newblock {\em Stoch. Dyn.}, 13(4):1350005, 11, 2013.


\bibitem{elk_hamadene_matoussi}
N. El~Karoui, S. Hamadene, and A. Matoussi.
\newblock Backward stochastic differential equations and applications.
 Chapter 8 in the book "Indifference Pricing: Theory and Applications" edited by René Carmona, Springer-Verlag  pp. 267--320 (2008).
 
\bibitem{ElkarouiPengQuenez}
N.~El~Karoui, S.~Peng, and M.~C. Quenez.
\newblock Backward stochastic differential equations in finance.
\newblock {\em Math. Finance}, 7(1):1--71, 1997.

\bibitem{FournierPrintems}
N.~Fournier and J.~Printems.
\newblock Absolute continuity for some one-dimensional processes.
\newblock {\em Bernoulli}, 16(2):343--360, 2010.

\bibitem{HIM}
Y.~Hu, P.~Imkeller, and M.~M{{\"u}}ller.
\newblock Utility maximization in incomplete markets.
\newblock {\em Ann. Appl. Probab.}, 15(3):1691--1712, 2005.


\bibitem{ImkellerDosreis}
P.~Imkeller and G.~Dos~Reis.
\newblock Path regularity and explicit convergence rate for {BSDE} with
  truncated quadratic growth.
\newblock {\em Stochastic Process. Appl.}, 120(3):348--379, 2010.

\bibitem{IRR}
P.~Imkeller, A.~R{{\'e}}veillac, and A.~Richter.
\newblock Differentiability of quadratic bsde generated by continuous
  martingales and hedging in incomplete markets.
\newblock {\em Annals of Applied Probability}, 22(1):285--336, 2012.


\bibitem{jacobson}
D.H. Jacobson.
\newblock Optimal stochastic linear system with exponential criteria and their relation to differential games.
\newblock {\em IEEE Trans. Automat. Control}, 18(2):124-131, 1973.

\bibitem{KS}
I.~Karatzas and S.~E. Shreve.
\newblock {\em Brownian motion and stochastic calculus}, volume 113 of {\em
  Graduate Texts in Mathematics}.
\newblock Springer-Verlag, New York, second edition, 1991.

\bibitem{Kazamaki}
N.~Kazamaki.
\newblock {\em Continuous exponential martingales and {BMO}}, volume 1579 of
  {\em Lecture Notes in Mathematics}.
\newblock Springer-Verlag, Berlin, 1994.

\bibitem{Kobylanski}
M.~Kobylanski.
\newblock Backward stochastic differential equations and partial differential
  equations with quadratic growth.
\newblock {\em Annals of Probability}, 28(2):558--602, 2000.

\bibitem{Kohatsu_2003b}
A.~Kohatsu-Higa.
\newblock Lower bounds for densities of uniformly elliptic non-homogeneous
  diffusions.
\newblock In {\em Stochastic inequalities and applications}, volume~56 of {\em
  Progr. Probab.}, pages 323--338. Birkh{\"a}user, Basel, 2003.

\bibitem{Kohatsu_03}
A.~Kohatsu-Higa.
\newblock Lower bounds for densities of uniformly elliptic random variables on
  {W}iener space.
\newblock {\em Probab. Theory Related Fields}, 126(3):421--457, 2003.

\bibitem{Ma_Zhang_PTRF02}
J.~Ma and J.~Zhang.
\newblock Path regularity for solutions of backward stochastic differential
  equations.
\newblock {\em Probab. Theory Related Fields}, 122(2):163--190, 2002.

\bibitem{MaZhang}
J.~Ma and J.~Zhang.
\newblock Representation theorems for backward stochastic differential
  equations.
\newblock {\em Ann. Appl. Probab.}, 12(4):1390--1418, 2002.

\bibitem{NourdinViens}
I.~Nourdin and F.~Viens.
\newblock Density formula and concentration inequalities with {M}alliavin
  calculus.
\newblock {\em Electron. J. Probab.}, 14:no. 78, 2287--2309, 2009.

\bibitem{Nualartbook}
D.~Nualart.
\newblock {\em The {M}alliavin calculus and related topics}.
\newblock Probability and its Applications (New York). Springer-Verlag, Berlin,
  second edition, 2006.

\bibitem{Nualart_Quer}
E.~Nualart and L.~Quer-Sardanyons.
\newblock Gaussian estimates for the density of the non-linear stochastic heat
  equation in any space dimension.
\newblock {\em Stochastic Process. Appl.}, 122(1):418--447, 2012.

\bibitem{PardouxPeng}
{E}. Pardoux and S.~Peng.
\newblock Adapted solution of a backward stochastic differential equation.
\newblock {\em Systems Control Lett.}, 14(1):55--61, 1990.

\bibitem{PardouxPeng_92}
E.~Pardoux and S.~Peng.
\newblock Backward stochastic differential equations and quasilinear parabolic
  partial differential equations.
\newblock In {\em Stochastic partial differential equations and their
  applications}, volume 176 of {\em Lecture Notes in Control and Inform. Sci.},
  pages 200--217. Springer, 1992.


\bibitem{elk_rouge}
R.~Rouge and N.~El~Karoui.
\newblock Pricing via utility maximization and entropy.
\newblock {\em Math. Finance}, 10(2):259--276, 2000.
\newblock INFORMS Applied Probability Conference (Ulm, 1999).  
\bibitem{Seneta}
E.~Seneta.
\newblock {\em Regularly varying functions}.
\newblock Lecture Notes in Mathematics, Vol. 508. Springer-Verlag, Berlin,
  1976.

\end{thebibliography}
\end{document}